\newtheorem{theorem}{Theorem}
\newtheorem{lemma}[theorem]{Lemma}
\newtheorem{proposition}[theorem]{Proposition}
\theoremstyle{definition}
\theoremstyle{remark}
\newtheorem{remark}[theorem]{Remark}
\numberwithin{equation}{section}
\numberwithin{theorem}{section}
\def\B{{\mathcal B}}
\def\BD{G}
\def\CC{{\mathcal C}}
\def\FF{{\mathcal F}}
\def\G{{\mathcal G}}
\def\H{\mathcal H}
\def\J{\mathbf J}
\def\KK{{\mathcal K}}
\def\L{{\mathcal L}}
\def\N{\mathcal N}
\def\P{{\mathcal P}}
\def\W{{\mathcal W}}
\def\ZZ{{\mathcal Z}}
\def\bfG{\mathbf \Gamma}
\def\bfGr{{\bfG^{{\rm r}}}}
\def\bfGc{{\bfG^{{\rm c}}}}
\def\tbfGr{\tilde\bfG^{{\rm r}}}
\def\tGamma{\tilde\Gamma}
\def\wog{W}
\def\pu{\varnothing}
\def\b{\mathfrak b}
\def\bgamma{\pmb\gamma}
\def\bgammar{{\bgamma^\er}}
\def\bgammac{{{\bgamma^\ec}}}
\def\ec{{\rm c}}
\def\er{{\rm r}}
\def\ttf{{\tt f}}
\def\g{\mathfrak g}
\def\gammar{{\gamma^\er}}
\def\gammac{{\gamma^\ec}}
\def\tgammar{{\tilde\gamma^\er}}
\def\h{\mathfrak h}
\def\i{\mathbf i}
\def\ii{{\hat\imath}}
\def\jj{\hat\jmath}
\def\n{\mathfrak n}
\def\one{\mathbf 1}
\def\pp{{\mathfrak p}}
\def\zero{\mathbf 0}
\def\Ad{\operatorname{Ad}}
\def\End{\operatorname{End}}
\def\Kil{\langle \cdot,\cdot\rangle}
\def\Poi{{\{\cdot,\cdot\}}}
\def\ad{\operatorname{ad}}
\def\adj{\dag}
\def\deg{{\operatorname{deg}}}
\def\diag{\operatorname{diag}}
\def\id{\operatorname{id}}
\def\sign{{\operatorname{sign}}}
\def\:{{:\ }}
\def\ze{\texttt{=}}
\begin{document}

\title[unified approach to exotic cluster structures on simple Lie groups]
{A unified approach to exotic cluster structures on simple Lie groups}

\author{Misha Gekhtman}

\address{Department of Mathematics, University of Notre Dame, Notre Dame,
IN 46556}
\email{mgekhtma@nd.edu}

\author{Michael Shapiro}
\address{Department of Mathematics, Michigan State University, East Lansing,
MI 48823}
\email{mshapiro@math.msu.edu}

\author{Alek Vainshtein}
\address{Department of Mathematics \& Department of Computer Science, University of Haifa, Haifa,
Mount Carmel 31905, Israel}
\email{alek@cs.haifa.ac.il}

\begin{abstract}
We propose a new approach to building log-canonical coordinate charts for any simply-connected simple
Lie group $\G$  and arbitrary Poisson-homogeneous bracket on $\G$ associated with Belavin--Drinfeld data. Given a pair of representatives $r, r'$ from two arbitrary Belavin--Drinfeld classes, 
we build a rational map from  $\G$ with the Poisson structure defined by two appropriately selected representatives from the standard class  
to  $\G$ equipped with the Poisson structure defined by the pair $r, r'$. In the $A_n$ case, we prove that this map is invertible whenever the pair $r, r'$ is
drawn from aperiodic Belavin--Drinfeld data, as defined in~\cite{GSVple}. We further
apply this construction to recover the existence of a regular complete cluster structure
compatible with the Poisson structure associated with the pair $r, r'$ in the aperiodic case.
\end{abstract}

\subjclass[2010]{53D17,13F60}
\keywords{Poisson--Lie group,  cluster algebra, Belavin--Drinfeld triple}

\maketitle

\section{Introduction}
Shortly after cluster algebras were discovered by Fomin and Zelevinsky, important ties emerged between the new theory and Poisson geometry. As was first observed in~\cite{GSV1} and then expounded upon in~\cite{GSVb}, cluster algebras carry natural Poisson structures compatible with cluster transformations. This, in turn, helps in uncovering cluster structures in rings of regular functions on Poisson varieties of interest in Lie theory. In particular, the cluster structure constructed 
in~\cite{BFZ} for (double Bruhat cells of) a simply-connected simple Lie group $\G$ was shown 
in~\cite[Ch.~4.3]{GSVb} to be compatible with the standard Poisson--Lie structure on $\G$. This led to a question, posed in~\cite{GSVM}, of existence of what we called {\em exotic\/} cluster structures on $\G$, i.e. cluster structures non-isomorphic to the standard one and compatible with other 
Poisson--Lie brackets. Although the answer to this question is negative in general---an example to that effect was constructed in~\cite{GSVM} in the case of $SL_2$---we conjectured that the answer is affirmative in the case of Poisson–-Lie structures corresponding to quasi-triangular solutions of  the classical Yang–-Baxter equation classified by Belavin and Drinfeld in~\cite{BD}. Up to an automorphism, each such solution, called an $r$-matrix, is parametrized by discrete data consisting of an isometry between two subsets of positive roots in the root system of the Lie algebra of $\G$ and a continuous parameter that can be described as an element of the tensor square of the Cartan subalgebra that satisfies a system of linear equations governed by the discrete data. The discrete data determines a {\em Belavin--Drinfeld class\/} of $r$-matrices and corresponding Poisson--Lie brackets, and continuous data specifies 
a particular $r$-matrix and bracket within this class. Given two such brackets on $\G$ associated with representatives of two Belavin--Drinfeld classes, one can define a Poisson--Lie group 
$\G\times \G$ equipped with the direct product Poisson structure and then construct a Poisson-homogeneous structure on $\G$ with respect to the action of  $\G\times \G$ by right and left multiplication.

The conjecture of~\cite{GSVM} was modified in subsequent publications. Most importantly, 
in~\cite{GSVple} we restated it to include not just Poisson--Lie brackets but also Poisson-homogeneous brackets of the kind described above. It now claims that for any such bracket associated with an arbitrary pair of Belavin--Drinfeld data there exists a {\em compatible regular complete\/}, possibly generalized, cluster structure in the ring of regular functions on $\G$. In~\cite{GSVple}, 
we proved this conjecture for a large class of Belavin--Drinfeld data in $SL_n$ called 
{\em aperiodic\/} and {\em oriented}. Generalized cluster structures are not needed in this case. They arise when the aperiodicity condition is not satisfied, and a conjectural but supported by examples construction for this situation was outlined in~\cite{GSVpest}.

The most crucial and, as a rule, the most difficult step in constructing a cluster structure compatible with a Poisson bracket is finding an initial coordinate chart consisting of regular functions with particularly simple Poisson brackets between them, so-called {\em log-canonical\/} coordinates. In~\cite{GSVple}, this goal was accomplished in an {\em ad hoc\/} way, with the choice of functions in the chart motivated by their invariance properties with respect to the action of certain subgroups specified by the data, and with Poisson relations between these functions established via lengthy and cumbersome computations. It was also not immediately clear how to adapt these computations to verify our conjecture for other Lie types. In contrast, the standard cluster structure of Berenstein--Fomin--Zelevinsky~\cite{BFZ} is described in purely Lie-theoretic terms using generalized minors and combinatorics of the Weyl group, and its compatibility with the standard Poisson--Lie structure was also verified in Lie type independent way in~\cite{GSVb}.

In this paper, we propose a new approach to building log-canonical coordinate charts for any  
simply-connected simple Lie group $\G$  and arbitrary Belavin--Drinfeld data. 
The main ingredient is a rational Poisson map $h^{r,r'}$ between two copies of $\G$ endowed with two different Poisson-homogeneous structures. One is $\Poi_{r,r'}$ determined by a pair of $r$-matrices from two arbitry Belavin--Drinfeld classes. The other, which we denote here by
$\Poi_{r,r'}^{\rm st}$, corresponds to two $r$-matrices from the standard Belavin--Drinfeld class whose Cartan parts match those of $r, r'$. The rational map $h^{r,r'}$ maps 
$(\G, \Poi_{r,r'}^{\rm st})$ to $(\G, \Poi_{r,r'})$.
Existence of such a map is a new result interesting in its own right, and we will explore its applications in the Poisson--Lie theory, in particular, to integrable systems on Poisson--Lie groups, in our future work. In the context of construction of an initial seed for a cluster structure compatible with $\Poi_{r,r'}$, the map's utility is that by inverting $h^{r,r'}$ and using the inverse to pull back any of the clusters in the standard cluster structure on $\G$, one obtains a log-canonical parametrization for $(\G, \Poi_{r,r'})$.
In particular, when $h^{r,r'}$ has a rational inverse, one can build a regular log-canonical coordinate chart this way and then use it as an initial seed for a cluster structure. We illustrate this point in Section 4, where we use the current approach not only to recover all the results 
of~\cite{GSVple} in a much more conceptual way, but also to drop the orientability condition which was imposed in~\cite{GSVple} and which does not appear to be natural in a general Lie-theoretic framework. The aperiodicity condition is retained, however, since it is precisely the one that guarantees that the map $h^{r,r'}$ has a rational inverse. If this condition is not satisfied, finding the inverse involves considering certain polynomials in one variable whose roots allow to restore frozen variables for the standard cluster structure in terms of elements of 
$(\G, \Poi_{r,r'})$ and whose coefficients serve as coefficients for {\em generalized exchange relations\/} in a compatible generalized cluster structure on $(\G, \Poi_{r,r'})$. We do not discuss these results in the current paper and reserve them for future publications.

The paper is organized as follows. Section~2 contains a brief overview of the necessary background, including Berenstein--Fomin--Zelevinsky factorization parameters in simple Lie groups and 
Poisson--Lie groups and Poisson-homogeneous structures on simple Lie groups arising from the 
Belavin--Drinfeld classification of quasi-triangular $r$-matrices. The main result of the 
paper---construction of the rational Poisson map described above---is  presented in Section~3 
(Theorem~\ref{twosidedpoisson}). Section~4 deals with the case $\G=SL_n$. Here, we show that in the case of  {\em aperiodic\/} Belavin--Drinfeld data (the notion we introduced in~\cite{GSVple}), the Poisson map of Section~3 has a rational inverse. Explicit formulas for the inverse are obtained in terms of minors forming an initial cluster for the {\em standard\/} cluster structure on $\G=SL_n$ 
(Theorem~\ref{bigthroughsmall}).  These formulas allow us to construct a regular complete cluster structure compatible with the Poisson structure associated with a pair of representatives $r, r'$
 from two arbitrary Belavin--Drinfeld classes satisfying the aperiodicity condition 
(Theorems~\ref{quiver},~\ref{regneighbour}, and~\ref{laurent}).  

Our research was supported in part by the NSF research grants DMS \#1702054  and DMS \#2100785 and by the 2022, 2023 Mercator Research Fellowship, Heidelberg University (M.~G.), NSF research grants DMS \#1702115 and  DMS \#2100791 (M.~S.), and ISF grant \#876/20 (A.~V.).  

The authors would like to thank the Isaac Newton Institute for Mathematical Sciences, Cambridge, for support and hospitality during the programme ``Cluster algebras and representation theory"  (Fall 2021) where  the work on this paper was conceived. This programme was supported by EPSRC 
grant no EP/R014604/1. In addition, M.G. thanks the members of the CCBC for their support during the stay in Cambridge. While working further on this project, we benefited from support from several institutions and programs: Mathematical Institute of the University of Heidelberg (M.~S., A.~V., Summer 2022), University of Haifa (M.~G., M.~S., Summer 2022), Michigan State University (A.~V., Fall 2022), University of Notre Dame (A.~V., Spring 2023), Max Planck Institute for Mathematics, Bonn (A.~V., Spring  2023), and Research in Pairs Program at the Mathematisches Forschungsinstitut Oberwolfach 
(M.~G., M.~S., A.~V., Summer 2023), where the project was completed. 
  We are grateful to all these institutions for their hospitality and outstanding working conditions they provided. Special thanks are due to Vladimir Hinich and Anna Melnikov for valuable 
	discussions.

\section{Preliminaries}\label{sec:prelim}
\subsection{Factorizations in Lie groups}\label{sec:factor} 
Let $\G$ be a semsimple complex Lie group of rank $r$, $\g$ be its Lie algebra with the Cartan decomposition $\g=\n_+\oplus\h\oplus\n_-$, $e_i, h_i, f_i$, $i\in[1,r]$ be the standard generators of $\g$. We denote by $\b_+=\n_+\oplus\h$ the Borel subalgebra of $\g$ and by $\b_-=\n_-\oplus\h$ the opposite Borel subalgebra. The corresponding subgroups in $\G$ are denoted $\N_+$, $\H$, $\N_-$, $\B_+$, and $\B_-$. 

Let $\W$ be the Weyl group of $\G$; it is generated by simple reflections $s_1,\dots,s_r$. A reduced word for $w\in\W$ is a sequence of indices $\i=(i_1,\dots,i_m)$ of the shortest possible length such that $w=s_{i_1}\cdots s_{i_m}$. Following~\cite{BZ}, for any reduced word $\i$ for the longest element $w_0\in\W$ we can write a generic element $N\in\N_+$ in a unique way as a product $N=x_{i_1}(t_1)\cdots x_{i_m}(t_m)$ where $t_i$ are nonzero complex numbers and $x_i(t)=\exp(te_i)$. A similar factorization with $x_i(t)$ replaced by $\exp(tf_i)$ holds for a generic element in $\N_-$.

Let $J\subset[1,r]$, $\W_J$ be the subgroup of $\W$ generated by reflections $s_j$, $j\in J$, 
$\W^J=\{w\in\W\: l(ws_j)>l(w)\ \text{for any $j\in J$}\}$ be the quotient. 
By~\cite[Prop.~2.4.4]{BB}, every $w\in\W$ has a unique factorization $w=w^J\cdot w_J$ such that $w^J\in\W^J$, $w_J\in \W_J$, and
$l(w)=l(w^J)+l(w_J)$. We apply this result to $w_0$ and rewrite the reduced word $\i$ as the concatenation of the reduced words $\i^J$ and $\i_J$. Consequently, this yields a factorization of an arbitrary element $N\in\N+$ as
$N= N' N''$ where $N''$ belongs to the unipotent subgroup $\N_+^J$ that corresponds to $J$. Similarly, 
an element $\tilde N\in \N_-$ can be factored as
$\tilde N=\tilde N''\tilde N'$ with $\tilde N''\in \N^J_-$.

\subsection{Poisson--Lie groups}
A reductive complex Lie group $\G$ equipped with a Poisson bracket $\Poi$ is called a {\em Poisson--Lie group\/}
if the multiplication map $\G\times \G \ni (X,Y) \mapsto XY \in \G$
is Poisson. Perhaps, the most important class of Poisson--Lie groups
is the one associated with quasitriangular Lie bialgebras defined in terms of  {\em classical R-matrices\/} 
(see, e.~g., \cite[Ch.~1]{CP}, \cite{r-sts} and \cite{Ya} for a detailed exposition of these structures).

Let $\g$ be the Lie algebra corresponding to $\G$, $\Kil$ be an invariant nondegenerate form on $\g$,
 and let $\mathfrak{t}\in \g\otimes\g$ be the corresponding Casimir element.
For an arbitrary element $r=\sum_i a_i\otimes b_i\in\g\otimes\g$ denote
\[
[[r,r]]=\sum_{i,j} [a_i,a_j]\otimes b_i\otimes b_j+\sum_{i,j} a_i\otimes [b_i,a_j]\otimes b_j+
\sum_{i,j} a_i\otimes a_j\otimes [ b_i,b_j]
\]
and $r^{21}=\sum_i b_i\otimes a_i$.
A {\em classical R-matrix} is an element $r\in \g\otimes\g$ that satisfies
{\em the classical Yang-Baxter equation (CYBE)\/} $[[r, r]] =0$
together with the condition $r + r^{21} = \mathfrak{t}$.
The Poisson--Lie bracket on $\G$ that corresponds to $r$ can be written as
\begin{equation}\label{sklyabra}
\begin{aligned}
\{f_1,f_2\}_r &= \langle R_+(\nabla^L f_1), \nabla^L f_2 \rangle - \langle R_+(\nabla^R f_1), \nabla^R f_2 \rangle\\
&= \langle R_-(\nabla^L f_1), \nabla^L f_2 \rangle - \langle R_-(\nabla^R f_1), \nabla^R f_2 \rangle,
\end{aligned}
\end{equation} 
where $R_+,R_- \in \End \g$ are given by $\langle R_+ \eta, \zeta\rangle = \langle r, \eta\otimes\zeta \rangle$, 
$-\langle R_- \zeta, \eta\rangle = \langle r, \eta\otimes\zeta \rangle$ for any $\eta,\zeta\in \g$ and  
$\nabla^L$, $\nabla^R$ are the right and the left gradients of functions on $\G$ with respect to $\Kil$ 
defined by
\begin{equation*}
\left\langle \nabla^R f(X),\xi\right\rangle=\left.\frac d{dt}\right|_{t=0}f(e^{t\xi}X),  \quad
\left\langle \nabla^L f(X),\xi\right\rangle=\left.\frac d{dt}\right|_{t=0}f(Xe^{t\xi})
\end{equation*}
for any $\xi\in\g$, $X\in\G$.

The classification of classical R-matrices for simple complex Lie groups was given by Belavin and Drinfeld in \cite{BD}.
Let $\G$ be a simple complex Lie group, $\Phi$ be the root system associated with its Lie algebra $\g$, $\Phi^+$ be the set of positive roots, and $\Pi\subset \Phi^+$ be the set of positive simple roots. 
A {\em Belavin--Drinfeld triple} $\bfG=(\Gamma_1,\Gamma_2, \gamma)$ (in what follows, a {\em BD triple\/})
consists of two subsets $\Gamma_1,\Gamma_2$ of $\Pi$ and an isometry $\gamma\:\Gamma_1\to\Gamma_2$ nilpotent in the 
following sense: for every $\alpha \in \Gamma_1$ there exists $m\in\mathbb{N}$ such that $\gamma^j(\alpha)\in \Gamma_1$ 
for $j\in [0,m-1]$, but $\gamma^m(\alpha)\notin \Gamma_1$.

 The isometry $\gamma$ yields an isomorphism, also denoted by $\gamma$, between the Lie subalgebras $\g^{\Gamma_1}$ 
and $\g^{\Gamma_2}$ that correspond to $\Gamma_1$ and $\Gamma_2$. It is uniquely defined by the property 
$\gamma e_\alpha = e_{\gamma(\alpha)}$ for $\alpha\in \Gamma_1$, where $e_\alpha$ is the Chevalley generator corresponding to 
the root $\alpha$. The isomorphism $\gamma^*\: \g^{\Gamma_2} \to \g^{\Gamma_1}$ is defined as the adjoint to $\gamma$ with respect to the form $\Kil$. 
It is given by $\gamma^* e_{\gamma(\alpha)}=e_{\alpha}$ for $\gamma(\alpha)\in \Gamma_2$.
 Both $\gamma$ and $\gamma^*$ can be extended to maps of $\g$ to itself by applying first the orthogonal
projection on $\g^{\Gamma_1}$ (respectively, on $\g^{\Gamma_2}$) with respect to $\Kil$; clearly, the extended
maps remain adjoint to each other. Note that the restrictions of $\gamma$ and $\gamma^*$ to the positive and the negative nilpotent subalgebras $\n_+$ and $\n_-$ of $\g$ are Lie algebra homomorphisms 
of $\n_+$ and $\n_-$  to themselves, and $\gamma(e_{\pm\alpha})=0$
for all $\alpha\in\Pi\setminus\Gamma_1$. Further, if $\g$ is simply connected $\gamma$ can be lifted to $\bgamma=\exp\gamma$;  note that $\bgamma$ is  defined only on $\N_+$ and $\N_-$ and is a group homomorphism. 

 By the classification theorem, each classical R-matrix is equivalent to an R-matrix from a {\it Belavin--Drinfeld class\/} defined by a BD triple $\bfG$. The operator $R^\bfG_+$ corresponding
to a member of this class is given by
\begin{equation*}
\label{Rplusgamma}
R^\bfG_+=R_0^\bfG+\frac1{1-\gamma}\pi_{>}-\frac{\gamma^*}{1-\gamma^*}\pi_{<},
\end{equation*}
where $\pi_{>}$, $\pi_{<}$ are projections of  
$\g$ onto $\n_+$ and $\n_-$ and $R_0^\bfG$ acts on $\h$ (see~\cite{GSVple} for more details).

In what follows we will use a Poisson bracket on $\G$ that is a generalization of the bracket \eqref{sklyabra}.
Let $r, r'$ be two classical R-matrices, and $R_+, R'_+$ be the corresponding operators, then we write
\begin{equation}
\{f_1,f_2\}_{r,r'} = \langle R'_+(\nabla^L f_1), \nabla^L f_2 \rangle -\langle R_+(\nabla^R f_1), \nabla^R f_2 \rangle.
\label{sklyabragen}
\end{equation} 
By \cite[Proposition 12.11]{r-sts}, the above expression defines a Poisson bracket, which is not Poisson--Lie unless $r=r'$,
in which case $\{f_1,f_2\}_{r,r}$ evidently coincides with $\{f_1,f_2\}_{r}$. 
The bracket \eqref{sklyabragen} defines a Poisson homogeneous structure on $\G$ with respect to the left and right multiplication by Poisson--Lie groups $(\G,\Poi_{r'})$ and
$(\G,\Poi_{r})$, respectively.

\section{Poisson map}

\subsection{The main construction}\label{mainconstruction} 
We will write $\G_{r,r'}$ for the Poisson manifold 
$(\G,\allowbreak\Poi_{r,r'})$. Fix a pair of R-matrices 
$r^\bfGr$, $r^\bfGc$ from the BD classes defined by $\bfGr$ and $\bfGc$, respectively. Additionally, fix two R-matrices $r_\bfGr^\pu$, $r_\bfGc^\pu$ from the standard BD class 
(corresponding to the empty triple $\Gamma$) so that $R_0$ for $r^\bfGr$ and $r_\bfGr^\pu$ coincide, and $R_0$ for $r^\bfGc$ and $r_\bfGc^\pu$ coincide. Our aim is to build a rational Poisson map 
$h: \G_{r_\bfGr^\pu,r_\bfGc^\pu}\to \G_{r^\bfGr,r^\bfGc}$.

Take $U\in\G$ and consider its Gauss decomposition $U=U_-U_0U_+$ (so, in fact, $U$ lies in an open dense subset in $\G$). We further factor $U_-=V^\er\tilde U_-$ with 
$V^\er\in\N_-^{\Gamma_1^\er}$ 
and $U_+=\tilde U_+V^\ec$ with $V^\ec\in\N_+^{\Gamma_2^\ec}$, as explained in 
Section~\ref{sec:factor}.
Next, choose $\wog^\er\in\G^{\Gamma_1^\er}$  and $\wog^\ec\in\G^{\Gamma_2^\ec}$ such that 
$\wog^\er \B_-^{\Gamma_1^\er}(\wog^\er)^{-1}=\B_+^{\Gamma_1^\er}$ and 
$\wog^\ec \B_+^{\Gamma_2^\ec}(\wog^\ec)^{-1}=\B_-^{\Gamma_2^\ec}$. The elements $\wog^\er$ and 
$\wog^\ec$ may be chosen, for example, as representatives of the longest elements of the Weyl groups of $\G^{\Gamma_1^\er}$ and $\G^{\Gamma_2^\ec}$, respectively, via the procedure described in~\cite[Sect.~1.4]{FZ}. Write
\[
V^\er\wog^\er=(V^\er\wog^\er)_+(V^\er\wog^\er)_{0,-}, \qquad \wog^\ec V^\ec=(\wog^\ec V^\ec)_{+,0}(\wog^\ec V^\ec)_-
\]
and set $\bar V^\er=(V^\er\wog^\er)_+\in\N_+^{\Gamma_1^\er}$, $\bar V^\ec=(\wog^\ec V^\ec)_-\in\N_-^{\Gamma_2^\ec}$. 

Define 
\[
\begin{aligned}
H^\er&=H^\er(U)=\cdots (\bgammar)^3(\bar V^\er)(\bgammar)^2(\bar V^\er)\bgammar
(\bar V^\er)\in\N_+^{\Gamma_2^\er},\\
H^\ec&=H^\ec(U)=\cdots ((\bgammac)^*)^3(\bar V^\ec)((\bgammac)^*)^2(\bar V^\ec)
(\bgammac)^*(\bar V^\ec)\in\N_-^{\Gamma_1^\ec}
\end{aligned}
\]
(the products above are finite due to the nilpotency of $\gammar$ and $\gammac$). 

\begin{theorem}
\label{twosidedpoisson}
The map $h: \G_{r_\bfGr^\pu,r_\bfGc^\pu}\to \G_{r^\bfGr,r^\bfGc}$ defined by 
$h(U)=H^\er(U) U H^\ec(U)$ is a rational Poisson map.
\end{theorem}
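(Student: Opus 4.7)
\emph{Rationality.} Every step in the construction of $h$---the Gauss decomposition $U=U_-U_0U_+$, the factorizations $U_-=V^\er\tilde U_-$ and $U_+=\tilde U_+V^\ec$ of Section~\ref{sec:factor}, the passages $V^\er\mapsto \bar V^\er$ and $V^\ec\mapsto\bar V^\ec$ via the fixed elements $\wog^\er,\wog^\ec$, and the iterated products defining $H^\er$ and $H^\ec$ (finite by nilpotency of $\gammar$ and $\gammac$)---is rational on a Zariski-open dense subset of $\G$. Hence $h$ is rational, and the real content is the Poisson property.

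\emph{Key recursion and pullback.} Peeling off the outermost $\bgammar$ in the product formula for $H^\er$ yields
\[
H^\er(U)=\bgammar\bigl(H^\er(U)\cdot\bar V^\er(U)\bigr),
\]
and symmetrically $H^\ec(U)=(\bgammac)^*\bigl(H^\ec(U)\cdot\bar V^\ec(U)\bigr)$. These are group-level versions of the operator identities $(1-\gammar)^{-1}-1=\gammar(1-\gammar)^{-1}$ and $(1-\gammac^*)^{-1}-1=\gammac^*(1-\gammac^*)^{-1}$; the right-hand sides are precisely the discrepancies between the Belavin--Drinfeld operators $R^\bfGr_+,R^\bfGc_+$ and the standard operator $R^\pu_+$ (the Cartan parts $R_0$ match by our choice of $r^\pu_\bfGr,r^\pu_\bfGc$). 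For a function $f$ on $\G$, the chain rule applied to $f\circ h$ with $h(U)=H^\er(U)UH^\ec(U)$ expresses $\nabla^L(f\circ h)(U)$ and $\nabla^R(f\circ h)(U)$ in terms of $\nabla^L f|_{h(U)}$, $\nabla^R f|_{h(U)}$ and the differentials of $U\mapsto H^\er(U)$, $U\mapsto H^\ec(U)$. The ``diagonal'' part yields adjoint conjugations by $H^\er(U)$ and $H^\ec(U)$; the ``off-diagonal'' part comes from differentiating the $U$-dependence of $H^\er,H^\ec$. Substituting into \eqref{sklyabragen} with $(r,r')=(r^\pu_\bfGr,r^\pu_\bfGc)$ and telescoping the off-diagonal contributions via the recursion above, the standard operator $R^\pu_+$ is converted into $R^\bfGr_+$ on the $\nabla^R$ side and into $R^\bfGc_+$ on the $\nabla^L$ side, yielding $\{f_1,f_2\}_{r^\bfGr,r^\bfGc}(h(U))$ as required.

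\emph{Main obstacle.} The principal difficulty lies in computing the differentials of the maps $U\mapsto H^\er(U)$ and $U\mapsto H^\ec(U)$ and reorganizing the resulting terms. A perturbation of $U$ (by left or right multiplication) must be propagated through the Gauss decomposition, the Bruhat-type factorizations into $V^\er, V^\ec$, the passage to $\bar V^\er,\bar V^\ec$, and finally through the iterated $\bgamma$-products, with careful bookkeeping at each step of which root subspaces the tangent vectors occupy. The recursion above is the organizing device that collapses the many terms so produced, via telescoping, into the operators $\gammar(1-\gammar)^{-1}\pi_>$ and $-\gammac^*(1-\gammac^*)^{-1}\pi_<$ characteristic of the Belavin--Drinfeld brackets; arranging this telescoping in Lie-type--independent fashion is the core of the argument.
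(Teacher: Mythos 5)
Your rationality argument is fine, and the recursion you isolate, $H^\er(U)=\bgammar\bigl(H^\er(U)\bar V^\er(U)\bigr)$, is indeed the identity the paper exploits (in the equivalent form $\bgammar(H^\er)=H^\er\bgammar(\bar V^\er)^{-1}$, which is the starting point for inverting $h^\er$). But beyond that the proposal is an outline rather than a proof: the sentence ``telescoping the off-diagonal contributions via the recursion above, the standard operator $R^\pu_+$ is converted into $R^\bfGr_+$\dots'' asserts precisely the content of the theorem, and your own closing paragraph concedes that the computation of the differentials of $U\mapsto H^\er(U)$, $U\mapsto H^\ec(U)$ and the reorganization of terms---which is where all the work lies---has not been carried out. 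In the paper this occupies several pages: one passes to the inverse map on the seaweed intersection $\ZZ=\P_+^\bfG\cap\P_-^\bfG$, obtains the closed formula $U=\bgamma(\bar Z_+^{-1})Z$, computes $\nabla^{L}\hat f=\alpha_f-\xi_f$ and $\nabla^{R}\hat f=\eta_f-\zeta_f$ explicitly, and cancels terms using the adjointness identity $\langle\alpha,\gamma(\beta_>)\rangle=\langle\gamma^*(\alpha_<),\beta\rangle$ and Lemma~\ref{adgamma}. None of these steps is automatic.

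Two further specific gaps. First, you treat the two-sided map $h$ in one stroke, so you must control the cross terms arising from the simultaneous $U$-dependence of $H^\er$ and $H^\ec$; the paper avoids this entirely by proving the one-sided statements (Theorem~\ref{onesidedpoisson}) for $h^\er$ and $h^\ec$ separately, restricting to the Borel subgroups $\B_\mp$ (which are Poisson submanifolds), and then invoking Proposition~\ref{poissonmultiplication} that multiplication $\G_{r',r}\times\G_{r,r''}\to\G_{r',r''}$ is Poisson. Some such device is needed; your sketch does not supply one. Second, the diagonal (Cartan) contributions do not cancel merely because the $R_0$ parts of $r^\bfGr$ and $r^\pu_\bfGr$ coincide: the verification of~\eqref{bradiag} relies on the Belavin--Drinfeld constraints on $R_0^\bfG$, namely $R_0^\bfG(\gamma^*-1)=\gamma^*$ and $(R_0^\bfG)^*(1-\gamma^*)=1$ on $\h^{\Gamma_2}$ (equations~\eqref{form1} and~\eqref{form2}), which your argument never invokes.
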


\begin{proof} 
Fix an arbitrary $r^\pu$ from the standard BD class and define a map
$h^\er: \G_{r^\pu_\bfGr,r^\pu}\to \G_{r^\bfGr,r^\pu}$ via $h^\er(U)=H^\er(U) U$. 
For the same $r^\pu$ as above define a map
$h^\ec: \G_{r^\pu,r^\pu_\bfGc}\to \G_{r^\pu,r^\bfGc}$ via $h^\ec(U)=UH^\ec(U)$. 

\begin{theorem}
\label{onesidedpoisson}
The maps $h^\er$ and $h^\ec$ are rational Poisson maps.
\end{theorem}

The proof of Theorem~\ref{onesidedpoisson} is given in the next subsection.

Since the Borel subgroup $\B_+\subset\G$ and the opposite Borel subgroup $\B_-\subset\G$ are Poisson submanifolds, the restrictions of $h^\ec$ to $\B_+$ and of $h^\er$ to $\B_-$ are Poisson maps as well; their images $h^\ec(\B_+)$ and $h^\er(\B_-)$ are called  {\it twisted Borels\/}.

Note that the following diagram is commutative:
\begin{equation}
\label{maincd}
\begin{tikzcd}
(\B_-)_{r^\pu_\bfGr,r^\pu}\times(\B_+)_{r^\pu,r^\pu_\bfGc}\arrow[r,"{(h^\er,h^\ec)}"]\arrow[d,"g"]
& h^\er(\B_-)_{r^\bfGr,r^\pu}\times h^\ec(\B_+)_{r^\pu,r^\bfGc} \arrow[d,"g"]\\
\G_{r_\bfGr^\pu,r_\bfGc^\pu}\arrow[r,"h"] & \G_{r^\bfGr,r^\bfGc}
\end{tikzcd}
\end{equation}
where $g:(U^-,U^+)\mapsto U^-U^+=U$ is the multiplication map. Indeed, since $h^\er(U^-)=H^\er(U^-) U^-$ and 
$h^\ec(U^+)=U^+H^\ec(U^+)$, we get 
\[
g\circ(h^\er,h^\ec)(U^-,U^+)=H^\er(U^-) U^-U^+H^\ec(U^+)=H^\er(U^-) UH^\ec(U^+).
\]
Recall that $H^\er(U)$ depends only on the first term of the Gauss decomposition, and $H^\ec(U)$ only on its last term,
hence $H^\er(U^-)=H^\er(U)$, $H^\ec(U^+)=H^\ec(U)$, and 
\begin{equation}
\label{XthruU}
h\circ g(U^-,U^+)=h(U)=H^\er(U) UH^\ec(U).
\end{equation}

\begin{proposition}
\label{poissonmultiplication}
For any three R-matrices $r$, $r'$, $r''$, the multiplication map 
$g: \G_{r',r}\times \G_{r,r''}\to \G_{r',r''}$ is Poisson. 
\end{proposition} 

\begin{proof} Let $\lambda_X$ denote the left translation by $X$ and $\rho_Y$ denote the right translation by $Y$. We have to check the identity
\begin{equation}\label{multicheck}
\{\rho_Yf^1,\rho_Yf^2\}_{r',r}(X)+\{\lambda_Xf^1,\lambda_Xf^2\}_{r,r''}(Y)=
\{f^1,f^2\}_{r',r''}(Z)
\end{equation}
for $Z=XY$. Note that $\nabla_X(\rho_Yf)(X)=Y\nabla_Zf(Z)$ and 
$\nabla_Y(\lambda_Xf)(Y)=\nabla_Zf(Z)X$. Consequently, 
\[
\begin{aligned}
\nabla^R(\rho_Yf)(X)&=X\nabla_X(\rho_Yf)(X)=Z\nabla_Zf(Z)=\nabla^Rf(Z),\\
\nabla^L(\rho_Yf)(X)&=\nabla_X(\rho_Yf)(X)X=\Ad_Y(\nabla_Zf(Z)Z)=\Ad_Y\nabla^Lf(Z),\\
\nabla^R(\lambda_Xf)(Y)&=Y\nabla_Y(\lambda_Xf)(Y)=\Ad_Y(\nabla_Zf(Z)Z)=\Ad_Y\nabla^Lf(Z),\\
\nabla^L(\lambda_Xf)(Y)&=\nabla_Y(\lambda_Xf)(Y)Y=\nabla_Zf(Z)Z=\nabla^Lf(Z).
\end{aligned}
\]
We thus have
\begin{multline*}
\{\rho_Yf^1,\rho_Yf^2\}_{r',r}(X)=\langle R_+\nabla^L(\rho_Yf^1),\nabla^L(\rho_Yf^2)\rangle-
\langle R'_+\nabla^R(\rho_Yf^1),\nabla^R(\rho_Yf^2)\rangle\\
=\langle R_+\Ad_Y\nabla^Lf^1,\Ad_Y\nabla^Lf^2\rangle-\langle R'_+\nabla^Rf^1, \nabla^Rf^2\rangle
\end{multline*}
and
\begin{multline*}
\{\lambda_Xf^1,\lambda_Xf^2\}_{r,r''}(Y)=\langle R''_+\nabla^L(\lambda_Xf^1),
\nabla^L(\lambda_Xf^2)\rangle-
\langle R_+\nabla^R(\lambda_Xf^1),\nabla^R(\lambda_Xf^2)\rangle\\
=\langle R''_+\nabla^Lf^1, \nabla^Lf^2\rangle-\langle R_+\Ad_Y\nabla^Lf^1,\Ad_Y\nabla^Lf^2\rangle,
\end{multline*}
which proves~\eqref{multicheck}.
\end{proof}

A particular case of this claim for $r=r'$ or $r=r''$ is given in
Proposition 5.2.18  of~\cite{r-sts}. Note: in their notation, our $\Poi_{r,r'}$ is $\Poi_{r',-r}$.

Since $h^\er$ and $h^\ec$ are Poisson and $g$ is Poisson and surjective we get that $h$ is Poisson.
\end{proof}

\subsection{Proof of Theorem \ref{onesidedpoisson}}\label{sectiononesided} 
We only present the proof for $h^\er$, since the proof for 
$h^\ec$ is similar.  To make the formulas more readable, in this Section we use the following  notation:  
$\bfG=\bfGr$, $\Gamma_i=\Gamma^\er_i$, $\gamma=\gammar$, $\G_\pm=\G^\er_\pm$, $V=V^\er$, 
$\wog=\wog^\er$, $\bar V=\bar V^\er$, $H=H^\er$.

Our first goal is to  invert $h^\er$. 
We start with finding $\bar V$ via $H$. Since $H\in\N_+^{\Gamma_2}$ and $\bgamma$ is a homomorphism we have
\[
\bgamma(H)=\cdots \bgamma^4(\bar V)\bgamma^3(\bar V)\bgamma^2(\bar V)=H\bgamma(\bar V)^{-1},
\]
and so $\bgamma(\bar V)=\bgamma(H^{-1})H$, which gives
\[
\bgamma^*\bgamma(\bar V)=\bgamma^*\bgamma(H^{-1})\bgamma^*(H).
\]
Recall that $\bgamma^*\bgamma$ acts on $\N_+$ as the projection to $\N_+^{\Gamma_1}$, so 
$\bgamma^*\bgamma(\bar V)=\bar V$ and hence
\begin{equation}\label{barVtH}
\bar V=\bgamma^*\bgamma(H^{-1})\bgamma^*(H). 
\end{equation}

Next, we find $V$ via $H$. Recall that $\bar V=(V\wog)_+$, hence 
$\bar V(V\wog)_{0,-}=V\wog$, hence
\[
\bar V\wog^{-1}\left(\wog(V\wog)_{0,-}\wog^{-1}\right)=V.
\]
Applying the Gauss decomposition once again we get
\begin{equation}\label{trick}
(\bar V\wog^{-1})_-(\bar V\wog^{-1})_{0,+}\left(\wog(V\wog)_{0,-}\wog^{-1}\right)=V.
\end{equation}
The last bracket on the left belongs to $\B_+$, while $V\in\N_-$, so the second and the third bracket cancel each other and we get $V=(\bar V\wog^{-1})_-$, which together with~\eqref{barVtH} gives
\[
V=(\bgamma^*\bgamma(H^{-1})\bgamma^*(H)\wog^{-1})_-.
\]

Consider two parabolic subalgebras of $\g$ determined by $\bfG$: $\pp_+^\bfG$ contains $\b_+$ and all the negative root spaces in $\g^{\Gamma_1}$, while $\pp_-^\bfG$ contains $\b_-$ and all the positive root spaces in $\g^{\Gamma_2}$. Denote by $\P_\pm^\bfG$ the corresponding parabolic subgroups of $\G$, and let $\ZZ=\P^\bfG_+\cap\P^\bfG_-$.
Note that the corresponding subalgebra $\pp_+^\bfG\cap \pp_-^\bfG$ is a seeweed subalgebra introduced for type A in \cite{DK}. There is a commutative diagram
\[
\begin{tikzcd}
\ZZ\times\tilde\G^{\sigma_1,\sigma_2}\arrow[r,"{(h^\er,\id)}"] \arrow[d,"g"] & 
\ZZ\times\tilde\G^{\sigma_1,\sigma_2} \arrow[d,"g"]\\
\G \arrow[r,"{h^\er}"]&\G
\end{tikzcd}
\]
where $\tilde\G^{\sigma_1,\sigma_2}$ is the reduced double Bruhat cell corresponding to 
$\sigma_1=w_0^{\Gamma_1}w_0$, $\sigma_2=w_0^{\Gamma_2}w_0$ with $w_0$, $w_0^{\Gamma_1}$
and $w_0^{\Gamma_2}$ being the longest elements of the corresponding Weyl groups, 
and $g$ is the product similarly to~\eqref{maincd}. So, to invert $h^\er$ on $\G$ it is enough to invert it on $\ZZ$ and to invert the vertical arrow on the 
right. Note that reduced double Bruhat cells are not Poisson submanifolds.
For this reason, to prove that $(h^\er)^{-1}$ is Poisson on the
whole $\G$ provided it is Poisson on $\ZZ$ we use, similarly to~\eqref{maincd}, the commutative diagram
\[
\begin{tikzcd}[column sep=large]
\ZZ_{r^\bfG,r^\pu}\times\G^{\sigma_1,\sigma_2}_{r^\pu,r^\pu}
\arrow[r,"{((h^\er)^{-1},\id)}"] 
\arrow[d,"g"] & 
\ZZ_{r^\pu_\bfG,r^\pu}\times\G^{\sigma_1,\sigma_2}_{r^\pu,r^\pu} \arrow[d,"g"]\\
\G_{r^\bfG,r^\pu} \arrow[r,"{(h^\er)^{-1}}"]& \G_{r^\pu_\bfG,r^\pu}
\end{tikzcd}
\]
where $g$ on both sides is Poisson by Proposition \ref{poissonmultiplication}.

To invert $h^\er$ on $\ZZ$ note that for $U\in\ZZ$ one has $\tilde U_-=\one$, and hence
\[
Z=h^\er(U)=HVU_{0,+}=(HV)_-(HV)_{0,+}U_{0,+}=Z_-Z_{0,+},
\]
(one more open condition), so that
\[
Z_-=(HV)_-=(H(\bgamma^*\bgamma(H^{-1})\bgamma^*(H)\wog^{-1})_-)_-.
\]
Clearly, $(AB_-)_-=(AB)_-$, since $AB=AB_-B_{0,+}=(AB_-)_-(AB_-)_{0,+}B_{0,+}$, so
\[
Z_-=(H\bgamma^*\bgamma(H^{-1})\bgamma^*(H)\wog^{-1})_-.
\]
Recall that $H\in\N_+^{\Gamma_2}$, hence $\bgamma^*(H)\wog^{-1}\in\G^{\Gamma_1}$. On the other hand, the projection of $H\bgamma^*\bgamma(H^{-1})$ to $\N_+^{\Gamma_1}$ is given by
\[
\bgamma^*\bgamma(H\bgamma^*\bgamma(H^{-1}))=\bgamma^*\bgamma(H)\bgamma^*\bgamma(H^{-1})=\one
\]
since $\bgamma^*\bgamma$ is an idempotent, and so $Z_-=(\bgamma^*(H)\wog^{-1})_-$. Using the same trick as in~\eqref{trick} in the opposite direction we get $\bgamma^*(H)=(Z_-\wog)_+=\bar Z_+$ for 
$\bar Z=Z_-\wog$. Note that $\bar Z_+\in\N_+^{\Gamma_1}$. Since 
$\bgamma\bgamma^*$ acts on $\N_+$ as the projection to $\N_+^{\Gamma_2}$, we get 
$\bgamma\bgamma^*(H)=H=\bgamma(\bar Z_+)$, and finally, 
$U=(h^\er)^{-1}(Z)=H^{-1}Z=\bgamma(\bar Z_+^{-1})Z$. Thus, we have inverted $h^\er$ on $\ZZ$.

 To proceed further we need to find the variation $\delta U$.
 Recall that $Z=Z_-Z_{0,+}$, hence $T_Z\G=(T_{Z_-}\N_-) Z_{0,+}\oplus Z_- (T_{Z_{0,+}}\B_+)$,
or, in other words, $\delta Z=\delta Z_-Z_{0,+}+Z_-\delta Z_{0,+}$. 
 Here and in what follows we admit a common abuse of notation and write $gv$ instead of $(\lambda_g)_*(v)$ for the left translation of a tangent vector $v$ by a group element $g$ and $vg$ instead of $(\rho_g)_*(v)$ for the right translation.
Note that $Z_-^{-1}\delta Z_-\in\n_-$ since the left translation by $Z_-^{-1}$ identifies $\n_-=T_1\N_-$ with $T_{Z_-}\N_-$. Similarly, 
$Z^{-1}\delta Z\in\g$ and $Z_{0,+}^{-1}\delta Z_{0,+}\in\b_+$. Therefore,
\[
\Ad_{Z_{0,+}}Z^{-1}\delta Z=Z_-^{-1}\delta Z_-+\Ad_{Z_{0,+}}Z_{0,+}^{-1}\delta Z_{0,+}.
\]
The first term on the right belongs to $\n_-$ and the second to $\b_+$, hence we get 
$(\Ad_{Z_{0,+}}Z^{-1}\delta Z)_<=Z_-^{-1}\delta Z_-$; here and in what follows we write
$A_<$ for $\pi_<(A)$, etc.

 Similarly, $\bar Z=\bar Z_+\bar Z_{0,-}$, 
and hence 
\[
\Ad_{\bar Z_{0,-}}\bar Z^{-1}\delta\bar Z=\bar Z_+^{-1}\delta\bar Z_+
+\Ad_{\bar Z_{0,-}}\bar Z_{0,-}^{-1}\delta\bar Z_{0,-}.
\]
Here the first term on the right belongs to $\n_+$ and the second to $\b_-$, hence
\begin{multline*}
\bar Z_+^{-1}\delta\bar Z_+=(\Ad_{\bar Z_{0,-}}\bar Z^{-1}\delta\bar Z)_>
=(\Ad_{\bar Z_{0,-}\wog^{-1}}Z_-^{-1}\delta Z_-)_>\\
=\left(\Ad_{\tilde Z}(\Ad_{Z_{0,+}}Z^{-1}\delta Z)_<\right)_>
\end{multline*}
with ${\tilde Z}=\bar Z_{0,-}\wog^{-1}$, since $\bar Z= Z_-\wog$ and $\bar Z^{-1}\delta\bar Z=
\Ad_{\wog^{-1}}Z_-^{-1}\delta Z_-$.

Finally, $U=\bgamma(\bar Z_+^{-1})Z$, so
\begin{multline*}
U^{-1}\delta U=U^{-1}\delta(\bgamma(\bar Z_+^{-1}))Z+U^{-1}\bgamma(\bar Z_+^{-1})\delta Z\\
=-U^{-1}\bgamma(\bar Z_+^{-1})\delta(\bgamma(\bar Z_+))\bgamma(\bar Z_+^{-1})Z+Z^{-1}\delta Z
=-\Ad_{U^{-1}}\left(\gamma(\bar Z_+^{-1}\delta\bar Z_+ )\right)+Z^{-1}\delta Z\\
= -\Ad_{U^{-1}}\left(\gamma\left(\left(\Ad_{{\tilde Z}}(\Ad_{Z_{0,+}}Z^{-1}\delta Z)_<\right)_>\right)\right)
+Z^{-1}\delta Z.
\end{multline*}

Let us compute the gradients of $\hat f(Z)=f\circ (h^\er)^{-1}(Z)$.
We start with
\[
\begin{aligned}
\langle&\nabla f(U)U,U^{-1}\delta U\rangle\\
&=\langle\nabla f(U)U,Z^{-1}\delta Z\rangle-
\left\langle \nabla f(U)U,\Ad_{U^{-1}}\left(\gamma\left(\left(\Ad_{{\tilde Z}}(\Ad_{Z_{0,+}}Z^{-1}\delta Z)_<
\right)_>\right)\right)\right\rangle\\
&=\langle\nabla f(U)U,Z^{-1}\delta Z\rangle-\left\langle\Ad_U (\nabla f(U)U),
\gamma\left(\left(\Ad_{{\tilde Z}}(\Ad_{Z_{0,+}}Z^{-1}\delta Z)_<
\right)_>\right)\right\rangle.
\end{aligned}
\]
Note that  $\langle \alpha, \gamma(\beta_>)\rangle=\langle \gamma^*(\alpha_{<}),\beta\rangle$ 
for any $\alpha,\beta \in\g$, since
\begin{multline*}
\langle \alpha, \gamma(\beta_>)\rangle=\langle \alpha_<,\gamma(\beta_>)\rangle+\langle\alpha_{\ge}, \gamma(\beta_>)\rangle=
\langle \alpha_<, \gamma(\beta_>)\rangle\\
=\langle \gamma^*(\alpha_<), \beta_>\rangle=\langle \gamma^*(\alpha_<), \beta_>\rangle+\langle \gamma^*(\alpha_<), \beta_{\le}\rangle
=\langle \gamma^*(\alpha_<), \beta\rangle,
\end{multline*}
so that
\begin{multline*}
\left\langle\Ad_U (\nabla f(U)U),\gamma\left(\left(\Ad_{{\tilde Z}}(\Ad_{Z_{0,+}}Z^{-1}\delta Z)_<
\right)_>\right)\right\rangle\\
=\left\langle \gamma^*\left(\left(\Ad_U(\nabla f(U)U)\right)_{<}\right),
\Ad_{{\tilde Z}}\left(\Ad_{Z_{0,+}}(Z^{-1}\delta Z)\right)_{<}\right\rangle\\
=\left\langle\Ad_{{\tilde Z}^{-1}}\gamma^*\left(\left(\Ad_U(\nabla f(U)U)\right)_{<}\right),
\left(\Ad_{Z_{0,+}}(Z^{-1}\delta Z)\right)_{<}\right\rangle.
\end{multline*}

Further, $\gamma^*\left(\left(\Ad_U(\nabla f(U)U)\right)_{<}\right)\in\n_-$, hence 
$\Ad_{\bar Z_{0,-}^{-1}}\left(\gamma^*\left(\left(\Ad_U(\nabla f(U)U)\right)_{<}\right)\right)\in\n_-$, so that
$\Ad_{\wog}\Ad_{\bar Z_{0,-}^{-1}}\left(\gamma^*\left(\left(\Ad_U(\nabla f(U)U)\right)_{<}\right)\right)\in\n_+$. Therefore
\begin{multline*}
\left\langle\Ad_{{\tilde Z}^{-1}}\gamma^*\left(\left(\Ad_U(\nabla f(U)U)\right)_{<}\right),
\left(\Ad_{Z_{0,+}}(Z^{-1}\delta Z)\right)_{<}\right\rangle\\
=\left\langle\Ad_{{\tilde Z}^{-1}}\gamma^*\left(\left(\Ad_U(\nabla f(U)U)\right)_{<}\right),
\Ad_{Z_{0,+}}(Z^{-1}\delta Z)\right\rangle\\
=\left\langle\Ad_{Z_{0,+}^{-1}}\Ad_{{\tilde Z}^{-1}}\gamma^*\left(\left(\Ad_U(\nabla f(U)U)\right)_{<}\right),
Z^{-1}\delta Z\right\rangle,
\end{multline*}
so finally
\[
\begin{aligned}
\langle \nabla f(U)U, U^{-1}\delta U\rangle &=
\left\langle \nabla f(U)U- \Ad_{Z_{0,+}^{-1}}\Ad_{{\tilde Z}^{-1}}
\gamma^*\left(\left(\Ad_U(\nabla f(U)U)\right)_{<}\right),Z^{-1}\delta Z \right\rangle\\
&=\left\langle \nabla f(U)U- \Ad_{Z^{-1}\bar Z_+}
\gamma^*\left(\left(\Ad_U(\nabla f(U)U)\right)_{<}\right),Z^{-1}\delta Z \right\rangle
\end{aligned}
\]
since $Z_{0,+}^{-1}\wog\bar Z_{0,-}^{-1}=Z^{-1}\bar Z_+$.

Recall that $\langle\nabla\hat f(Z)Z,Z^{-1}\delta Z\rangle=\langle\nabla f(U)U,U^{-1}\delta U\rangle$, hence
\[
\begin{aligned}
\nabla^L\hat f&=\nabla \hat f(Z)Z=\nabla f(U)U-\Ad_{Z^{-1}\bar Z_+}\gamma^*\left(\left(\Ad_U(\nabla f(U)U)\right)_{<}\right)\\
\nabla^R\hat f&=\Ad_Z(\nabla \hat f(Z)Z)=\Ad_Z(\nabla f(U)U)-\Ad_{\bar Z_+}\gamma^*\left(\left(\Ad_U(\nabla f(U)U)\right)_{<}\right).
\end{aligned}
\]

To prove Theorem~\ref{onesidedpoisson} we need to verify that 
$\{\hat f_1,\hat f_2\}_{r^\bfG,r^\pu}(Z)=
\{f_1,f_2\}_{r^\pu_\bfG,r^\pu}(U)$ for $U=h_\er^{-1}(Z)$. Recall that
\begin{equation}\label{brackets}
\begin{aligned}
\{\hat f_1,\hat f_2\}_{r^\bfG,r^\pu}&=\langle R_+^\pu\nabla^L\hat f_1,\nabla^L\hat f_2\rangle
-\langle R_+^\bfG\nabla^R\hat f_1,\nabla^R\hat f_2\rangle,\\
\{f_1,f_2\}_{r^\pu_\bfG,r^\pu}&=\langle R_+^\pu\nabla^L f_1,\nabla^L f_2\rangle
-\langle (R_+)^\pu_\bfG\nabla^R f_1,\nabla^R f_2\rangle,
\end{aligned}
\end{equation}
with
\begin{equation}\label{R+s}
\begin{aligned}
R_+^\bfG&=R_0^\bfG+\frac1{1-\gamma}\pi_>-\frac{\gamma^*}{1-\gamma^*}\pi_<,\\
R_+^\pu&=R_0^\pu+\pi_>,\qquad (R_+)^\pu_\bfG=R_0^\bfG+\pi_>.
\end{aligned}
\end{equation}
Further, $R_0^\bfG=(\frac12+S^\bfG)\pi_\ze$, where $\pi_\ze$ is the projection on $\h$ and
$S^\bfG$ is a skew-symmetric operator on $\h$ satisfying 
$S^\bfG(1-\gamma)=\frac12(1+\gamma)$ on $\h^{\Gamma_1}$.  
Consequently, on $\h^{\Gamma_2}$
\[
S^\bfG(\gamma^*-1)=S^\bfG(1-\gamma)\gamma^* =\frac12(1+\gamma)\gamma^*=\frac12(\gamma^*+1), 
\]
and hence 
\begin{equation}\label{form1}
R_0^\bfG(\gamma^*-1)=\gamma^*\quad \text{on $\h^{\Gamma_2}$.}
\end{equation}
 Finally, $(R_0^\bfG)^*=(\frac12-S^\bfG)\pi_\ze=\pi_\ze-R_0^\bfG$, 
so that 
\begin{equation}\label{form2}
(R_0^\bfG)^*(1-\gamma^*)=1\quad \text{on $\h^{\Gamma_2}$.}
\end{equation}

Introduce some notation:
\[
\alpha_f=\nabla^L f=\nabla f(U)U,\qquad \beta_f=\nabla^R f=\Ad_U\alpha_f.
\]
Further,
\[
\zeta_f=\Ad_{\bar Z_+}\gamma^*(\beta_f)_<,\qquad \xi_f=\Ad_{Z^{-1}}\zeta_f.
\]
Note that $\xi_f\in\n_+$ since $Z^{-1}\bar Z_+=Z_{0,+}^{-1}\wog\bar Z_{0,-}^{-1}$. Finally, denote
\[
\Ad_Z\alpha_f=\Ad_{\bgamma(\bar Z_+)}\beta_f=\eta_f,
\]
so that
\[
\nabla^L\hat f=\alpha_f-\xi_f,\qquad \nabla^R\hat f=\eta_f-\zeta_f.
\]

We will need the following technical result.

\begin{lemma}\label{adgamma}
For any $X\in \G^{\Gamma_1}$ and 
$\eta\in\g$ holds  $\gamma(\Ad_X\eta)=\Ad_{\bgamma(X)}\gamma(\eta)$.
\end{lemma}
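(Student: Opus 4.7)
The plan is to reduce the identity to the classical fact that Lie group homomorphisms intertwine the adjoint actions on their domains and codomains, and to handle the extension of $\gamma$ from $\g^{\Gamma_1}$ to all of $\g$ via the orthogonal projection.

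First I would split $\eta = \eta' + \eta''$ orthogonally with respect to $\Kil$, where $\eta' \in \g^{\Gamma_1}$ and $\eta'' \in (\g^{\Gamma_1})^\perp$. By the definition of the extended $\gamma$ recalled in Section~2.2, $\gamma(\eta) = \gamma(\eta')$ and $\gamma(\eta'') = 0$. By linearity it then suffices to verify the identity separately on $\eta'$ and $\eta''$.

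For the component $\eta' \in \g^{\Gamma_1}$, the restriction $\gamma|_{\g^{\Gamma_1}} \colon \g^{\Gamma_1} \to \g^{\Gamma_2}$ is a Lie algebra isomorphism, and since $\G$ is simply connected it integrates to the Lie group isomorphism $\bgamma \colon \G^{\Gamma_1} \to \G^{\Gamma_2}$. Differentiating the identity $\bgamma(X Y X^{-1}) = \bgamma(X)\bgamma(Y)\bgamma(X)^{-1}$ along a curve in $\G^{\Gamma_1}$ whose tangent at the identity is $\eta' \in \g^{\Gamma_1}$ yields
\[
\gamma(\Ad_X \eta') = \Ad_{\bgamma(X)} \gamma(\eta'),
\]
which is exactly the desired identity on the first summand.

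For the component $\eta''$, since $\gamma(\eta'') = 0$, it remains to show $\gamma(\Ad_X \eta'') = 0$, i.e.\ $\Ad_X \eta'' \in (\g^{\Gamma_1})^\perp$. This uses only invariance of $\Kil$: for every $Y \in \g^{\Gamma_1}$ we have $\Ad_{X^{-1}} Y \in \g^{\Gamma_1}$ because $X^{-1} \in \G^{\Gamma_1}$, so
\[
\langle \Ad_X \eta'', Y\rangle = \langle \eta'', \Ad_{X^{-1}} Y\rangle = 0.
\]
Hence $\Ad_X$ preserves the orthogonal decomposition, and both sides of the claimed identity vanish on $\eta''$. Combining the two cases finishes the proof.

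I do not anticipate a genuine obstacle here; the only subtle point is that the extended $\gamma$ involves the orthogonal projection $\g \to \g^{\Gamma_1}$, and the argument above works precisely because the Killing-type form $\Kil$ is $\Ad$-invariant, so this projection commutes with $\Ad_X$ for $X \in \G^{\Gamma_1}$.
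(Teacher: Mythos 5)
Your proof is correct and follows essentially the same route as the paper: the same orthogonal decomposition $\eta=\eta'+\eta''$, the same use of $\gamma|_{\g^{\Gamma_1}}$ being a Lie algebra isomorphism intertwined with $\bgamma$, and the same invariance-of-the-form argument to show $\Ad_X$ preserves $(\g^{\Gamma_1})^\perp$. The only cosmetic difference is that you verify the orthogonality at the group level via $\langle \Ad_X\eta'',Y\rangle=\langle\eta'',\Ad_{X^{-1}}Y\rangle$, whereas the paper checks the equivalent infinitesimal statement $\langle[\xi,\theta],\zeta\rangle=\langle\theta,[\xi,\zeta]\rangle=0$.
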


\begin{proof} 
Write $\eta=\lambda+\theta$ with $\lambda\in\g^{\Gamma_1}$ and $\theta$ orthogonal to 
$\g^{\Gamma_1}$ with respect to $\langle\cdot,\cdot\rangle$, then $\gamma(\eta)=\gamma(\lambda)$. 
Next, $\gamma(\Ad_X\eta)=\gamma(\Ad_X\lambda)+\gamma(\Ad_X\theta)$. Note that 
$\gamma(\Ad_X\lambda)=\Ad_{\bgamma(X)}\gamma(\lambda)= \Ad_{\bgamma(X)}\gamma(\eta)$ since $\gamma$
is a Lie algebra isomorphism on $\g^{\Gamma_1}$. It remains to prove that $\Ad_X\theta$ is orthogonal to $\g^{\Gamma_1}$, and thus $\gamma(\Ad_X\theta)=0$. This is equivalent to 
$\langle \ad_\xi\theta,\zeta\rangle=0$ for any $\xi,\zeta\in\g^{\Gamma_1}$. Clearly, 
$\langle [\xi,\theta],\zeta\rangle=\langle\theta,[\xi,\zeta]\rangle=0$ since 
$ [\xi,\zeta]\in\g^{\Gamma_1}$.
\end{proof}

 From~\eqref{brackets} and~\eqref{R+s} follows that the non-diagonal part of $\{\hat f_1,\hat f_2\}_{\bfG,\varnothing}$ equals
\begin{equation}\label{brack}
\langle (\nabla^L\hat f_1)_>,\nabla^L\hat f_2\rangle
-\left\langle \frac1{1-\gamma}(\nabla^R\hat f_1)_>,\nabla^R\hat f_2\right\rangle
+\left\langle\frac{\gamma^*}{1-\gamma^*}(\nabla^R \hat f_1)_<,\nabla^R\hat f_2\right\rangle.
\end{equation}

The first expression in \eqref{brack} is equal to
\[
\langle (\alpha_{f_1}-\xi_{f_1})_>,\alpha_{f_2}-\xi_{f_2}\rangle=
\langle (\alpha_{f_1})_>,\alpha_{f_2}\rangle-\langle (\xi_{f_1})_>,\alpha_{f_2}\rangle-
\langle (\alpha_{f_1})_>,\xi_{f_2}\rangle+\langle (\xi_{f_1})_>,\xi_{f_2}\rangle.
\]
Recall that $\xi_f\in \n_+$, so the last two expressions above vanish. The second expression equals
\[
\langle (\xi_{f_1})_>,\alpha_{f_2}\rangle=\langle \xi_{f_1},\alpha_{f_2}\rangle=
\langle\Ad_{Z^{-1}}\zeta_{f_1},\alpha_{f_2}\rangle=\langle \zeta_{f_1},\Ad_Z\alpha_{f_2}\rangle=
\langle\zeta_{f_1},\eta_{f_2}\rangle,
\]
so that finally
\[
\langle (\nabla^L\hat f_1)_>,\nabla^L\hat f_2\rangle=
\langle (\alpha_{f_1})_>,\alpha_{f_2}\rangle-\langle\zeta_{f_1},\eta_{f_2}\rangle.
\]

The second expression in \eqref{brack} is equal to
\[
\begin{aligned}
&\left\langle \frac1{1-\gamma}(\eta_{f_1}-\zeta_{f_1})_>,\eta_{f_2}-\zeta_{f_2}\right\rangle\\
&=\langle(\eta_{f_1}-\zeta_{f_1})_> ,\eta_{f_2}-\zeta_{f_2}\rangle+
\left\langle \frac{\gamma}{1-\gamma}(\eta_{f_1}-\zeta_{f_1})_>, \eta_{f_2}-\zeta_{f_2}\right\rangle\\
&=\langle(\eta_{f_1}-\zeta_{f_1})_>,\eta_{f_2}-\zeta_{f_2}\rangle+
\left\langle (\eta_{f_1}-\zeta_{f_1})_>,\frac{\gamma^*}{1-\gamma^*}(\eta_{f_2}-\zeta_{f_2})\right\rangle.
\end{aligned}
\]
Note that
\[
\begin{aligned}
\frac{\gamma^*}{1-\gamma^*}(\eta_{f_2}-\zeta_{f_2})&=
\frac1{1-\gamma^*}\left({\gamma^*}(\eta_{f_2})-\zeta_{f_2}\right)+\zeta_{f_2}\\
&=\frac1{1-\gamma^*}\left({\gamma^*}(\Ad_{\bgamma(\bar Z_+)}\beta_{f_2})
-\Ad_{\bar Z_+}\gamma^*(\beta_{f_2})_<\right)+\zeta_{f_2}\\
&=\frac1{1-\gamma^*}\left(\Ad_{\bgamma^*\bgamma(\bar Z_+)}{\gamma^*}(\beta_{f_2})
-\Ad_{\bar Z_+}\gamma^*(\beta_{f_2})_<\right)+\zeta_{f_2}\\
&=\frac1{1-\gamma^*}\left(\Ad_{\bar Z_+}\gamma^*(\beta_{f_2})_{\geq}\right)+\zeta_{f_2},
\end{aligned}
\]
since $\bgamma^*\bgamma(\bar Z_+)=\bar Z_+$. Consequently,
\[
\begin{aligned}
&\left\langle (\eta_{f_1}-\zeta_{f_1})_>,\frac{\gamma^*}{1-\gamma^*}(\eta_{f_2}-\zeta_{f_2})\right\rangle\\
&=\left\langle (\eta_{f_1}-\zeta_{f_1})_>,
\frac1{1-\gamma^*}\left(\Ad_{\bar Z_+}\gamma^*(\beta_{f_2})_{\geq}\right)+\zeta_{f_2}\right\rangle\\
&=\left\langle (\eta_{f_1}-\zeta_{f_1})_>,\zeta_{f_2}\right\rangle,
\end{aligned}
\]
since $\bar Z_+\in \N_+$ and hence $\Ad_{\bar Z_+}\gamma^*(\beta_{f_2})_{\geq}\in\b_+$, so that finally
\[
\left\langle \frac1{1-\gamma}(\nabla^R\hat f_1)_>,\nabla^R\hat f_2\right\rangle=
\langle(\eta_{f_1})_>,\eta_{f_2}\rangle-\langle(\zeta_{f_1})_>,\eta_{f_2}\rangle.
\]

The third expression in \eqref{brack} is treated similarly to the second one:
\[
\begin{aligned}
&\left\langle \frac{\gamma^*}{1-\gamma^*}(\eta_{f_1}-\zeta_{f_1})_>,\eta_{f_2}-\zeta_{f_2}\right\rangle\\
&=\left\langle \frac1{1-\gamma^*}\left(\Ad_{\bar Z_+}\gamma^*(\beta_{f_1})_{\geq}\right)_<+(\zeta_{f_1})_<,
\eta_{f_2}-\zeta_{f_2}\right\rangle\\
&=\langle (\zeta_{f_1})_<,\eta_{f_2}\rangle-\langle (\zeta_{f_1})_<,\zeta_{f_2}\rangle.
\end{aligned}
\]
since $\Ad_{\bar Z_+}\gamma^*(\beta_{f_1})_{\geq}\in\b_+$.

Further,
$-\langle\zeta_{f_1},\eta_{f_2}\rangle+\langle(\zeta_{f_1})_>,\eta_{f_2}\rangle+\langle(\zeta_{f_1})_<,\eta_{f_2}\rangle=
-\langle(\zeta_{f_1})_\ze,(\eta_{f_2})_\ze\rangle$.
Note that for $A\in\N_+$ and $\beta\in\g$ one has $(\Ad_A\beta)_<=(\Ad_A\beta_<)_<$ and for 
$\xi\in\n_+$ one has 
$\langle\Ad_A\beta_<,\xi\rangle=\langle\Ad_A\beta,\xi\rangle$, hence
\[
\begin{aligned}
&-\langle(\eta_{f_1})_>,\eta_{f_2}\rangle-\langle(\zeta_{f_1})_<,\zeta_{f_2}\rangle=
-\langle\eta_{f_1},(\eta_{f_2})_<\rangle-\langle\zeta_{f_1},(\zeta_{f_2})_>\rangle\\
&=-\left\langle \eta_{f_1},\left(\Ad_{\bgamma(\bar Z_+)}\beta_{f_2}\right)_<\right\rangle-
\left\langle\Ad_{\bar Z_+}\gamma^*(\beta_{f_1})_<,(\zeta_{f_2})_>\right\rangle\\
&=-\left\langle \eta_{f_1},\left(\Ad_{\bgamma(\bar Z_+)}(\beta_{f_2})_<\right)_<\right\rangle-
\left\langle\Ad_{\bar Z_+}\gamma^*(\beta_{f_1}),(\zeta_{f_2})_>\right\rangle\\
&=-\left\langle \eta_{f_1},\Ad_{\bgamma(\bar Z_+)}(\beta_{f_2})_<\right\rangle+
\left\langle \eta_{f_1},\left(\Ad_{\bgamma(\bar Z_+)}(\beta_{f_2})_<\right)_{\geq}\right\rangle-
\left\langle\Ad_{\bar Z_+}\gamma^*(\beta_{f_1}),(\zeta_{f_2})_>\right\rangle.
\end{aligned}
\]
The first term above is equal to
\begin{multline*}
-\left\langle \eta_{f_1},\Ad_{\bgamma(\bar Z_+)}(\beta_{f_2})_<\right\rangle=
-\left\langle \Ad_{\bgamma(\bar Z_+)}\beta_{f_1},\Ad_{\bgamma(\bar Z_+)}(\beta_{f_2})_<\right\rangle\\
=-\langle \beta_{f_1},(\beta_{f_2})_<\rangle=-\langle (\beta_{f_1})_>,\beta_{f_2}\rangle.
\end{multline*}
The second term above is equal to
\[
\begin{aligned}
&\left\langle \eta_{f_1},\left(\Ad_{\bgamma(\bar Z_+)}(\beta_{f_2})_<\right)_{\geq}\right\rangle=
\left\langle \Ad_{\bgamma(\bar Z_+)}\beta_{f_1},\left(\Ad_{\bgamma(\bar Z_+)}(\beta_{f_2})_<\right)_{\geq}\right\rangle\\
&=\left\langle \gamma^*\Ad_{\bgamma(\bar Z_+)}\beta_{f_1},
d\gamma^*\left(\Ad_{\bgamma(\bar Z_+)}(\beta_{f_2})_<\right)_{\geq}\right\rangle\\
&=
\left\langle \Ad_{\bar Z_+}\gamma^*(\beta_{f_1}),\left(\Ad_{\bar Z_+}\gamma^*(\beta_{f_2})_<\right)_{\geq}\right\rangle
=\left\langle \Ad_{\bar Z_+}\gamma^*(\beta_{f_1}),(\zeta_{f_2})_{\geq}\right\rangle,
\end{aligned}
\]
which together with the third term gives
\[
\left\langle \left(\Ad_{\bar Z_+}\gamma^*(\beta_{f_1})\right)_\ze,(\zeta_{f_2})_\ze\right\rangle=
\langle \gamma^*(\eta_{f_1})_\ze,(\zeta_{f_2})_\ze\rangle.
\]

Therefore, the total contribution of the non-diagonal terms equals to
\[
\langle (\alpha_{f_1})_>,\alpha_{f_2}\rangle-\langle (\beta_{f_1})_>,\beta_{f_2}\rangle
-\langle(\zeta_{f_1})_\ze,(\eta_{f_2})_\ze\rangle
+\langle \gamma^*(\eta_{f_1}))_\ze,(\zeta_{f_2})_\ze\rangle.
\]
On the other hand, the total contribution of the non-diagonal terms to 
$\{f_1,f_2\}_{r_\bfGc^\pu,r_\bfGr^\pu}$ equals to
\[
\langle (\alpha_{f_1})_>,\alpha_{f_2}\rangle-\langle (\beta_{f_1})_>,\beta_{f_2}\rangle,
\]
so it remains to prove that
\begin{multline}\label{bradiag}
\langle R_0^\varnothing(\alpha_{f_1}-\xi_{f_1}),(\alpha_{f_2}-\xi_{f_2})_\ze\rangle-
\langle R_0^\bfG (\eta_{f_1}-\zeta_{f_1}),(\eta_{f_2}-\zeta_{f_2})_\ze\rangle\\
=\langle R_0^\varnothing(\alpha_{f_1}),(\alpha_{f_2})_\ze\rangle-
\langle R_0^\bfG (\beta_{f_1}),(\beta_{f_2})_\ze\rangle
+\langle(\zeta_{f_1})_\ze,(\eta_{f_2})_\ze\rangle
-\langle \gamma^*(\eta_{f_1})_\ze,(\zeta_{f_2})_\ze\rangle.
\end{multline}

 Recall that $\xi_f\in\n_+$, hence $(\xi_f)_\ze$ vanishes. Therefore, the first term on the left in
\eqref{bradiag} equals 
$\langle R_0^\varnothing(\alpha_{f_1}),(\alpha_{f_2})_\ze\rangle$, which coincides with the first term on the right.

Further, $\zeta_f=\Ad_{\bar Z_+}(\gamma^*(\beta_f))-\Ad_{\bar Z_+}(\gamma^*(\beta_f)_\geq)$, hence
$(\zeta_f)_\ze=\gamma^*(\eta_f)_\ze-\gamma^*(\beta_f)_\ze$ since $\bar Z_+\in\n_+$ and $\gamma^*(\beta_f)_\geq\in\b_+$, 
so that finally 
\[
(\eta_f-\zeta_f)_\ze=(1-\gamma^*)(\eta_f-\beta_f)_\ze+(\beta_f)_\ze.
\]

Note that $(\eta_f-\beta_f)_\ze\in\h^{\Gamma_2}$, 
consequently, in view of \eqref{form1}, the second term in \eqref{bradiag} can be rewritten as
\[
\begin{aligned}
&\langle R_0^\bfG (\eta_{f_1}-\zeta_{f_1}),(\eta_{f_2}-\zeta_{f_2})_\ze\rangle\\
&=\langle R_0^\bfG (1-\gamma^*)(\eta_{f_1}-\beta_{f_1}),(\eta_{f_2}-\zeta_{f_2})_\ze\rangle+
\langle R_0^\bfG (\beta_{f_1}),(\eta_{f_2}-\zeta_{f_2})_\ze\rangle\\
&=-\langle \gamma^*(\eta_{f_1}-\beta_{f_1})_\ze,(\eta_{f_2}-\zeta_{f_2})_\ze\rangle+
\langle R_0^\bfG (\beta_{f_1}),(\eta_{f_2}-\zeta_{f_2})_\ze\rangle\\
&=-\langle (\zeta_{f_1})_\ze,(\eta_{f_2})_\ze\rangle
+\langle \gamma^*(\eta_{f_1})_\ze,(\zeta_{f_2})_\ze\rangle
\!-\!\langle \gamma^*(\beta_{f_1})_\ze,(\zeta_{f_2})_\ze\rangle
+\langle R_0^\bfG (\beta_{f_1}),(\eta_{f_2}-\zeta_{f_2})_\ze\rangle.
\end{aligned}
\]
The first two terms in the last line above are cancelled by the last two terms in the right hand side of \eqref{bradiag}.
So, \eqref{bradiag} is reduced to
\[
\langle R_0^\bfG (\beta_{f_1}),(\eta_{f_2}-\zeta_{f_2})_\ze\rangle
-\langle \gamma^*(\beta_{f_1})_\ze,(\zeta_{f_2})_\ze\rangle=
 \langle R_0^\bfG (\beta_{f_1}),(\beta_{f_2})_\ze\rangle.
\]

The first term in the left hand side above can be rewritten using \eqref{form2} as
\begin{multline*}
\langle R_0^\bfG (\beta_{f_1}),(\eta_{f_2}-\zeta_{f_2})_\ze\rangle=
\langle R_0^\bfG (\beta_{f_1}),(1-\gamma^*)(\eta_{f_2}-\beta_{f_2})_\ze\rangle
+\langle R_0^\bfG (\beta_{f_1}),(\beta_{f_2})_\ze\rangle\\
=\langle  (\beta_{f_1})_\ze,(R_0^\bfG)^*(1-\gamma^*)(\eta_{f_2}-\beta_{f_2})\rangle
+\langle R_0^\bfG (\beta_{f_1}),(\beta_{f_2})_\ze\rangle\\
=\langle  (\beta_{f_1})_\ze,(\eta_{f_2}-\beta_{f_2})_\ze\rangle
+\langle R_0^\bfG (\beta_{f_1}),(\beta_{f_2})_\ze\rangle\\
=\langle  \gamma^*(\beta_{f_1})_+,(\zeta_{f_2})_\ze\rangle
+\langle R_0^\bfG (\beta_{f_1}),(\beta_{f_2})_\ze\rangle,
\end{multline*}
which proves \eqref{bradiag}.

\section{The $A_n$ case}
In this Section we assume that $\G=SL_n$, and hence 
$\Gamma_1$ and $\Gamma_2$ can be identified with subsets of $[1,n-1]$. Note that the isometry condition on $\gamma$ implies that 
if $i,i+1\in\Gamma_1$ then $\gamma(i+1)=\gamma(i)\pm1$. We say that $\bfG$ is {\it oriented\/}  
if $i,i+1\in\Gamma_1$ yields $\gamma(i+1)=\gamma(i)+1$. In other words, the orientation of every subset of $\Gamma_1$ that consists of consecutive roots is preserved by $\gamma$. In~\cite{GSVple} we
treated the case when both BD triples $\Gamma^\er$ and $\Gamma^\ec$ are oriented. In this paper we 
lift this restriction and consider the general case.

\subsection{Combinatorial data} 
Let us briefly remind combinatorial constructions introduced in~\cite{GSVple} together with their non-oriented analogs 
(see~\cite{GSVple} for more details and examples). 

For any $i\in [1,n]$ put
\[
i_+=\min\{j\in [1,n]\setminus\Gamma_1\: \ j\ge i\}, \qquad
i_-=\max\{j\in [0,n]\setminus\Gamma_1\: \ j<i\}.
\]
The interval $\Delta(i)=[i_-+1,i_+]$ is called the {\it $X$-run\/} of $i$. 
Clearly, all distinct $X$-runs form a 
partition of $[1,n]$. The $X$-runs are numbered consecutively from left to right. The dual partition of $[1,n]$ into {\it $X^\adj$-runs\/} is defined via $\Delta^\adj(i)=[n-i_++1,n-i_-]$; the $X^\adj$-runs are numbered consecutively from right to left.
In a similar way, $\Gamma_2$ defines another two partitions of $[1,n]$ into $Y$-runs 
$\bar\Delta(i)$ and $Y^\adj$-runs $\bar\Delta^\adj(i)$.

Runs of length one are called trivial. The map $\gamma$ induces a bijection on the sets of pairs of nontrivial $X$- and $X^\adj$-runs and 
$Y$- and $Y^\adj$-runs.  Abusing notation, we denote by the same $\gamma$ and say that 
$(\bar\Delta_i,\bar\Delta^\adj_i)=\gamma(\Delta_j,
\Delta^\adj_j)$ if there exists $k\in\Delta_j$ such that $\bar\Delta(\gamma(k))=\bar\Delta_i$. The inverse of the bijection $\gamma$ is naturally denoted $\gamma^*$.

The {\it BD graph\/} $\BD_\bfG$ is defined as follows. The vertices of $\BD_\bfG$ are two copies of the set of 
positive simple roots identified with $[1,n-1]$. One of the sets is called the {\it upper\/} part of the graph, 
and the other is called the
{\it lower\/} part. A vertex $i\in\Gamma_1$ is connected with an {\it inclined\/} edge to the vertex 
$\gamma(i)\in\Gamma_2$. Finally, vertices $i$ and $n-i$ in the same part are connected with a {\it horizontal\/} edge. 
If $n=2k$ and $i=n-i=k$, the corresponding horizontal edge is a loop. 

Given a pair of BD triples $(\bfGr, \bfGc)$, one can define a BD graph $\BD_{\bfGr, \bfGc}$ as follows. Take $\BD_{\bfGr}$ with all inclined edges directed downwards and 
$\BD_{\bfGc}$ in which all inclined edges are directed upwards. Superimpose these graphs by identifying the corresponding vertices.  In the resulting graph, for every pair of vertices 
$i, n -i$ in either top or bottom row there are two edges joining them. We give these edges opposite orientations. If $n$ is even, then we retain only one loop at each of the 
two vertices labeled $\frac{n}{2}$. The result is a directed graph $\BD_{\bfGr, \bfGc}$ on 
$2(n-1)$ vertices. For example, consider the case of $GL_7$ with 
$\bfGr=\left(\{1,2,5\}, \{1,3,4\}, 1\mapsto 4, 2\mapsto 3, 5\mapsto 1\right)$ and
$\bfGc=\left(\{3,4,6\}, \{2,3,5\}, 3\mapsto2, 4\mapsto3, 6\mapsto 5\right)$. The corresponding graph $\BD_{\bfGr, \bfGc}$ is shown 
on the left in Fig.~\ref{fig:bdgraph}. For horizontal edges, no direction is indicated, which means that they can be traversed in both directions. 

\begin{figure}[ht]
\begin{center}
\includegraphics[height=3.5cm]{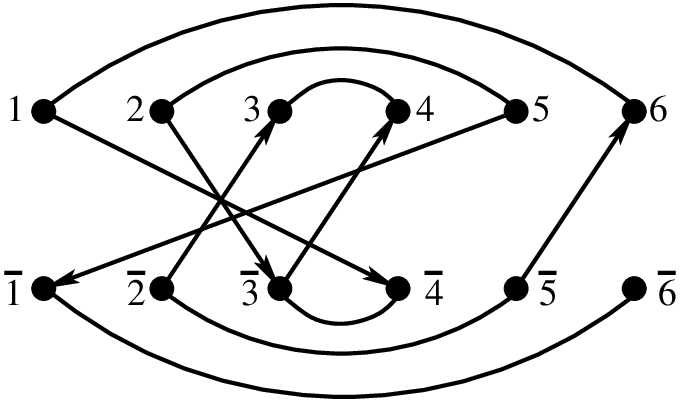}
\end{center}
\caption{BD graph $\BD_{\bfGr, \bfGc}$}
\label{fig:bdgraph}
\end{figure}

A directed path in $\BD_{\bfGr, \bfGc}$ is called {\em alternating\/} if horizontal and inclined edges in the path alternate. In particular, an edge is a (trivial) alternating path. 
An alternating path with coinciding endpoints and an even number of edges is called an {\em alternating cycle}. We can decompose the set of directed edges of $\BD_{\bfGr, \bfGc}$ into a disjoint union of maximal alternating paths and alternating cycles. 
If the resulting collection contains no alternating cycles, we call the pair $(\bfGr, \bfGc)$ {\em aperiodic\/}. 
For the graph in Fig.~\ref{fig:bdgraph}, 
the corresponding maximal alternating paths are $52\bar3\bar4$, $25\bar1\bar6$, $\bar5\bar2 34$,  $\bar2\bar5 61\bar4\bar3 43$, $16$, and $\bar6\bar1$ (here vertices in the lower part are marked with a dash for better visualization). None of them is an alternating cycle, so the corresponding pair is aperiodic.
 
Every horizontal directed edge in an upper part of the BD graph  defines a pair of {\it blocks\/} carved out from two $n\times n$ matrices: a matrix of indeterminates $X=(x_{ij})$ and the dual matrix $X^\adj$ obtained via conjugation of the cofactor matrix of $X$ by $w_0\J$
with $\J=\diag((-1)^i)_{i=1}^n$ and $w_0$ being the matrix of the longest permutation. The rows of $X$ are partitioned into $X$-runs with respect to $\bfGr$, and the columns of $X$, into $X$-runs with respect to $\bfGc$. The rows and columns of $X^\adj$ are partitioned into the corresponding dual $X^\adj$-runs: rows with respect to $\bfGr$ and columns with respect to $\bfGc$. A block in $X$ is a submatrix $X_{[\alpha,n]}^{[1,\beta]}$ whose row and column sets are unions of consecutive $X$-runs; a block in $X^\adj$ is defined similarly via $X^\adj$-runs. The $X$-block that corresponds to a horizontal directed edge $i\to (n-i)$ is the minimal block in $X$ that contains the subdiagonal through the entries $(n-i+1,1)$ and $(n,i)$. These entries are called the {\em exit point\/} and the {\em entrance point\/} of the $X$-block, respectively. Note that the exit point of an  $X$-block belongs to its uppermost $X$-run (with respect to $\bfGr$), and its entrance point belongs to the rightmost $X$-run (with respect to 
$\bfGc$). The $X^\adj$-block that corresponds to the same horizontal edge is the minimal block in $X^\adj$ that contains the subdiagonal through the entries $(i+1,1)$ and $(n,n-i)$ called the exit and the entrance points of the $X^\adj$-block; these points have similar extremal properties as the corresponding points of an $X$-block. It is easy to see that if $(i,1)$ and 
$(i^\adj,1)$ are the entry points of the $X$- and $X^\adj$-blocks corresponding to the same horizontal edge then $i+i^\adj=n+2$.

In a similar way, every horizontal directed edge in the lower part of the BD graph defines a pair 
of blocks carved out from an $n\times n$  matrix of indeterminates $Y=(y_{ij})$ and the dual matrix $Y^\adj$ obtained from cofactor matrix of $Y$ by the same procedure as above. 
The rows and columns of $Y$ are partitioned into $Y$-runs with respect to $\bfGr$ and $\bfGc$, respectively. The row and columns of $Y^\adj$ are partitioned into the corresponding $Y^\adj$-runs. A block in $Y$ is a submatrix $Y_{[1,\bar\alpha]}^{[\bar\beta,n]}$ whose row and column sets are unions of consecutive $Y$-runs; a block in $Y^\adj$ is defined similarly via 
$Y^\adj$-runs.
The $Y$-block that corresponds to a horizontal directed edge $i\to (n-i)$ is the minimal block in $Y$ that contains the superdiagonal through the entries $(1,n-i+1)$ and $(i,n)$. The $Y^\adj$-block that corresponds to the same 
horizontal edge is the minimal block in $Y^\adj$ that contains the superdiagonal 
through the entries $(1,i+1)$ and $(n-i,n)$. The exit and the entrance points retain their meaning and have similar extremal properties: namely, the exit point belons to the leftmost $Y$- or $Y^\adj$-run with respect to $\bfGc$, and the entrance point belongs to the lower $Y$- or 
$Y^\adj$-run with respect to $\bfGr$. If $(1,j)$ and 
$(1,j^\adj)$ are the entry points of the $Y$- and $Y^\adj$-blocks corresponding to the same horizontal edge then $j+j^\adj=n+2$.

For the BD graph shown in Fig.~\ref{fig:bdgraph}, the rows of $X$ are partitioned into the
$X$-runs $\Delta_1^\er=[1,3]$, $\Delta_2^\er=[4,4]$, $\Delta_3^\er=[5,6]$, and $\Delta_4^\er=[7,7]$; the first and the thrid are nontrivial. The columns of $X$ are partitioned into the $X$-runs
$\Delta_1^\ec=[1,1]$, $\Delta_2^\ec=[2,2]$, $\Delta_3^\ec=[3,5]$, $\Delta_4^\ec=[6,7]$; the last two are nontrivial. Consequently, the dual partition of rows and columns of $X^\adj$ is given by
$(\Delta_1^\er)^\adj=[5,7]$, $(\Delta_2^\er)^\adj=[4,4]$, $(\Delta_3^\er)^\adj=[2,3]$,  
$(\Delta_4^\er)^\adj=[1,1]$, and $(\Delta_1^\ec)^\adj=[7,7]$, $(\Delta_2^\ec)^\adj=[6,6]$, 
$(\Delta_3^\ec)^\adj=[3,5]$, $(\Delta_4^\ec)^\adj=[1,2]$. Thus, the $X$-block defined by the edge $5\to2$ in the upper part is the submatrix $X^{[1,5]}$, and the corresponding $X^\adj$-block is
the submatrix $(X^\adj)_{[5,7]}^{[1,2]}$.  Similarly, the rows of $Y$ are  partitioned into the $Y$-runs $\bar\Delta_1^\er=[1,2]$, $\bar\Delta_2^\er=[3,5]$, 
$\bar\Delta_3^\er=[6,6]$, and $\bar\Delta_4^\er=[7,7]$; the first two of them are nontrivial.
The columns of $Y$ are partitioned into the $Y$-runs
$\bar\Delta_1^\ec=[1,1]$, $\bar\Delta_2^\ec=[2,4]$, $\bar\Delta_3^\ec=[5,6]$, 
$\bar\Delta_4^\ec=[7,7]$; the second and the third are nontrivial. Consequently, the dual partition of rows and columns of $Y^\adj$ is given by
$(\bar\Delta_1^\er)^\adj=[6,7]$, $(\bar\Delta_2^\er)^\adj=[3,5]$, 
$(\bar\Delta_3^\er)^\adj=[2,2]$, $(\bar\Delta_4^\er)^\adj=[1,1]$, and 
$(\bar\Delta_1^\ec)^\adj=[7,7]$, $(\bar\Delta_2^\ec)^\adj=[4,6]$, 
$(\bar\Delta_3^\ec)^\adj=[2,3]$, $(\bar\Delta_4^\ec)^\adj=[1,1]$. Thus, the $Y$-block defined by the edge $\bar3\to\bar4$ in the lower part is the submatrix $Y_{[1,5]}^{[5,7]}$, and the corresponding $Y^\adj$-block is the submatrix $(Y^\adj)_{[1,5]}^{[4,7]}$. 

Every maximal alternating path defines a pair of matrices glued from blocks defined above that correspond to horizontal edges of the path. There are two types of gluing: row-to-row gluing governed by the BD triple $\bfGr$ and column-to-column gluing governed by the BD triple $\bfGc$. The first situation occurs when we consider three consecutive edges in an alternating path such that the first of them is a horizontal edge $i\to(n-i)$ in the upper part, the second one is an inclined edge $(n-i)\to k$ with $k=\gammar(n-i)$, and the third one is the horizontal edge 
$k\to(n-k)$ in the lower part. Assume that $n-i$ belongs to an $X$-run $\Delta_j^\er$ and 
$k$ belongs to a $Y$-run $\bar\Delta_m^\er$; as explained above, this means that 
$(\bar\Delta_m^\er,(\bar\Delta_m^\er)^\adj)=\gammar(\Delta_j^\er,(\Delta_j^\er)^\adj)$. Note that each $X$-run defined by $\bfGr$ contains a connected component of $\Gamma_1^\er$, while 
each $Y$-run defined by $\bfGr$ contains a connected component of $\Gamma_2^\er$. If the restriction of $\gammar$ to this connected component is oriented we glue $\Delta_j^\er$ to $\bar\Delta_m^\er$ 
and $(\Delta_j^\er)^\adj$ to $(\bar\Delta_m^\er)^\adj$. If the restriction of $\gammar$ reverses the orientation, we glue $\Delta_j^\er$ to $(\bar\Delta_m^\er)^\adj$ and $(\Delta_j^\er)^\adj$ to 
$\bar\Delta_m^\er$. 

For example, consider the path $52\bar3\bar4$ in the BD graph shown in Fig.~\ref{fig:bdgraph}. The corresponding blocks were described above. The exit point of the $X$-block $X^{[1,5]}$ belongs to  $\Delta_1^\er=[1,3]$, and the corresponding connected component of $\Gamma_1^\er$ is 
$[1,2]$. The entry point of the $Y$-block $Y_{[1,5]}^{[5,7]}$ belongs to 
$\bar\Delta_2^\er=[3,5]$, and the corresponding connected component of $\Gamma_2^\er$ is 
$[3,4]$. The map $\gammar$ on $[1,2]$ reverses the orientation, so $\Delta_1^\er$ is glued to
$(\bar\Delta_2^\er)^\adj=[3,5]$ and $(\Delta_1^\er)^\adj=[5,7]$ is glued to $\bar\Delta_2^\er$. The resulting matrices are shown in Fig.~\ref{fig:bigmatrices1}. All entries outside the blocks are equal to zero. The numbers of the rows that are glued are indicated in the figure.

\begin{figure}[ht]
\begin{center}
\includegraphics[height=3.5cm]{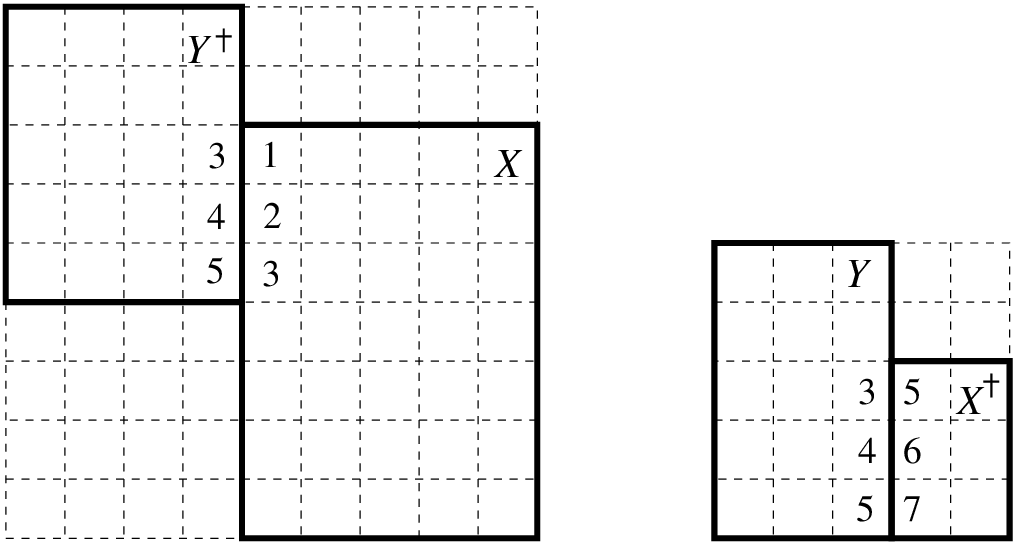}
\caption{The pair of matrices corresponding to the path $52\bar3\bar4$}
\label{fig:bigmatrices1}
\end{center}
\end{figure}

The column-to-column gluing occurs when we consider three consecutive edges in an alternating path such that the first of them is a horizontal edge $i\to(n-i)$ in the lower part, the second one is an inclined edge $(n-i)\to k$ with $n-i=\gammac(k)$, and the third one is the horizontal edge $k\to(n-k)$ in the upper part. Assume that $n-i$ belongs to a $Y$-run $\bar\Delta_j^\ec$ and 
$k$ belongs to an $X$-run $\Delta_m^\ec$; as explained above, this means that 
$(\bar\Delta_j^\ec,(\bar\Delta_j^\ec)^\adj)=\gammac(\Delta_m^\ec,(\Delta_m^\ec)^\adj)$.  If the restriction of $\gammac$ to the connected component  of $\Gamma_1^\ec$ contained in $\Delta_m^\ec$
is oriented we glue $\bar\Delta_j^\ec$ to $\Delta_m^\ec$ 
and $(\bar\Delta_j^\ec)^\adj$ to $(\Delta_m^\ec)^\adj$. If the restriction of $\gammac$ reverses the orientation, we glue $\bar\Delta_j^\ec$ to $(\Delta_m^\ec)^\adj$ and $(\bar\Delta_j^\ec)^\adj$ to $\Delta_m^\ec$. 

For example, consider the path $\bar5\bar2 34$ in the BD graph shown in Fig.~\ref{fig:bdgraph}. The exit point of the $Y$-block $Y_{[1,5]}^{[2,7]}$ belongs to  $\bar\Delta_2^\ec=[2,4]$, 
and the corresponding connected component of $\Gamma_2^\ec$ is 
$[2,3]$. The entry point of the $X$-block $X_{[5,7]}^{[1,5]}$ belongs to 
$\Delta_3^\ec=[3,5]$, and the corresponding connected component of $\Gamma_1^\ec$ is 
$[3,4]$. The map $\gammac$ on $[3,4]$ preserves the orientation, so $\bar\Delta_2^\ec$ is glued to
$\Delta_3^\ec$ and $(\bar\Delta_2^\ec)^\adj=[4,6]$ is glued to $(\bar\Delta_3^\ec)^\adj=[3,5]$. The resulting matrices are shown in Fig.~\ref{fig:bigmatrices2}. All entries outside the blocks are equal to zero. The numbers of the columns that are glued are indicated in the figure.

\begin{figure}[ht]
\begin{center}
\includegraphics[height=3.5cm]{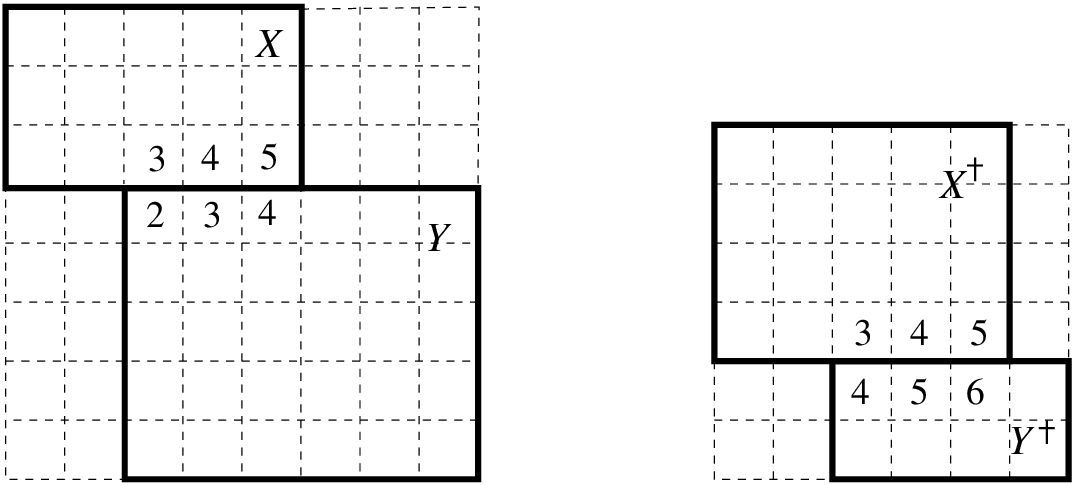}
\caption{The pair of matrices corresponding to the path $\bar5\bar234$}
\label{fig:bigmatrices2}
\end{center}
\end{figure}

In general, the number of blocks in the obtained pair of matrices is equal to the number of horizontal edges in the alternating path. Clearly, the entrance point of the first block is its lower right corner, while the exit point of the last block is its upper left corner, so each of the obtained matrices is square. The pair of matrices defined by the alternating path   
$\bar2\bar5 61\bar4\bar3 43$ is shown in Fig.~\ref{fig:bigmatrices3}.

\begin{figure}[ht]
\begin{center}
\includegraphics[height=5cm]{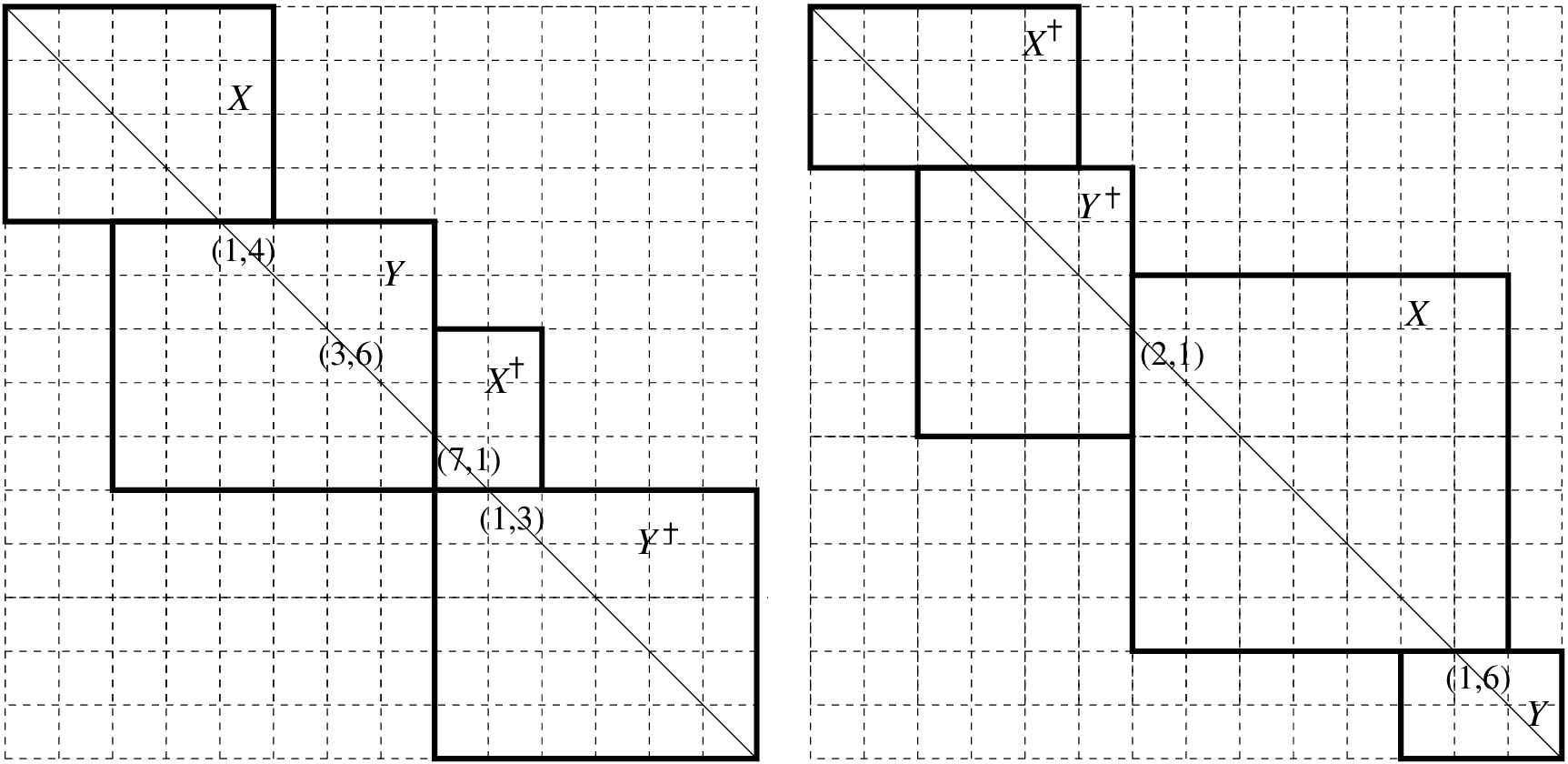}
\caption{The pair of matrices corresponding to the path $\bar2\bar5 61\bar4\bar3 43$}
\label{fig:bigmatrices3}
\end{center}
\end{figure}

Let $\L$ and $\L^\adj$ be two matrices corresponding to a maximal alternating path in $G_{\bfGr,\bfGc}$. Every initial segment 
of this path that ends with a horizontal edge defines a pair of distinguished trailing submatrices of $\L$ and $\L^\adj$ that are built of blocks that correspond to the horizontal edges in this segment. In the example shown in 
Fig.~\ref{fig:bigmatrices3}, the initial segment $\bar 2\bar 561$ defines a $7\times4$ submatrix in the lower right corner of the matrix on the left and a $8\times7$ submatrix in the lower right corner of the matrix on the right; both these submatrices consist of two blocks. 

For every pair of matrices as above we consider the set of principal trailing minors with the following property: the upper left corner of the minor belongs to an $X$-block or to a $Y$-block. 
For example, for the pair of matrices in Fig.~\ref{fig:bigmatrices1} these are the first three principal trailing minors of the second matrix and the last five of the first one. For the pair
of matrices in Fig.~\ref{fig:bigmatrices2} these are all principal trailing minors of the first matrix, while none of the second are used. 

\begin{remark} The situation described in the last example, when one of the matrices in the pair is built solely of $X$- and $Y$-blocks, and the other one is built solely of $X^\adj$- and $Y^\adj$-blocks occurs for all pairs if both $\bfGr$ and $\bfGc$ are oriented. For this reason we only needed one block matrix for each alternating path in our constructions in~\cite{GSVple}.
\end{remark}

Consider a trailing minor as above, and assume that its upper left corner contains an entry 
$x_{ij}$ (for $i>j$) or $y_{ij}$ (for $i<j$). We denote this minor by $\ttf_{ij}(X,Y)$ and its 
restriction to the diagonal $X=Y=Z$ by $f_{ij}(Z)$. Additionally, define $f_{ii}(Z)$ as the trailing minor of $Z$ in rows and columns $[i,n]$.

The following claim follows immediately from the construction.

\begin{proposition}
\label{onetoone}
For any pair $(i,j)\in[1,n]\times[1,n]$ there exists a unique function $\ttf_{ij}$. The upper left corner of the corresponding minor belongs to the $X$-block $X(i,j)$ defined by the horizontal edge $(n-i+j)\to(i-j)$ in the upper part of the graph (for $i>j$), or to the $Y$-block $Y(i,j)$ defined by the horizontal edge
$(n+i-j)\to (j-i)$ in the lower part of the graph (for $i<j$).
\end{proposition}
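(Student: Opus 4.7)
The plan is to verify both existence and uniqueness by chasing the construction: each position $(i,j)$ with $i\ne j$ determines uniquely a subdiagonal (for $i>j$) or superdiagonal (for $i<j$), hence a unique horizontal edge in $\BD_{\bfGr,\bfGc}$, hence a unique maximal alternating path, a unique pair of glued matrices, and finally a unique principal trailing minor. The diagonal case $i=j$ is covered separately by the definition of $f_{ii}$.

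For $i>j$, the entry $x_{ij}$ lies on the subdiagonal of $X$ on which row minus column equals $i-j$. By the description earlier in this section, the $X$-block associated with a horizontal edge $k\to(n-k)$ of the upper part is the minimal block (row-wise a union of $X$-runs for $\bfGr$, column-wise a union of $X$-runs for $\bfGc$) that contains the subdiagonal from $(n-k+1,1)$ to $(n,k)$, on which row minus column equals $n-k$. Matching $n-k=i-j$ forces $k=n-i+j$, so $(i,j)$ lies on the distinguished subdiagonal of the unique $X$-block $X(i,j)$ coming from the edge $(n-i+j)\to(i-j)$; the inequalities $1\le i-j\le n-1$ and $1\le n-i+j\le n-1$ show this edge is indeed a horizontal edge of $\BD_{\bfGr,\bfGc}$. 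This edge sits in exactly one maximal alternating path of the edge-disjoint decomposition, which in turn yields a canonical pair of glued matrices; the gluing rules place the genuine $X$-block $X(i,j)$ in exactly one of the two matrices, call it $\L$.

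The crucial geometric point is that the gluing arranges the subdiagonals of the constituent $X$-blocks (and the superdiagonals of the $Y$-blocks) to line up into the main diagonal of $\L$; hence $x_{ij}$ occupies a diagonal position of $\L$, and the principal trailing minor of $\L$ with that upper-left corner is a well-defined candidate for $\ttf_{ij}$. Uniqueness is then immediate: any trailing minor whose upper-left corner is $x_{ij}$ must live in a matrix containing $X(i,j)$, and within $\L$ a principal trailing minor is determined by its upper-left corner. The case $i<j$ is the mirror image: the superdiagonal through $y_{ij}$ is characterized by column minus row equal to $j-i$, matching the $Y$-block for the edge $(n+i-j)\to(j-i)$ in the lower part, and the argument is identical. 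The one step that requires care is the diagonal-alignment claim: namely, that at every row-to-row and column-to-column gluing (with the orientation-preserving vs.\ reversing branch dictated by the restrictions of $\gammar$ and $\gammac$) the entrance point of one block is identified with the exit point of the next exactly on the main diagonal of $\L$, so that the row and column offsets accumulated as one traverses the path remain equal.
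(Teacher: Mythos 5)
Your argument is correct and is essentially the paper's (the paper dismisses the claim with ``follows immediately from the construction''): you chase the same chain, matching the subdiagonal $i-j$ (resp.\ superdiagonal $j-i$) to the unique directed horizontal edge $(n-i+j)\to(i-j)$ (resp.\ $(n+i-j)\to(j-i)$), then to the unique maximal alternating path, glued matrix, and principal trailing minor. You also correctly isolate the one fact the whole thing rests on, namely that the gluing aligns the distinguished sub/superdiagonals of the constituent blocks with the main diagonal of $\L$, which is exactly what the paper's construction (entrance point at the lower right corner, exit point at the upper left corner of each block) guarantees.
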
. 

It follows from Proposition~\ref{onetoone} that we can unambiguously define $\L(i,j)$ for $i\ne j$ as the matrix corresponding to the maximal alternating path that goes through the horizontal edge $(n-i+j)\to(i-j)$ in the upper part of the graph (for $i>j$), or through the horizontal edge
$(n+i-j)\to (j-i)$ in the lower part of the graph (for $i<j$); let $\L^\adj(i,j)$ stand for the other matrix defined by the same path.  For example, the pair of matrices shown
in Fig.~\ref{fig:bigmatrices3} can be described as $\L(4,1)$ and $\L^\adj(4,1)$, or $\L(4,7)$ 
and $\L^\adj(4,7)$, or $\L^\adj(2,1)$ and $\L(2,1)$, etc.
It is clear from the definition that the matrices $\L(i,j)$ and $\L^\adj(i,j)$ depend only on the difference $i-j$.

Theorem~3.4 in~\cite{GSVple} claims that in the oriented case the family $\{f_{ij}\}$ forms a log-canonical coordinate system on $SL_n$ with respect to the Poisson bracket defined by the pair 
$\bfGr$, $\bfGc$. The proof is very technical and occupies 40 pages. Below we deduce a generalization of this result, which covers both the oriented and the non-oriented cases, from 
Theorem~\ref{twosidedpoisson}.

\subsection{The basis} The goal of this Section is the proof of the following generalization of Theorem~3.4 in~\cite{GSVple}.

Define $F_{\bfGr,\bfGc}=\{f_{ij}(Z):i,j\in[1,n], (i,j)\ne(1,1)\}$.

\begin{theorem}
\label{logcanbasis}
Let $(\bfGr,\bfGc)$ be an aperiodic pair of BD triples, then the family $F_{\bfGr,\bfGc}$ forms a log-canonical coordinate system on $SL_n$ with respect to the Poisson bracket 
$\Poi_{r^{\bfGc},r^{\bfGr}}$.
\end{theorem}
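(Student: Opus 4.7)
The plan is to transfer log-canonicity from the standard Poisson-homogeneous structure via the rational Poisson map $h\:\G_{r^\pu_\bfGr,r^\pu_\bfGc}\to \G_{r^\bfGr,r^\bfGc}$ of Theorem~\ref{twosidedpoisson}. Since $h$ is Poisson, one has the identity
\[
\{F_1\circ h,F_2\circ h\}_{r^\pu_\bfGr,r^\pu_\bfGc}(U)=\{F_1,F_2\}_{r^\bfGr,r^\bfGc}(h(U))
\]
for any rational $F_1,F_2$ on the target. Taking $F_k=f_{i_kj_k}$, the goal is to show that each pullback $f_{ij}\circ h$ is a Laurent monomial in the standard trailing minors $\Delta_{[p,n]}^{[q,n]}(U)$ of the source. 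Log-canonicity then cascades from the classical side to the pullbacks, and the aperiodicity hypothesis, which forces $h$ to be birational, lets us descend log-canonicity to all of $\G_{r^\bfGr,r^\bfGc}$.

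The heart of the argument is this monomial identification, which is the content of the forthcoming Theorem~\ref{bigthroughsmall}. Structurally, $h(U)=H^\er(U)\,U\,H^\ec(U)$ with $H^\er\in\N_+^{\Gamma_2^\er}$ and $H^\ec\in\N_-^{\Gamma_1^\ec}$ built by iterating $\bgammar$ and $(\bgammac)^*$ on $\bar V^\er(U)$ and $\bar V^\ec(U)$. Consequently, left multiplication by $H^\er$ and right multiplication by $H^\ec$ implement precisely the row-- and column--translations encoded by the BD isometries $\gammar,\gammac$ and their iterates. The block layout of $\L(i,j)$ along a maximal alternating path through the relevant horizontal edge of $\BD_{\bfGr,\bfGc}$ is engineered so that, upon substitution $Z=h(U)$, the distinguished trailing minor of $\L(i,j)$ unfolds this iteration and collapses to a ratio of standard trailing minors of $U$. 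Aperiodicity is essential: it is exactly the condition under which every alternating path terminates rather than closing into a cycle, so that the telescoping produces a monomial rather than a polynomial relation whose roots would need to be extracted. The non-oriented gluing conventions of Section~4.1 (mixing $X$-blocks with $X^\adj$-blocks and $Y$-blocks with $Y^\adj$-blocks) are what make the bookkeeping work when a local restriction of $\gammar$ or $\gammac$ reverses orientation; in~\cite{GSVple} only the oriented case was treated and only one matrix per path was needed.

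Given the monomial identification, the rest is short. Standard trailing minors of $SL_n$ are log-canonical for the classical Poisson--Lie bracket by~\cite[Ch.~4.3]{GSVb}, and the bracket $\Poi_{r^\pu_\bfGr,r^\pu_\bfGc}$ differs from the classical one only in its Cartan operators $R_0^\bfGr,R_0^\bfGc$, which act on $\h$. Since trailing minors are weight vectors for the left and right Cartan actions, this modification only shifts the scalar coefficient in each log-canonical relation without destroying the log-canonical form. Hence the pullbacks $f_{ij}\circ h$ are log-canonical on the source, and the displayed identity, combined with density of the image of $h$, yields log-canonicity of $F_{\bfGr,\bfGc}$ for $\Poi_{r^{\bfGc},r^{\bfGr}}$. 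That $F_{\bfGr,\bfGc}$ is a coordinate system follows because the substitution $f_{ij}\mapsto f_{ij}\circ h$ is a rationally invertible monomial change of variables (again by aperiodicity), so algebraic independence is inherited from the standard trailing-minor chart, with the omission of $f_{11}$ accounting for the single relation $\det Z=1$.

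The main obstacle is precisely the monomial identification in the second paragraph. One must verify, in every non-oriented configuration and at every horizontal edge of every alternating path, that the designated minor of the glued block matrix matches the correct trailing minor of $U$, tracking dual versus non-dual blocks and orientation-reversing pieces of $\gammar,\gammac$. The combinatorial apparatus of alternating paths together with the $X/X^\adj,\,Y/Y^\adj$ gluing rules of Section~4.1 controls this bookkeeping, and aperiodicity guarantees that the induced telescoping terminates after finitely many steps at a single monomial; any alternating cycle would obstruct this collapse and force a genuine polynomial relation instead.
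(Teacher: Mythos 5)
Your proposal follows the paper's proof essentially verbatim: it reduces the claim to the Poisson property of $h$ (Theorem~\ref{twosidedpoisson}), the monomial identification $f_{ij}\circ h=F_{ij}\cdot\prod F_{k_p,1}\prod F_{1,m_q}$ (Theorem~\ref{bigthroughsmall}), and the known log-canonicity of the standard trailing minors, exactly as the paper does. Your additional remarks on the Cartan parts $R_0$ only shifting the log-canonical coefficients, and on aperiodicity forcing the telescoping to terminate, correctly flesh out what the paper compresses into ``an immediate consequence.''
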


\begin{proof}
Let $F_{ij}(A)$ denote the trailing minor of $A$ whose upper left corner contains the entry 
$a_{ij}$. By Theorem~4.18 in~\cite{GSVb} (see also Theorem~5.2 in~\cite{GSV1}) functions $F_{ij}$ are log-canonical with respect to the standard Sklyanin bracket. The proof of Theorem~\ref{logcanbasis} is an immediate consequence of this fact, Theorem~\ref{twosidedpoisson}, and the following statement.

For an arbitrary pair $(i,j)$, $i\ne j$, consider 
the pair of matrices $\L(i,j)$ and $\L^\adj(i,j)$. We say that an exit point $(k,1)$ of an $X$-block  (or an exit point $(1,m)$ of a $Y$-block) is  {\em subordinate\/} to $(i,j)$
if either it or the exit point $(k^\adj,1)$ of the dual 
$X^\adj$-block (the exit point $(1,m^\adj)$ of the dual $Y^\adj$-block, respectively) belongs to the main diagonal of the matrix $\L(i,j)$ and lies below or to the right of the block $X(i,j)$ 
(or $Y(i,j)$). For example, consider the entry $(3,6)$ that belongs to the block $Y(3,6)$
in the left matrix in Fig.~\ref{fig:bigmatrices3}. The exit points subordinate to $(3,6)$ are $(2,1)$ and $(1,6)$ in the matrix on the right,  since the exit points $(7,1)$ and $(1,3)$ of the corresponding dual blocks lie to the right of $Y(3,6)$. The exit point $(1,4)$ is not subordinate to $(3,6)$.

Let $(k_1,1),\dots,(k_s,1)$ and $(1,m_1),\dots,(1,m_t)$ be all exit points subordinate to $(i,j)$ 
(note that $s-t$ is either $0$ or $\pm1$).

\begin{theorem}
\label{bigthroughsmall}
Let $h$ be the Poisson map defined in Theorem~\ref{twosidedpoisson}, 
and let $f_{ij}^h(U)=f_{ij}\circ h(U)$, then 
\begin{equation}
\label{bts}
 f_{ij}^h(U)=F_{ij}(U)\prod_{p=1}^s F_{k_p,1}(U)\prod_{q=1}^t F_{1,m_q}(U).
\end{equation}
\end{theorem}

\begin{remark} For $i=j$~\eqref{bts} holds trivially as $f_{ii}^h(U)=F_{ii}(U)$.
\end{remark}

\begin{proof}
We start from stating an invariance property of the functions $\ttf_{ij}$ that is a direct generalization of the invariance property (4.11) in~\cite{GSVple}.

\begin{proposition}
\label{invariance}
Let $f=\ttf_{ij}$ for some $(i,j)$, then for any $N_+\in\N_+$ and $N_-\in\N_-$
\[
f(N_+X(\bgammac)^*(N_-),\bgammar(N_+)YN_-)=f(X,Y).
\]
\end{proposition}

\begin{proof} It follows from the construction of the matrices described above that if
an $X$-block is multiplied on the left by $N_+$  then to keep the same value of $f$ the $Y$-block immediately to the left of this $X$-block should be multiplied on the left by $\bgammar(N_+)$.
Similarly, if a $Y$-block is multiplied on the right by $N_-$, the $X$-block immediately above it
should be multiplied on the right by $(\bgammac)^*(N_-)$. 
\end{proof}

\begin{remark}
\label{geninvariance}
In fact, it follows from the proof that one can choose different matrices $N_+$ for different $X$-blocks, and different matrices $N_-$ for different $Y$-blocks.
\end{remark}

Let us apply this Proposition for $N_+=(H^\er\bar V^\er)^{-1}$ and $N_-=(\bar V^\ec H^\ec)^{-1}$, 
then we get
\begin{equation}
\label{canform}
f_{ij}(Z)=\ttf_{ij}(Z,Z)=\ttf_{ij}(H^\er U H^\ec, H^\er U H^\ec)=\ttf_{ij}((\bar V^\er)^{-1}U,U(\bar V^\ec)^{-1}).
\end{equation}

The proof of Theorem~\ref{bigthroughsmall} proceeds by induction on the number of blocks in the submatrix of $\L(i,j)$ that defines the minor $f_{ij}$. If $b=1$, that is, the upper left corner of the above minor lies in the lower right block of 
$\L(i,j)$, then by \eqref{canform} it is either an $X$-block for $X=(\bar V^\er)^{-1}U$ or a $Y$-block for  
$Y=U(\bar V^\ec)^{-1}$. 
In both cases \eqref{bts} holds trivially, since left multiplication by a matrix from $\N_+$ and right multiplication by a matrix from $\N_-$ do not change minors in question. 

For $b>1$, consider the lower right block $B$ of $\L(i,j)$ and assume first that it is an $X$-block. The block Laplace expansion of $f_{ij}$ by the last block column involves minors of $X$ in the first $c$ columns, where $c$ is the width of 
$B$. Clearly, such minors are not affected by multiplication of $X$ on the right by a matrix in $\N_+$. Let
$[k,m]$ be the upper $X$-run of $B$ that contains its exit point $(k_s,1)$ (so that $c=n-k_s+1$). 
Consider once again the Gauss decomposition $U=U_-U_{0,+}$ and refactor $U_-$ as follows. Let $s_{[p,q]}$ stand for the product of reflections $s_ps_{p+1}\dots s_q$ for $p<q$ and $s_ps_{p-1}\dots s_q$ for $p>q$. Besides, let $\N_-(r)$ stand
for the subset of matrices in $\N_-$ such that the $r\times r$ submatrix in the lower left corner is upper triangular.
The factor $U_-$ corresponds to the reduced expression $s_{[1,n-1]}s_{[1,n-2]}\dots s_{[1,2]}s_1$ for the longest permutation 
$w_0$. First, we use 2-moves to rewrite it as 
\[
(s_{[1,m-1]}s_{[1,m-2]}\dots s_{[1,2]}s_1)(s_{[m,1]}s_{[m+1,1]}\dots s_{[n-1,1]}). 
\]
Next, we replace the left reduced expression above by its opposite 
\[
s_{[m-1,1]}s_{[m-1,2]}\dots s_{[m-1,m-2]}s_{m-1}. 
\]
Finally, we use 2-moves to rewrite it as
\[ 
(s_{[m-1,k]}s_{[m-1,k+1]}\dots s_{[m-1,m-2]}s_{m-1})(s_{[k-1,m-1]}s_{[k-2,m-1]}\dots s_{[1,m-1]}).
\]
The corresponding factorization of $U_-$ is $U_-=U_-^{[k,m]}U_LU_R$ with $U_-^{[k,m]}\in\N_-^{[k,m]}$, 
$U_L\in\N_-^{[1,m]}(m-k+1)$, and $U_R^{-1}\in\N_-(m)$ (the latter inclusion can be observed from the fact that the corresponding reduced word is $s_{[1,n-1]}s_{[1,n-2]}\dots s_{[1,m]}$). Consequently, we get $U=U_-^{[k,m]}U_LU_RU_{0,+}$, which can be further refactored as $U=U_-^{[k,m]}U_LU'_{0,+}U'_R$ with $U'_{0,+}\in\B_+$ and $(U'_R)^{-1}\in\N_-(n-m)$.

Recall that $(\bar V^\er)^{-1}$ has a block-diagonal structure, and its blocks correspond to $X$-runs defined by $\bfGr$. The block that corresponds to the $X$-run $[k,m]$ is $(U_-^{[k,m]}w_0^{[k,m]})_+^{-1}$. Note that 
\begin{multline*}
(U_-^{[k,m]}w_0^{[k,m]})_+^{-1}U_-^{[k,m]}=(U_-^{[k,m]}w_0^{[k,m]})_+^{-1}(U_-^{[k,m]}w_0^{[k,m]})w_0^{[k,m]}=\\
(U_-^{[k,m]}w_0^{[k,m]})_{0,-}w_0^{[k,m]},
\end{multline*}
and hence
\[
X=(\bar V^\er)^{-1}U=\hat V(U_-^{[k,m]}w_0^{[k,m]})_{0,-}w_0^{[k,m]}U_LU'_{0,+}U'_R.
\]

Note that $\hat V$ retains all blocks of $(\bar V^\er)^{-1}$ except for the one corresponding to $[k,m]$, so that
\[
\hat V(U_-^{[k,m]}w_0^{[k,m]})_{0,-}w_0^{[k,m]}=\begin{bmatrix} \hat V_1 & 0 & 0\\
                                                                   0 & \hat V_2 & 0\\
																                                    0 & 0 &\hat V_3
\end{bmatrix}
\]																
with $\hat V_1$ upper triangular of size $(k-1)\times(k-1)$, $\hat V_2$ is lower anti-triangular of size 
$(m-k+1)\times(m-k+1)$ and $\hat V_3$ is upper triangular of size $(n-m)\times(n-m)$. Further, 
\[
U_L=\begin{bmatrix} U_L^{11} & 0 & 0 \\
                    U_L^{21} & U_L^{22} & 0\\
											0 & 0 & \one
											\end{bmatrix}
\]
where $U_L^{11}$ is of size $(k-1)\times(m-k+1)$, $U_L^{22}$ is of size $(m-k+1)\times(m-k+1)$, and $U_L^{21}$ is upper triangular of size $(m-k+1)\times(m-k+1)$ since $U_L\in\N_-^{[1.m]}(m-k+1)$. Consequently
\begin{equation}
\label{firsthalf}
\hat V(U_-^{[k,m]}w_0^{[k,m]})_{0,-}w_0^{[k,m]}U_L=\begin{bmatrix} \star & \star & 0 \\
                                                                 \hat U_-^{21} & \star & 0\\
											                                              0 & 0 & \hat U_-^{33}
											\end{bmatrix}
\end{equation}
where $\hat U_-^{21}$ is lower anti-triangular of size $(m-k+1)\times(m-k+1)$, $\hat U_-^{33}$ is upper triangular of size $(n-m)\times(n-m)$, and shapes of all submatrices denoted by $\star$ are not relevant for this discussion. 

To proceed further, we need the following technical statement.

\begin{lemma}
\label{shapes}
Let $M$ be an $n\times n$ matrix such that $M^{-1}\in \N_-(r)$. Write $M$ as $M=\begin{bmatrix} M_{11} & M_{12}\\
M_{21}& M_{22}\end{bmatrix}$ where $M_{12}$ is of size $r\times r$, then $C(M)=M_{12}-M_{11}M_{21}^{-1}M_{22}$ is upper triangular.
\end{lemma} 

\begin{proof} Indeed, write $M^{-1}$ as $M^{-1}=\begin{bmatrix} \tilde M_{11} & 0\\
\tilde M_{21}& \tilde M_{22}\end{bmatrix}$ where $\tilde M_{21}$ is of size $r\times r$, and hence upper triangular. Then 
$M_{11}\tilde M_{11}+M_{12}\tilde M_{21}=\one$ and $M_{21}\tilde M_{11}+M_{22}\tilde M_{21}=0$, so that
$(M_{12}-M_{11}M_{21}^{-1}M_{22})\tilde M_{21}=\one$, and the claim follows.
\end{proof} 

We apply this Lemma to the matrix $M=U_R'$ with $r=m$, and get that $C(U_R')$ is an $m\times m$ upper triangular matrix. 
Recall that multiplying $X$ on the right by a matrix in $\N_+$ does not affect $f_{ij}$; we thus multiply $X$ by  
$K=\begin{bmatrix}\one & -M_{21}^{-1}M_{12}\\ 0&\one\end{bmatrix}$. Note that $U_R'K=\begin{bmatrix}\star & C(U_R')\\
\star& 0\end{bmatrix}$; multiplication by $U_{0,+}'$ on the left does not change the shape of the result, that is, the upper right $m\times m$ submatrix remains upper triangular.  Comparing this with \eqref{firsthalf} yields
\[
XK=\begin{bmatrix} \star&\star&\star\\
                   \star& \tilde X&\star\\
									\star & 0 & 0
\end{bmatrix}
\]
where $\tilde X$ is an $(m-k+1)\times (m-k+1)$ lower anti-triangular matrix and the submatrix in the lower left corner is of size $(n-m)\times(n-m)$.	Consequently, any minor of $XK$ in the first $c$ columns and rows $R\cup[m+1,n]$ for
$R\subset[k,m]$, $|R|=c-n+m=m-k_s+1$, vanishes unless $R=[k_s,m]$. The corresponding minor is exactly 
$F_{k_s,1}(XK)=F_{k_s,1}(X)=F_{k_s,1}(U)$.In the Laplace expansion for $f_{ij}$ by the last block column it us multiplied by the minor $f'_{ij}$ similar to $f_{ij}$. It has $b-1$ blocks: the block $B$ is deleted and the previous $Y$- or $Y^\adj$-block is truncated by deletion of the last $m-k_s+1$ rows. All the exit points subordinate to $(i,j)$ remain the same except for $(k_s,1)$ that disappears. So, by induction  \eqref{bts} holds for $f'_{ij}$ with $s-1$ factors in the first prodcut, hence it holds  for $f_{ij}\circ h(U)=f'_{ij}\circ h(U)\cdot F_{k_s,1}(U)$.								

Assume now that the lower right block $B$ of $\L(i,j)$ is an $X^\adj$-block. In this case the Laplace expansion by the last block column involves minors of $X^\adj$. By the Jacobi's complementary minor formula for the minors of the adjugate matrix, 
\begin{equation}
\label{jacobi}
|X_I^J|=|(X^\adj)_{\overline{w_0I}}^{\overline{w_0J}}|,
\end{equation}
 where bar stands for the complement and $w_0$ moves each index $p$ to $n-p+1$. The sign $(-1)^{\Sigma I+\Sigma J}$ in the Jacobi's formula is compensated by the conjugation by the signature matrix $\J$. Let $[m^\adj,k^\adj]$ be the upper $X^\adj$-run of $B$. The minors involved in the Laplace expansion lie in the first $c^\adj=n-k_s^\adj+1$ columns and in rows $R^\adj\cup[k^\adj+1,n]$ for $R^\adj\subset[m^\adj,k^\adj]$ and  $|R^\adj|=k^\adj-k_s^\adj+1$. By \eqref{jacobi} such minors correspond bijectively to the minors of the $X$-block studied above since $|R|+|R^\adj|=m-k+1=k^\adj-m^\adj+1$. Consequently, all of them vanish except for the one that corresponds to $R^\adj=[k_s^\adj,k^\adj]$, which is equal to $F_{k_s,1}(U)$.

The case when the  lower right block $B$ of $\L(i,j)$ is a $Y$-block is treated similarly to the case of and $X$-block. In this case $U_+$ is refactored and $Y$ is multiplied from the left by a lower triangular matrix $K'$ so that 
\[
K'Y=\begin{bmatrix} \star&\star&\star\\
                   \star& \tilde Y&0\\
									\star & \star & 0
\end{bmatrix}
\]
where $\tilde Y$ is a lower anti-triangular matrix whose size is equal to the size of the leftmost $Y$-run of $B$, so that the only non-vanihing minor of $B$ involved in the Laplace expansion by the last block row is $F_{1,m_t}(U)$. The case when $B$ is a $Y^\adj$-block is treated via the Jacobi's complementary minor formula exactly as above. 

\end{proof}
\end{proof}

Note that the double product in the right hand side of \eqref{bts} that defines the ratio 
$f_{ij}^h(U)/F_{ij}(U)$
depends only on the difference $i-j$; we denote it $t_{i-j}(U)$. Consequently, 
\begin{equation}\label{sameratio}
\frac{f_{i+1,j+1}^h(U)}{f_{ij}^h(U)}=\frac{F_{i+1,j+1}(U)}{F_{ij}(U)}
\end{equation}
for $1\le i,j<n$. Further,
\begin{equation}\label{tails}
\begin{aligned}t_{i-n}(U)&=
\begin{cases}
f^h_{(\gammar)^*(i)+1,1}(U)\quad\text{if $i\in\Gamma^\er_2$},\\
1\qquad\qquad\qquad\quad\text{otherwise},
\end{cases}\\
t_{n-j}(U)&=
\begin{cases}
f^h_{1,\gammac(j)+1}(U)\quad\text{if $j\in\Gamma^\ec_1$},\\
1\qquad\qquad\qquad\text{otherwise}.
\end{cases}	
\end{aligned}
\end{equation}		
If $(\gammar)^*$ keeps the orientation of the connected component of $\Gamma^\er_2$ that contains 
$i$, or, respectively, $\gammac$ keeps the orientation of the connected component of 
$\Gamma^\ec_1$ that contains $j$, the above formulas follow immediately from~\eqref{bts}. If the orientation is reversed, it is enough to note that
	by~\eqref{jacobi}, each minor in the product that defines $t_{i-j}(U)$ can be replaced by the corresponding minor of the dual block. 																             

It follows from the proof of Theorem~\ref{bigthroughsmall} that formulas similar to \eqref{bts} are valid for 
certain other minors of matrices  $\L$ and $\L^\adj$ restricted to the diagonal $X=Y=Z$.
 Slightly abusing notation, we will write $\L\circ h(U)$ instead of $\L(h(U),h(U))$, etc.
In particular, let $\L(i,1)$ be of size $N\times N$, 
and let $p$ be such that $\L(i,1)_{pp}=x_{i1}$. Recall that this entry of $\L(i,1)$ belongs to an $X$-block 
$X_{[\alpha,n]}^{[1,\beta]}$ with
$\alpha=(i-1)_-+1$. Similarly, let $\L^\adj(i,1)$ be of size $N^\adj\times N^\adj$, and let $p^\adj$ be such that 
$\L^\adj(i,1)_{p^\adj p^\adj}=x_{i^\adj 1}$, and let $(X^\adj)_{[\alpha^\adj,n]}^{[1,\beta^\adj]}$ be the $X^\adj$-block dual to $X_{[\alpha,n]}^{[1,\beta]}$. 

\begin{proposition}\label{perturminors}
{\rm (i)} Let $I\subset[\alpha,n]$ be an arbitrary subset of size $n-i+1$, then
\begin{equation}\label{primalpertur}
\det\L(i,1)^{[p,N]}_{(I-i+p)\cup[n-i+1+p,N]}\circ h(U)=\det h^\er(U)_{I}^{[1,n-i+1]}\cdot t_{i-1}(U),
\end{equation}
where $I+\gamma$ denotes the shift of $I$ by $\gamma$.

{\rm (ii)} Let $I^\adj\subset[\alpha^\adj,n]$ be an arbitrary subset of size $n-i^\adj+1$, then
\begin{equation}\label{dualpertur}
\det\L^\adj(i,1)^{[p^\adj,N^\adj]}_{(I^\adj-i^\adj+p^\adj)\cup[n-i^\adj+1+p^\adj,N^\adj]}\circ h(U)=
\det (h^\er(U)^\adj)_{I^\adj}^{[1,n-i^\adj+1]}\cdot t_{i-1}(U).
\end{equation}
\end{proposition}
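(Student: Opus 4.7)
The plan is to extend the inductive Laplace-expansion argument used for Theorem~\ref{bigthroughsmall} to the one-parameter family of minors of $\L(i,1)$ obtained by varying the row subset $I$ at the top $X$-block. The key point is that the refactorization of $U_-$ (or $U_+$) that isolates the unique surviving term when expanding along the last block of the glued matrix depends only on the last block and its immediate neighbor, so it is insensitive to the perturbation $I$ at the top.

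First I would use \eqref{canform} (together with Remark~\ref{geninvariance} if a block-wise choice is needed) to rewrite the glued-matrix minor with $X$-blocks evaluated at $(\bar V^\er)^{-1}U$ and $Y$-blocks at $U(\bar V^\ec)^{-1}$. Then I would induct on the number $b$ of blocks, stripping off the lower-right block $B$ in each step. If $B$ is an $X$-block, the refactorization $U_-=U_-^{[k,m]}U_LU_R$ together with right multiplication of $X$ by the appropriate $K\in\N_+$, as in the proof of Theorem~\ref{bigthroughsmall}, reveals the block-triangular shape of $XK$ that kills all but one term in the Laplace expansion along the last block column and contributes the factor $F_{k_s,1}(U)$. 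If $B$ is a $Y$-block, the symmetric refactorization of $U_+$ and left multiplication of $Y$ by an analogous $K'\in\N_-$ gives the factor $F_{1,m_t}(U)$. The dual cases $B=X^\adj$ or $Y^\adj$ reduce to the primal ones via the Jacobi complementary minor formula \eqref{jacobi}. In each case, the remaining minor is the perturbed minor for the glued matrix with $b-1$ blocks, so the inductive hypothesis applies. For the base case $b=1$ the glued matrix must be square, which forces $\alpha=i$ and $\beta=n-i+1$, and the absence of path extensions implies $n-i+1\notin\Gamma_1^\ec$, so $[1,n-i+1]$ is a union of full column $X$-runs with respect to $\bfGc$. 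Consequently $H^\ec_{[1,n-i+1],[1,n-i+1]}$ is lower unitriangular with unit determinant, right multiplication by $H^\ec$ leaves the minor unchanged, and one reads off $\det h(U)_I^{[1,n-i+1]}=\det h^\er(U)_I^{[1,n-i+1]}$. Part (ii) then follows from (i) by another application of \eqref{jacobi}, which matches perturbed minors of $X^\adj$-blocks in $\L^\adj(i,1)$ bijectively with perturbed minors of $X$-blocks in $\L(i,1)$ of the complementary shape.

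I expect the main obstacle to be verifying that the block-triangular cancellation isolating the unique surviving Laplace term persists when the top block's row set is replaced by an arbitrary $I\subset[\alpha,n]$ rather than the leading interval $[i,n]$ used in Theorem~\ref{bigthroughsmall}. The resolution should be that the shape-of-$XK$ computation from that proof is carried out entirely on rows below the top block and depends only on how the last two blocks are glued along the alternating path, so the cancellations go through verbatim and the perturbation at the top simply replaces the leading minor $\det h^\er(U)_{[i,n]}^{[1,n-i+1]}=F_{i,1}(U)$ in the final answer by $\det h^\er(U)_I^{[1,n-i+1]}$.
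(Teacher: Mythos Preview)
Your approach is correct and is precisely what the paper intends: the proposition is stated there without a separate proof, accompanied only by the remark that it ``follows from the proof of Theorem~\ref{bigthroughsmall}.'' Your plan to rerun the Laplace-expansion induction, observing that the refactorization and cancellation at the bottom block are insensitive to the choice of row set $I$ at the top block, is exactly that derivation spelled out.

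Two small points. First, your claim that $b=1$ forces $\alpha=i$ is not correct: $\alpha=(i-1)_-+1$ can be strictly less than $i$ when $i-1\in\Gamma_1^\er$, and nothing in the base case prevents this. Fortunately you never use $\alpha=i$; the base-case computation only needs $\beta=n-i+1$ and the block-diagonality of $H^\ec$ along column $X$-runs, both of which you justify correctly. Second, your reduction of part~(ii) to part~(i) ``by another application of~\eqref{jacobi}'' is slightly imprecise as stated: there is no single Jacobi identity relating a minor of the glued matrix $\L^\adj(i,1)$ to one of $\L(i,1)$. What works is to run the same induction directly on $\L^\adj(i,1)$, where at each inductive step the bottom block is the dual of the corresponding block in $\L(i,1)$ and Jacobi converts its minors to the primal ones already analyzed; the surviving factor is again $F_{k_s,1}(U)$ or $F_{1,m_t}(U)$, so the same $t_{i-1}(U)$ accumulates. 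Your parenthetical phrase about a block-by-block bijection suggests you have this in mind.
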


Note that~\eqref{primalpertur} for $I=[i,n]$ coincides with~\eqref{bts} for $j=1$. There are similar formulas for 
the minors of $\L(1,j)$ and $\L^\adj(1,j)$, but we will not reproduce them here.

\begin{remark}
\label{mostgeneral}
Formulas~\eqref{primalpertur} and~\eqref{dualpertur} can be generalized even further. They remain valid if one replaces the top block in the left hand side with the corresponding block of an 
arbitrary $n\times n$ matrix $A$ and $h^\er(U)$ in the right hand side with $AH^\ec(U)^{-1}$.
\end{remark}

\subsection{The quiver} The goal of this Section is to describe the quiver $Q_{\bfGr,\bfGc}$ and to prove that the seed 
$\Sigma=(F_{\bfGr,\bfGc},Q_{\bfGr,\bfGc})$ defines a cluster structure 
$\CC_{\bfGr,\bfGc}$ compatible with a Poisson bracket $\Poi_{r^\bfGr,r^\bfGc}$. This provides a generalization of Theorem~
3.19 in~\cite{GSVple} that dealt only with oriented BD data. Moreover, the proof is much simpler that the one 
in~\cite{GSVple}. It is based on Theorem~\ref{bigthroughsmall} and avoids complicated calculations.

The quiver has $n^2-1$ vertices labeled $(i,j)$. The function attached to a vertex $(i,j)$ is $f_{ij}$. It is convenient to
describe the quiver with an additional  dummy frozen vertex $(1,1)$ that corresponds to $f_{11}=|X|=1$.  In fact,
the latter quiver corresponds to the cluster structure on $GL_n$ defined by $(\bfGr,\bfGc)$.

\begin{figure}[ht]
\begin{center}
\includegraphics[height=7cm]{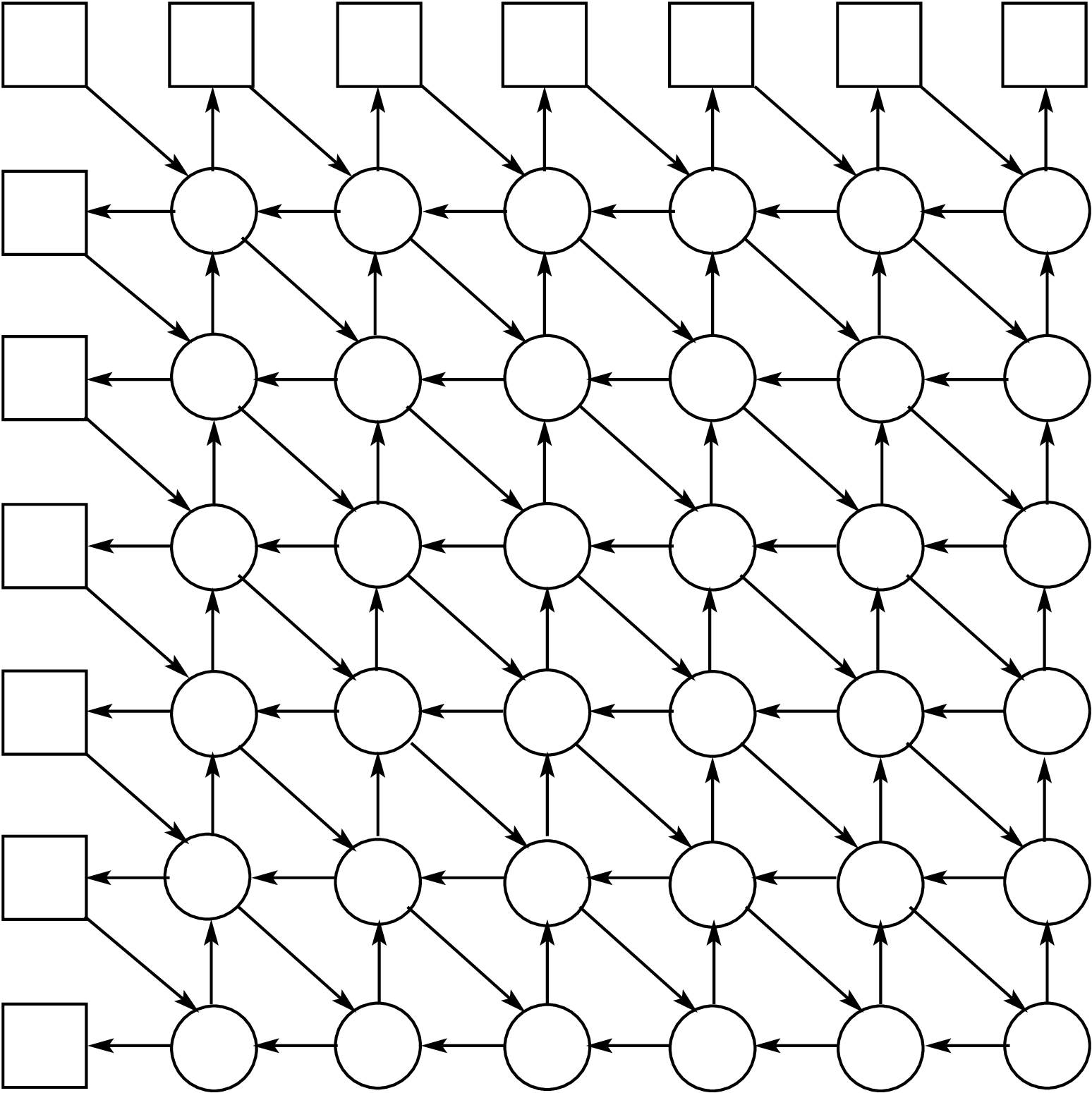}
\caption{Quiver $Q_{\varnothing,\varnothing}$ for $SL_7$}
\label{fig:quiver00}
\end{center}
\end{figure}

Recall first how looks the quiver $Q_{\varnothing,\varnothing}$. This quiver corresponds to the
standard cluster structure built for the open double Bruhat cell in~\cite{BFZ} and extended to the whole group in~\cite{GSVM}.
All vertices in the first row and column are frozen, all other vertices are mutable. The quiver $Q_{\varnothing,\varnothing}$ for $SL_7$ is presented in Fig.~\ref{fig:quiver00}.

The quiver $Q_{\bfGr,\bfGc}$ is obtained from $Q_{\varnothing,\varnothing}$ in the following way. 
For every row $X$-run $[k,m]$, the vertex $(k,1)$ remains frozen, and all other vertices $(k+1,1),\dots,(m,1)$ become mutable. If $\gammar$ preserves the orientation of the connected component $[k,m-1]\in\Gamma^\er_1$ then the following two paths are added:
$(m,1)\to(m-1,1)\to\dots\to(k,1)$ and $(\gammar(k),n)\to(k+1,1)\to(\gammar(k+1),n)\to(k+2,1)\to\dots\to(\gammar(m-1),n)\to(m,1)\to(\gammar(m-1)+1,n)$.
If $\gammar$ reverses the orientation of the connected component $[k,m-1]$ then the following two paths are added: $(k+1,1)\to(k,1)$ and $(\gammar(m-1),n)\to(m,1)\to(\gammar(m-2),n)\to(m-1,1)\to\dots\to(\gammar(k),n)\to(k+1,1)\to(\gammar(k)+1,n)$. 

Similarly, for every column $Y$-run $[p,q]$, the vertex $(1,p)$ remains frozen, and all other vertices
$(1,p+1),\dots,(1,q)$ become mutable. If $(\gammac)^*$ preserves the orientation of the connected component 
$[p, q-1]\in \Gamma^\ec_2$ that corresponds to the run $[p,q]$ then the following two paths are added:
$(1,q)\to(1,q-1)\to\dots\to(1,p)$ and
$(n,(\gammac)^*(p))\to(1,p+1)\to(n,(\gammac)^*(p+1))\to(1,p+2)\to\dots\to(n,(\gammac)^*(q-1))\to(1,q)\to
(n,(\gammac)^*(q-1)+1)$.  
If $(\gammac)^*$ reverses the orientation of the connected component $[p,q-1]$ then the following two paths are added: 
$(1,p+1)\to(1,p)$ and
$(n,(\gammac)^*(q-1))\to(1,q)\to(n,(\gammac)^*(q-2))\to(1,q-1)\to\dots\to(n,(\gammac)^*(p))\to(1,p+1)\to(n,(\gammac)^*(p)+1)$. 

\begin{figure}[ht]
\begin{center}
\includegraphics[height=9cm]{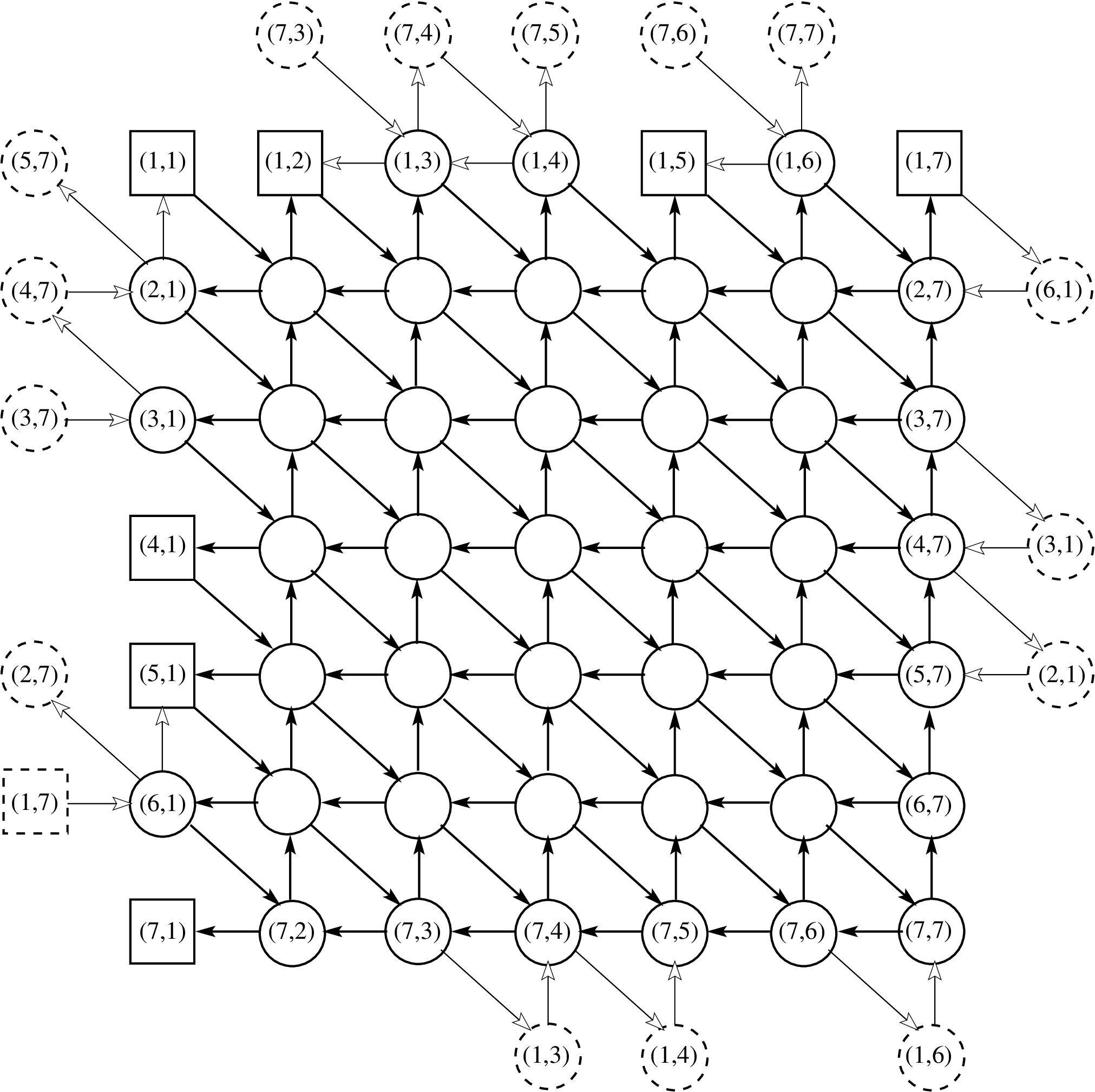}
\caption{Quiver $Q_{\bfGr,\bfGc}$ for the BD graph in Fig.~\ref{fig:bdgraph}}
\label{fig:quiver77}
\end{center}
\end{figure}

Consider our running example. As explained above, $Y$-runs defined by $\gammac$ are $[1,1]$, $[2,4]$, $[5,6]$, and $[7,7]$.
Consequently, vertices $(1,1)$, $(1,2)$, $(1,5)$, and $(1,7)$ remain frozen and vertices $(1,3)$, $(1,4)$, and $(1,6)$ 
become mutable. Further, $(\gammac)^*$ preserves the orientation of all connected components, hence the following paths are added: $(1,4)\to(1,3)\to(1,2)$ and $(7,3)\to(1,3)\to(7,4)\to(1,4)\to(7,5)$ for the component $[2,3]$ and $(1,6)\to(1,5)$ and
$(7,6)\to(1,6)\to(7,7)$ for the component $[5,5]$. Similarly, $X$-runs defined by $\gammar$ are $[1,3]$, $[4,4]$, $[5,6]$,
and $[7,7]$. Consequently, vertices $(4,1)$, $(5,1)$, and $(7,1)$ remain frozen and vertices $(2,1)$, $(3,1)$, and $(6,1)$
become mutable. Further, $\gammar$ reverses the orientation of the connected component $[1,2]$ and (trivially) preserves
the orientation of the connected component $[5,5]$, hence the following paths are added: $(2,1)\to(1,1)$ and 
$(3,7)\to(3,1)\to(4,7)\to(2,1)\to(5,7)$ for the component $[1,2]$ and $(6,1)\to(5,1)$ and $(1,7)\to(6,1)\to(2,7)$ for the componant $[5,5]$. The resulting graph is presented in Fig.~\ref{fig:quiver77}. Vertices shown as dotted circles are copies of the existing vertices and are placed to make the figure easier to comprehend. The edges of additional paths are shown by paler arrows.

\begin{theorem}\label{quiver}
Let $(\bfGr,\bfGc)$ be an aperiodic pair of BD triples, 
then the seed $(F_{\bfGr,\bfGc}, Q_{\bfGr,\bfGc})$ defines a cluster structure compatible with
the Poisson bracket $\Poi_{r^\bfGc,r^\bfGr}$ on 
$SL_n$ for any pair of R-matrices $r^\bfGc$, $r^\bfGr$ from the BD classes defined by 
$\bfGc$, $\bfGr$, respectively. 
\end{theorem}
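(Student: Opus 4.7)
The plan is to transfer the compatibility of the known standard cluster structure $(\{F_{ij}\},Q_{\varnothing,\varnothing})$ across the rational Poisson map $h:\G_{r_\bfGr^\pu,r_\bfGc^\pu}\to\G_{r^\bfGr,r^\bfGc}$ of Theorem~\ref{twosidedpoisson}. The first task is to note that, under the aperiodicity hypothesis, $h$ is a birational Poisson isomorphism, so pullback by $h^{-1}$ sends log-canonical families on the source to log-canonical families on the target; the rational inverse has been constructed earlier in Section~4 (this is precisely where aperiodicity enters).

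Next, I would use formula~\eqref{bts} of Theorem~\ref{bigthroughsmall} to write $f_{ij}\circ h=F_{ij}\cdot\prod_p F_{k_p,1}\prod_q F_{1,m_q}$, so that the integer exponent matrix $T$ expressing $\{f_{ij}\circ h\}$ in terms of $\{F_{\alpha\beta}\}$ is unimodular (indeed essentially triangular, with unit diagonal). From this I extract two consequences: first, algebraic independence of $F_{\bfGr,\bfGc}$, which together with a count of functions validates the underlying seed; second, log-canonicity of $\{f_{ij}\circ h\}$ with respect to $\Poi_{r_\bfGr^\pu,r_\bfGc^\pu}$, which follows because log-canonicity of a family is preserved under Laurent-monomial substitutions and the family $\{F_{\alpha\beta}\}$ is log-canonical for the standard Sklyanin-type bracket (Theorem~4.18 of~\cite{GSVb}; the choice of Cartan part only affects the entries of the coefficient matrix, not log-canonicity itself). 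Applying the Poisson property of $h$, the family $\{f_{ij}\}$ inherits log-canonicity for $\Poi_{r^\bfGc,r^\bfGr}$ on a dense open subset of $SL_n$, and hence everywhere.

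It remains to match the coefficient matrix $\Omega^{\bfGr,\bfGc}=T\Omega^{\varnothing}T^\top$ of log-canonical relations among the $\{f_{ij}\circ h\}$ with the exchange matrix $B^{\bfGr,\bfGc}$ read from $Q_{\bfGr,\bfGc}$, showing that their product is diagonal on the mutable part. Since the standard pair $(B^\varnothing,\Omega^\varnothing)$ is compatible, the question reduces to a purely combinatorial comparison: how the row-operations on $B^\varnothing$ implicit in passing from the frozen set of $Q_{\varnothing,\varnothing}$ to the smaller frozen set of $Q_{\bfGr,\bfGc}$ match the modification $\Omega^\varnothing\mapsto T\Omega^\varnothing T^\top$ dictated by the subordinate exit-point data of~\eqref{bts} and~\eqref{tails}. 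The main obstacle is precisely this case analysis: for each nontrivial row $X$-run $[k,m]$ and column $Y$-run $[p,q]$ one must check that the prescribed new arrows in $Q_{\bfGr,\bfGc}$ (which differ according to whether $\gammar$ or $\gammac$ preserves or reverses the orientation of the corresponding connected component of $\Gamma^\er_1$ or $\Gamma^\ec_2$) produce exactly the off-diagonal cancellations introduced by $T$. The orientation-reversing case is the subtlest: there, the relevant factors in~\eqref{bts} come from dual $X^\adj$- and $Y^\adj$-blocks via~\eqref{jacobi}, and this is what forces the alternative path patterns in the quiver recipe. Once this combinatorial check is carried out run by run and path by path along the alternating decomposition of $\BD_{\bfGr,\bfGc}$, compatibility of $(F_{\bfGr,\bfGc},Q_{\bfGr,\bfGc})$ with $\Poi_{r^\bfGc,r^\bfGr}$ follows.
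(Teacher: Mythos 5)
Your overall strategy---pull the standard seed back through $h$, use~\eqref{bts} to get a unitriangular monomial transformation $T$, and deduce log-canonicity of $F_{\bfGr,\bfGc}$---is exactly how the paper proceeds up to and including Theorem~\ref{logcanbasis}. The paper then phrases compatibility via the $y$-variable criterion~\eqref{compcondition} rather than via $B\Omega=(D\ 0)$; these are equivalent, and your matrix formulation is a legitimate alternative. The benefit of the $y$-variable formulation is that the interior case $i,j\in[2,n]$ collapses to the single identity $y_{ij}\circ h=Y_{ij}$ (Lemma~\ref{bigythroughsmall}), after which the known compatibility of the standard seed finishes those vertices in one line.

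The genuine gap is in your last step. You assert that, because $(B^\varnothing,\Omega^\varnothing)$ is compatible, what remains is ``a purely combinatorial comparison'' of the new arrows against the conjugation $\Omega^\varnothing\mapsto T\Omega^\varnothing T^\top$. This fails for precisely the vertices where the theorem has content: the vertices $(i,1)$ and $(1,j)$ inside nontrivial runs, which are \emph{frozen} in $Q_{\varnothing,\varnothing}$ and become \emph{mutable} in $Q_{\bfGr,\bfGc}$. For these vertices $B^\varnothing$ has no row at all, so the standard compatibility gives no information and there is nothing to reduce to. Moreover, the required identity there is not combinatorial: the $y$-variable at such a vertex involves minors such as $F_{\gammar(i-1),n}$ sitting far from $(i,1)$, and the verification that its bracket with each $\bar F_{\ii\jj}+\bar t_{\ii-\jj}$ is $\delta_{(i,1),(\ii,\jj)}$ requires computing the Cartan contribution of the bracket explicitly. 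In the paper this is the bulk of the proof: one splits the bracket as $\Poi_0$ plus the correction $\Delta$ of~\eqref{detdif}, and the cancellation along the rows $\ii=i$, $\ii=\gammar(i-1)+1$ and the diagonals $\jj-\ii=n-\gammar(i-1)$, $\jj-\ii=n-i+1$ works only because the Cartan part satisfies $S^\bfGr(1-\gammar)h_\alpha=\tfrac12(1+\gammar)h_\alpha$ (and dually for $\bfGc$), i.e.\ because the $r$-matrices were chosen with matching $R_0$. Your proposal never invokes this relation, and without it the claimed diagonality of $B^{\bfGr,\bfGc}\,\Omega^{\bfGr,\bfGc}$ at the new mutable vertices simply does not follow. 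The orientation-reversal subtlety you flag is real but secondary; the missing ingredient is the analytic computation tying the quiver's new arrows to the structure of $R_0^\bfG$.
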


\begin{proof}
The proof is based on the characterization of pairs of compatible Poisson and cluster structures given in~\cite{GSV1} and on Theorems~\ref{twosidedpoisson} and~\ref{bigthroughsmall} above.

Recall the definition of cluster $y$-variables associated with a seed $(\FF=(F_v)_{v\in Q},Q)$ (see \cite{GSV1, CA}): 
for any mutable $v\in Q$
\begin{equation}\label{yvariable}
y_v=\frac{\prod\limits_{v\to u}F_u}{\prod\limits_{w\to v}F_w},
\end{equation}
where $\to$ means an arrow in the quiver $Q$.

For $i,j\in[2,n]$, let $y_{ij}$ and $Y_{ij}$ be the $y$-variables that correspond to the vertex 
$(i,j)$ in the seeds $(F_{\bfGr,\bfGc},Q_{\bfGr,\bfGc})$ and 
$(F_{\varnothing,\varnothing},Q_{\varnothing,\varnothing})$, respectively; recall that 
$F_{\varnothing,\varnothing}=(F_{ij})_{i,j=1}^n$.

\begin{lemma}\label{bigythroughsmall}
For any $i,j\in[2,n]$,
\begin{equation}\label{byts}
y_{ij}^h(U)=Y_{ij}(U),
\end{equation}
where $y_{ij}^h(U)=y_{ij}\circ h(U)$.
\end{lemma}

\begin{proof} For $i,j\in[2,n-1]$ the neighborhoods of the vertex $(i,j)$ in $Q_{\bfGr,\bfGc}$
and $Q_{\varnothing,\varnothing}$ are identical, and
\[
y_{ij}^h(U)=\frac{f_{i+1,j+1}^h(U)}{f_{i-1,j-1}^h(U)}
\cdot\frac{f_{i-1,j}^h(U)}{f_{i,j+1}^h(U)}\cdot
\frac{f_{i,j-1}^h(U)}{f_{i+1,j}^h(U)}=Y_{ij}(U)
\]
by~\eqref{bts} and~\eqref{sameratio}.

For $j\in[2,n-1]$ the neighborhoods of the vertex $(n,j)$ in $Q_{\bfGr,\bfGc}$
and $Q_{\varnothing,\varnothing}$ are identical unless $j$ or $j-1$ belongs to 
$\Gamma_1^\ec$, in which case the former contains one or two additional vertices.
No matter which case occurs, we can use~\eqref{sameratio} and the second formula in~\eqref{tails} to write
\[
\begin{aligned}
y_{nj}^h(U)&=\frac{t_{n-j}(U)}{f_{n-1,j-1}^h(U)}
\cdot\frac{f_{n-1,j}^h(U)}{f_{n,j+1}^h(U)}\cdot
\frac{f_{n,j-1}^h(U)}{t_{n-j+1}(U)}\\
&=\frac1{F_{n-1,j-1}(U)}\cdot
\frac{F_{n-1,j}(U)}{F_{n,j+1}(U)}\cdot F_{n,j-1}(U)=Y_{nj}(U).
\end{aligned}
\]
A similar argument based on the first formula in~\eqref{tails} applies in the case of 
the vertex $(i,n)$, $i\in[2,n-1]$. To treat the vertex $(n,n)$ we use both arguments.
\end{proof}

 For brevity, in what follows we write $\Poi$ for 
$\Poi_{r^\pu_{\bfGc},r^\pu_{\bfGr}}$ and $\Poi_\bfG$ for $\Poi_{r^\bfGc,r^\bfGr}$. 
By Theorem 4.5 in~\cite{GSVb}, to prove 
Theorem \ref{quiver} it suffices to check relation
\begin{equation}
\label{compcondition}
\{\bar y_{ij},\bar f_{\ii\jj}\}=
\sum_{(u,v)\xrightarrow[\Gamma]{}(i,j)}\{\bar f_{uv},\bar f_{\ii\jj}\}_\bfG-
\sum_{(i,j)\xrightarrow[\Gamma]{}(u,v)}\{\bar f_{uv},\bar f_{\ii\jj}\}_\bfG=
\begin{cases} \lambda \enspace&\text{for $(\ii,\jj)=(i,j)$,}\\
0 \enspace & \text{otherwise}
\end{cases}							           
\end{equation}
for all pairs $(i,j), (\ii,\jj)$ such that $f_{ij}$ is not frozen, where $\lambda\ne0$ is fixed, $\xrightarrow[\Gamma]{}$ is an arrow in $Q_{\bfGr,\bfGc}$, and the bar over a function
stands for the logarithm of this function. 

 For $i,j\in[2,n]$, Theorems~\ref{twosidedpoisson} and~\ref{bigthroughsmall} together
with Lemma~\ref{bigythroughsmall} imply
\[
\{\bar y_{ij},\bar f_{\ii\jj}\}_\bfG=\{\bar Y_{ij},\bar F_{\ii\jj}+\bar t_{\ii-\jj}\}
=\{\bar Y_{ij},\bar F_{\ii\jj}\}=
\begin{cases} 1& \quad\text{for $(i,j)=(\ii,\jj)$},\cr
0& \quad\text{otherwise}.
\end{cases}
\]
Here the second equality uses the fact that $t_{\ii-\jj}(U)$ is a product of frozen variables for the standard cluster structure defined by the seed $(F_{\varnothing,\varnothing},Q_{\varnothing,\varnothing})$, and therefore haz a zero Poisson bracket with $Y_{ij}(U)$. The third equality follows from the compatibility of the log-canonical basis $(F_{ij})$ with the standard cluster structure,
see the proof of Theorem 4.18 in~\cite[p.98]{GSVb}. Note that the bracket 
in this theorem has the opposite sign, which is compensated by the opposite direction of the quiver, see~\cite[p.32]{GSVb}.

Consider now the case $1<i<n$, $j=1$. Assume first that $i-1\in\Gamma_1^\er$, $i\notin\Gamma_1^\er$, and $\gammar$ preserves the orientation of the connected component of $\Gamma_1^\er$ that contains $i-1$. In this subcase the index set in the first sum in~\eqref{compcondition} consists fo the vertices $(\gammar(i-1),n)$ and $(i,2)$, and the index set of the second sum consists of the vertices $(i+1,2)$, 
$(i-1,1)$, and $(\gammar(i-1)+1,n)$. Further, $\bar t_{\gammar(i-1)-n}=\bar F_{i1}+\bar t_{i-1}$, and 
$\bar t_{\gammar(i-1)+1-n}=0$. Consequently, the left hand side of~\eqref{compcondition} boils down to
\[
\left\{\bar F_{i1}-\bar F_{i-1,1}+\bar F_{i2}-\bar F_{i+1,2}+\bar F_{\gammar(i-1),n}-\bar F_{\gammar(i-1)+1,n},
\bar F_{\ii\jj}+\bar t_{\ii-\jj}\right\}\!=\!\{\Phi,\bar F_{\ii\jj}+\bar t_{\ii-\jj}\}.
\] 

Assume now that $i-1,i\in\Gamma_1^\er$, and $\gammar$ preserves the orientation of the connected component of $\Gamma_1^\er$ that contains $i-1$, so that $\gammar(i-1)+1=\gammar(i)$. In this subcase the vertex $(i+1,1)$ is added to the 
index set in the first sum in~\eqref{compcondition}.  Further, condition
$\bar t_{\gammar(i-1)+1-n}=0$ is replaced by $\bar t_{\gammar(i)-n}=\bar F_{i+1,1}+\bar t_{i}$. 
Consequently, the left hand side of~\eqref{compcondition} is given by the same expression 
$\{\Phi,\bar F_{\ii\jj}+\bar t_{\ii-\jj}\}$ as before.

Assume next that $i-1\in\Gamma_1^\er$, $i-2\notin\Gamma_1^\er$ and $\gammar$ reverses the orientation of the connected component of $\Gamma_1^\er$ that contains $i-1$. In this subcase the index sets for both sums in~\eqref{compcondition} are the same as in the first subcase, and the tails $t, \bar t$ satisfy the same conditions. 
Consequently, the left hand side of~\eqref{compcondition} is given by the same expression 
$\{\Phi,\bar F_{\ii\jj}+\bar t_{\ii-\jj}\}$ as before.

Finally, assume that $i-2,i-1\in\Gamma_1^\er$ and $\gammar$ reverses the orientation of the connected component of 
$\Gamma_1^\er$ that contains $i-1$, so that $\gammar(i-1)+1=\gammar(i-2)$. In this subcase the vertex $(i-1,1)$ is deleted from the index set in the second sum in~\eqref{compcondition}. Further, condition
$\bar t_{\gammar(i-1)+1-n}=0$ is replaced by $\bar t_{\gammar(i-2)-n}=\bar F_{i-1,1}+\bar t_{i-2}$. 
Consequently, the left hand side of~\eqref{compcondition} is given by the same expression 
$\{\Phi,\bar F_{\ii\jj}+\bar t_{\ii-\jj}\}$ as before.

To evaluate $\{\Phi,\bar F_{\ii\jj}+\bar t_{\ii-\jj}\}$, we start with studying the bracket $\Poi_0=\Poi_{r^\pu_\pu,r^\pu_\pu}$ where  
$r_\pu^\pu$ corresponds to $R^\pu_0=\frac12\pi_\ze$. For an arbitrary $\ii\in[1,n]$ and a subset $I\subset[1,n]$ define
\[
\sign(\ii-I)=\begin{cases} -1 \quad&\text{if $\ii$ is less than the minimal element in $I$,}\\
                          0\quad&\text{if $\ii\in I$,}\\
													1\quad&\text{if the maximal element in $I$ is less than $\ii$;}
				\end{cases}									
\]
otherwise $\sign(\ii-I)$ is not defined.

\begin{lemma}
\label{zerobracket}
If $\sign(\ii-I)$ and $\sign(\jj-J)$ are defined and satisfy the inequality  $|\sign(\ii-I)+\sign(\jj-J)|\leq1$ then
\begin{equation*}
\{\bar u_{\ii\jj},|\bar U_I^J|\}_0=\frac12\left(\sign(\ii-I)+\sign(\jj-J)\right).
\end{equation*}
\end{lemma}

\begin{proof} Follows immediately from \cite[equation (8.21)]{GSVb} and \cite[Lemma 4.7]{GSVb}.
\end{proof}

Further, for an arbitrary pair of functions $f_1, f_2$ we have 
 $\Delta(f_1,f_2)=\{f_1,f_2\}-\{f_1,f_2\}_0=\langle S^{\bfGc}\pi_\ze\nabla^Lf_1,\nabla^Lf_2\rangle-
\langle S^{\bfGr}\pi_\ze\nabla^Rf_1,\nabla^Rf_2\rangle$.
A straightforward computation gives $\Delta(\bar u_{ij},\bar u_{kl})=s^\bfGc_{lj}-s^\bfGr_{ki}$, hence
\begin{equation}
\label{detdif}
\Delta(\overline{\det U_I^J},\overline{\det U_{I'}^{J'}})=
\sum_{i\in J, j\in J'}s^\bfGc_{ij}-\sum_{i\in I,j\in I'}s^\bfGr_{ij}.
\end{equation}

It follows from Lemma~\ref{zerobracket} that
\[
\{\bar F_{i1}-\bar F_{i-1,1},\bar F_{\ii\jj}\}_0=
\begin{cases}
              \ \ \frac12\quad&\text{for $2\le\ii\le i-1$, $n-i+3\le\jj\le n-i+1+\ii$,}\\
							-\frac12 \quad&\text{for $i\le\ii\le n$, $\ii-i+2\le\jj\le n-i+2$,}\\
								\ \ 0\quad&\text{otherwise.}
				\end{cases}									
\]
Further,
\[
\{\bar F_{i2}-\bar F_{i+1,2},\bar F_{\ii\jj}\}_0=
\begin{cases}
              \ \ \frac12\quad&\text{for $i+1\le\ii\le n$, $\ii-i+2\le\jj\le n-i+2$, }\\ 
							\quad&\text{or $(\ii,\jj)=(i,1)$, or $(\ii,\jj)=(1,n-i+2)$,}\\
							-\frac12 \quad&\text{for $3\le\ii\le i$, $n-i+3\le\jj\le n-i+\ii$,}\\
								\ \ 0\quad&\text{otherwise,}
				\end{cases}									
\]
and
\[
\{\bar F_{\gammar(i-1),n}-\bar F_{\gammar(i-1)+1,n}\}_0=
\begin{cases}-\frac12 \quad&\text{for $\ii=\gammar(i-1)+1$}\\
                          \quad&\text{or $1\le\ii\le \gammar(i-1)$, $\jj= n-\gammar(i-1)+\ii$,}\\
							 \ \ 0\quad&\text{otherwise.}
				\end{cases}									
\]
Consequently, $\left\{\Phi,
\bar F_{\ii\jj}\right\}_0$ vanishes if $(\ii,\jj)$ does not belong to the rows $\ii=i$ and $\ii=\gammar(i-1)+1$ or to the diagonals $\jj-\ii=n-\gammar(i-1)$ and $\jj-\ii=n-i+1$. Both rows and the first of the diagonals contribute $-\frac12$, the second diagonal contributes $\frac12$. Therefore, for $i-1>\gammar(i-1)$ the second diagonal intersects the row 
$\ii=\gammar(i-1)+1$ and the contributions cancel at $(\gammar(i-1)+1,\gammar(i-1)+i-n)$, while for $i-1<\gammar(i-1)$ 
the first diagonal intersects the row $\ii=i$ and the contributions at $(i,n+i-\gammar(i-1)$ add to $-1$. Finally, the value of the bracket at $(i,1)$ equals $1/2$.

To compute $\Delta(\Phi,\bar F_{\ii\jj})$ note that the column sets for the minors involved with the positive sign are 
$[1,n-i+1]$, $[2,n-i+2]$, and 
$[n,n]$, while for the minors involved with the negative sign they are $[1,n-i+2]$, $[2,n-i+1]$, and $[n,n]$. Consequently, the contribution of the elements of $S^\bfGc$ in~\eqref{detdif} vanishes. The row sets for the minors involved with the positive sign are
$[i,n]$, $[i,n]$, and $[\gammar(i-1),\gammar(i-1)]$, while for the minors involved with the negative sign they are 
$[i-1,n]$, $[i+1,n]$, and  $[\gammar(i-1)+1,\gammar(i-1)+1]$. It follows from ~\eqref{detdif} that
\[
\Delta(\Phi,\bar F_{\ii\jj})=\sum_l\left(s^\bfGr_{i-1,l}-s^\bfGr_{il}-s^\bfGr_{\gammar(i-1),l}+s^\bfGr_{\gammar(i-1)+1,l}\right)
\]
where $l$ belongs to the row set of the minor that defines $F_{\ii\jj}$. Recall that $S^\bfGr$ is skew symmetric and 
$S^\bfGr(1-\gammar)h_\alpha=\frac12(1+\gammar)h_\alpha$, hence 
\[
 s^\bfGr_{i-1,l}-s^\bfGr_{il}-s^\bfGr_{\gammar(i-1),l}+s^\bfGr_{\gammar(i-1)+1,l}=
\begin{cases}
\ \ \frac12 \quad&\text{for $l=i$ or $l=\gammar(i-1)+1$,}\\
              -\frac12\quad&\text{for $l=i-1$ or $l=\gammar(i-1)$, }\\ 
								\ \ 0\quad&\text{otherwise.}
				\end{cases}
\]
Consequently, $\Delta(\Phi,\bar F_{\ii\jj})$ vanishes if $(\ii,\jj)$ does not belong to the same rows $\ii=i$ and $\ii=\gammar(i-1)+1$ or to the same diagonals $\jj-\ii=n-\gammar(i-1)$ and $\jj-\ii=n-i+1$. This time both rows contribute $\frac12$, and both diagonals contribute $-\frac12$. For $i-1>\gammar(i-1)$ the second diagonal intersects the row 
$\ii=\gammar(i-1)+1$ and the contributions cancel at $(\gammar(i-1)+1,\gammar(i-1)+i-n)$, while for $i-1<\gammar(i-1)$ 
the first diagonal intersects the row $\ii=i$ and the contributions cancel at $(i,n+i-\gammar(i-1)$ add to~$1$. 

Combining this result with the previous computations for the bracket $\{\Phi,\bar F_{\ii\jj}\}_0$ we see that
 $\left\{\Phi,\bar F_{\ii\jj}\right\}$ equals $1$ for 
$(\ii,\jj)=(i,1)$, $-1$ on the diagonal $\jj-\ii=n-\gammar(i-1)$ and vanishes otherwise. Consequently, 
$\left\{\Phi,\bar t_{\ii-\jj}\right\}$ equals $1$ on the diagonal $\jj-\ii=n-\gammar(i-1)$ (those $(\ii,\jj)$ for which $(i,1)$ is subordinate and $(1,n-\gammar(i-1)+1)$ is not subordinate) and vanishes otherwise. Tus, $\{\Phi,\bar F_{\ii\jj}+
\bar t_{\ii-\jj}\}$ equals $1$ for $(\ii,\jj)=(i,1)$ and vanishes otherwise, which completes the verification 
of~\eqref{compcondition} in this case.

The case $i=1$, $1<j<n$ is treated along the same lines. In this case the left hand side in~\eqref{compcondition} boils down to
\[
\left\{\bar F_{1j}-\bar F_{1,j-1}+\bar F_{2j}-\bar F_{2,j+1}+\bar F_{n,(\gammac)^*(j-1)}-\bar F_{n,(\gammac)^*(j-1)+1},
\bar F_{\ii\jj}+\bar t_{\ii-\jj}\right\}.
\] 
The latter is treated in a similar way as above. In this case the contribution of the elements of $S^\bfGr$ vanishes, and the required result follows from $S^\bfGc((\gammac)^*-1)h_\alpha=\frac12(1+(\gammac)^*)h_\alpha$ and the skew symmetry of $S^\bfGc$.

Cases $i=n$, $j=1$ and $i=1$, $j=n$ are treated similarly taking into account $\bar t_{n-1}=\bar F_{1,\gammac(1)+1}+
\bar t_{-\gammac(1)}$ for $1\in\Gamma_1^\ec$ and $\bar t_{1-n}=\bar F_{(\gammar)^*(1)+1,1}+\bar t_{(\gammar)^*(1)}$ for 
$1\in\Gamma_2^\er$.
\end{proof}

\subsection{Regularity} Recall that a cluster structure in the field of rational functions on a quasi-affine variety is called {\it regular\/} if every variable in every cluster is a regular function. By~\cite[Proposition~3.11]{GSVple}, to prove regularity it is enough to exhibit a regular cluster such that all adjacent clusters are regular as well.
The goal of this section is to extend the regularity result of Theorem~6.1 in~\cite{GSVple} to the general case of an aperiodic pair $(\bfGr,\bfGc)$.

\begin{theorem}\label{regneighbour}
For any mutable cluster variable $f_{ij}\in F_{\bfGr,\bfGc}$, the adjacent variable $f_{ij}'$ is a regular function on $SL_n$.
\end{theorem}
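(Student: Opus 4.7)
The plan is to apply Proposition~3.11 of~\cite{GSVple}, so it suffices to verify regularity of $f_{ij}'$ for every mutable vertex $(i,j)$ of the initial seed $(F_{\bfGr,\bfGc},Q_{\bfGr,\bfGc})$. The main idea is to transport the exchange relation at $(i,j)$ through the birational Poisson map $h$ of Theorem~\ref{twosidedpoisson}, use the factorization $f_{ij}^h=F_{ij}\cdot t_{i-j}$ from Theorem~\ref{bigthroughsmall} together with the tail formulas~\eqref{sameratio} and~\eqref{tails}, and thereby reduce each regularity statement to a polynomial identity between minors of an arbitrary $U\in SL_n$, for which regularity is classical.

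Concretely, I would write the exchange relation as $f_{ij}f_{ij}'=M^++M^-$ with $M^\pm$ monomials in the neighbors of $(i,j)$ in $Q_{\bfGr,\bfGc}$, and apply $\circ h$. For an interior vertex $(i,j)$ with $2\le i,j\le n-1$, the quiver $Q_{\bfGr,\bfGc}$ has the same neighbors at $(i,j)$ as $Q_{\varnothing,\varnothing}$, so by~\eqref{sameratio} the tail factors $t_{i\pm 1-j\pm 1}(U)$ cancel from both sides, and the pulled-back identity becomes precisely the standard short exchange relation for the trailing minors $F_{ij}(U)$, which is a Desnanot--Jacobi-type polynomial identity on $\mathrm{Mat}_n$. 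Hence $(f_{ij}')^h$ is expressed as a polynomial in the entries of $U$, and indeed equals the standard adjacent cluster variable $F_{ij}'(U)$ times a product of frozen minors of $U$.

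For a boundary mutable vertex $(i,1)$ with $1<i<n$ (the case $(1,j)$ is symmetric), the quiver $Q_{\bfGr,\bfGc}$ carries additional arrows into vertices in the last column, whose combinatorics depend on whether $\gammar$ preserves or reverses the orientation of the connected component of $\Gamma_1^\er$ containing $i-1$. Using~\eqref{tails} to interpret the extra tail factors as frozen variables of the standard seed, together with Proposition~\ref{perturminors} and Remark~\ref{mostgeneral} to express each ``twisted'' cluster variable along the extra arrows as a solid minor of $U$ (respectively, via the Jacobi complementary minor formula~\eqref{jacobi}, of its adjugate $U^\adj$), the pulled-back exchange relation again acquires Desnanot--Jacobi shape, now applied to the top-left block of $\L(i,1)$ or $\L^\adj(i,1)$. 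This identifies $(f_{ij}')^h$ with a concrete polynomial in the entries of $U$.

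The final step is to descend regularity from $(f_{ij}')^h$ on the source of $h$ to $f_{ij}'$ on $SL_n$ itself. Since $h$ is birational on the aperiodic pair $(\bfGr,\bfGc)$ by the inversion construction of Section~\ref{sectiononesided}, pullback is an isomorphism of rational function fields; the exchange polynomial $M^++M^-$ and the factor $f_{ij}$ already live in $\O(SL_n)$, so divisibility $f_{ij}\mid M^++M^-$ in $\O(SL_n)$ may be checked after pulling back along $h$, and normality of $SL_n$ then upgrades regularity in codimension one to global regularity. I expect the main obstacle to be the boundary orientation-reversing case, where $(M^+)^h$ and $(M^-)^h$ intermix minors of $\L(i,1)$ with minors of its dual $\L^\adj(i,1)$: one must carefully match the vanishing of the two pieces along each irreducible component of $\{f_{ij}=0\}$, and this matching is what the block structure supplied by Proposition~\ref{perturminors} is designed to make transparent.
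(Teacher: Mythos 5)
Your toolkit is the right one, but the strategy breaks at the descent step, and that step cannot be repaired as stated. The map $h$ of Theorem~\ref{twosidedpoisson} is only birational: its inverse, constructed in Section~\ref{sectiononesided}, involves Gauss factorizations and is a rational map, not a morphism. Consequently, for a rational function $\phi=(M^++M^-)/f_{ij}$ on the target copy of $SL_n$, regularity of $\phi\circ h$ on the source does \emph{not} imply regularity of $\phi$ (compare $(u,v)\mapsto(u,uv)$ on $\AA^2$: the non-regular function $z_2/z_1$ pulls back to the regular function $v$). In divisorial terms, the divisibility $f_{ij}\mid M^++M^-$ in $\O(SL_n)$ requires $M^++M^-$ to vanish along \emph{every} component of $\{f_{ij}=0\}$, whereas composing with $h$ only certifies vanishing on the intersection of that hypersurface with the dense open set where $h^{-1}$ is defined and inverts $h$; entire components of $\{f_{ij}=0\}$ may lie in the complement, and there the pulled-back identity gives no information. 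Normality of $SL_n$ does not help, because the missing input is precisely the codimension-one statement.

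This is why the paper's proof does not pull the exchange relation back through $h$ at all. It works directly in the $Z$ coordinates: the neighbors of $(i,j)$ are realized as minors of the block matrices $\L(i-1,j)$, $\L(i-1,j-1)$, $\L(i,j-1)$ (and, in the orientation-reversing boundary case, of the dual matrices $\L^\adj$), and the exchange relation is produced by Desnanot--Jacobi identities applied to suitable square and non-square submatrices of these block matrices --- identities that hold as polynomial identities in $Z$ and hence yield a manifestly regular expression for $f_{ij}'$. The map $h$ enters only through Lemma~\ref{minorcorresp}, an \emph{equality} of two polynomials, $\det M(j,k)=\det M^\adj(j,k)$, which may legitimately be verified after composing with the dominant map $h$ because equality of regular functions is detected on a dense open set --- unlike divisibility. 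To keep your route you would have to show that $h^{-1}$ extends regularly across every component of $\{f_{ij}=0\}$, or verify the vanishing of $M^++M^-$ on the exceptional components by hand; it is easier to convert the problem into identities among minors of $\L(i,j)$ and $\L^\adj(i,j)$ from the start. Note also that your sketch leaves the boundary orientation-reversing case --- the genuinely new content of this theorem relative to~\cite{GSVple} --- essentially untouched; there the paper must combine a Desnanot--Jacobi identity for a $(p+1)\times p$ submatrix of $\L(i,1)$ with one for a square submatrix of $\L^\adj$, matched term by term via Lemma~\ref{minorcorresp}, followed by a further column expansion to factor the resulting relation.
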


\begin{proof}
We start with the following auxiliary statement. Assume that $i-1\in\Gamma_1^\er$ and that $\gammar$ reverses the orientation of the connected component of $\Gamma_1^\er$ that contains $i-1$. Let this component be $[i-1-s,i-1+t]$, $s+t>0$. Consider the pair of dual matrices $\L(i,1)$ and $\L^\adj(i,1)$
restricted to the diagonal $X=Y$. Abusing notation, we denote them by the same symbols $\L(i,1)$ and $\L^\adj(i,1)$. This should not lead to confusion since from now on we will only deal with matrices subject to this restriction.

Denote by $M$ and $M^\adj$ the pair of square trailing submatrices of $\L(i,1)$ and $\L^\adj(i,1)$, respectively, such that the entry in the upper left corner of $M$ is $x_{i1}$, and the entry in the upper left corner of $M^\adj$ is $x^\adj_{i^\adj 1}$; recall that by definition, $f_{i1}=\det M$. Let $r$ denote the size of 
$M$ and $r^\adj$ denote the size of $M^\adj$. Note that the first row of $M$ is an initial segment of the row $X_i$. 
For $1\le j\le s$ and $0\le k\le t-1$ define an $r\times r$ matrix $M(j,k)$ via deleting row $k+1$ from $M$ and adding the corresponding segment of row $X_{i-j}$ on top of the obtained matrix. Similarly, the first row of $M^\adj$ is an initial segment of the row $X^\adj_{i^\adj}$; define an $r^\adj\times r^\adj$ matrix $M^\adj(j,k)$ via adding  the corresponding segment of row $X^\adj_{i^\adj-k-1}$ on top of $M^\adj$ and deleting row $j+1$ of the obtained matrix. 

\begin{lemma} \label{minorcorresp}
For any $1\le j\le s$ and $0\le k\le t-1$
\[
\det M(j,k)=\det M^\adj(j,k).
\]
\end{lemma}

\begin{proof} Define $I(j,k)=(i-j)\cup\left([i,n]\setminus(i+k)\right)$ and 
$I^\adj(j,k)=\left([i^\adj,n]\cup(i^\adj-k-1)\right)\setminus(i^\adj+j-1)$, then 
\[
\begin{aligned}
M(j,k)&=\L(i,1)^{[p,N]}_{(I(j,k)-i+p)\cup[n-i+1+p,N]},\\
M^\adj(j,k)&=\L^\adj(i,1)^{[p^\adj,N^\adj]}_{(I^\adj(j,k)-i^\adj+p^\adj)\cup[n-i^\adj+1+p^\adj,N^\adj]},
\end{aligned}
\]
and hence $\det M(j,k)$ and $\det M^\adj(j,k)$ are 
particular cases of minors studied in Lemma~\ref{perturminors}. Note that $I^\adj(j,k)=\overline{w_0I(j,k)}$, 
so by~\eqref{jacobi} and Lemma~\ref{perturminors} it follows that $\det M(j,k)\circ h(U)=\det M^\adj(j,k)\circ h(U)$. It remains to note that $h$ is invertible, as explained in Section~\ref{sectiononesided}.
\end{proof}

\begin{remark}\label{dualrepr}
(i) The statement of the lemma remains true for $(j,k)=(0,0)$, in which case $I(0,0)=[i,n]$ and $I^\adj(0,0)=[i^\adj,n]$,
so that $M(0,0)=M$ and $M^\adj(0,0)=M^\adj$; the proof goes without any changes.The obtained equality gives an alternative representation $f_{i1}=\det M^\adj$.

 (ii) In fact, the statement of the lemma holds also for the corresponding minors of $\L(X,Y)$ and $\L^\adj(X,Y)$
and can be proved directly by using block-Laplace expansions.
\end{remark}

We can now proceed with the proof of Theorem~\ref{regneighbour}. Assume first that we want to prove the regularity of $f_{ij}'$ for $1<i<n$, $1<j<n$. Recall that the approach suggested in~\cite{GSVple} consists of the following steps. If 
$p=\deg f_{ij}<\deg f_{i-1,j}=m$, we define an $m\times(m+1)$ submatrix $A$ of $\L(i-1,j)$ such that 
$A_{12}=x_{i-1,j}$. Note that 
\begin{equation}\label{typical1}
\begin{aligned} 
&f_{i-1,j}=\det A^{\hat 1}, \qquad f_{i,j+1}=\det A^{\hat 1\hat 2}_{\hat 1}, \\ 
&f_{i-1,j-1}\cdot\det B=\det A^{\widehat {m+1}},\ \ f_{ij}\cdot\det B=\det A^{\hat 1\widehat {m+1}}_{\hat 1}
\end{aligned}
\end{equation}
with $B=A_{[p+2,m]}^{[p+2,m]}$; here and in what follows ``hatted'' subscripts and superscripts indicate deleted rows and columns, respectively.

Applying the Desnanot--Jacobi identity for matrices of size $d\times(d+1)$ we get
\begin{equation}\label{firstdj}
f_{i-1,j}\cdot\det\bar A_{\hat 1}^{\hat 2}+f_{i-1,j-1}f_{i,j+1}=f_{ij}\cdot\det A^{\hat 2}
\end{equation}
where $\bar A=A_{[1,p+1]}^{[1,p+1]}$ has the property $f_{i-1,j-1}=\det \bar A$.

If $p=\deg f_{ij}\ge\deg f_{i-1,j}=m$, we define a $(p+1)\times(p+2)$ matrix $A$ by taking the submatrix of $\L(i-1,j-1)$ 
whose upper left entry equals $x_{i-1,j-1}$ and adding on the right the column $[0,\dots,0,1]^T$. Similarly 
to~\eqref{typical1}, we have
\begin{equation}\label{typical2} 
\begin{aligned}
&f_{i-1,j}\cdot\det B=\det A^{\hat 1}, \ \ f_{i,j+1}\cdot\det B=\det A^{\hat 1\hat 2}_{\hat 1}, \\ 
&f_{i-1,j-1}=\det A^{\widehat {p+2}},\qquad f_{ij}=\det A^{\hat 1\widehat {p+2}}_{\hat 1}
\end{aligned}
\end{equation}
with $B=A_{[m+1,p+1]}^{[m+2,p+2]}$. Applying the same Desnanot--Jacobi identity we arrive at the same 
equation~\eqref{firstdj}, see Section~6.1 in~\cite{GSVple} for more details.

Next, we compare $\deg f_{ij}$ with $\deg f_{i,j-1}$ and consider in a similar way two cases
$\deg f_{ij}<\deg f_{i,j-1}$ and $\deg f_{ij}\ge\deg f_{i,j-1}$, both producing equation
\begin{equation}\label{seconddj}
f_{ij}\cdot\det C_{\hat 1}^{\hat 2}+f_{i+1,j+1}f_{i,j-1}=f_{i+1,j}\cdot\det A^{\hat 2}
\end{equation}
where $C$ is the square submatrix of $\L(i,j-1)$ with the property $f_{i,j-1}=\det C$ and $\bar A$ is the same as 
in~\eqref{firstdj}. The linear combination of~\eqref{firstdj} and~\eqref{seconddj} with coefficients $f_{i+1,j}$ and 
$f_{i-1,j}$, respectively, yields
\[
f_{ij}(f_{i+1,j}\det A^{\hat 2}-f_{i-1,j}\det C^{\hat 2}_{\hat 1})=f_{i-1,j-1}f_{i,j+1}f_{i+1,j}+
f_{i-1,j}f_{i,j-1}f_{i+1,j+1}.
\] 
Combining this with the description of the quiver $Q_{\bfGr,\bfGc}$ given in the previous section we see that
$f_{ij}'=f_{i+1,j}\det A^{\hat 2}-f_{i-1,j}\det C^{\hat 2}_{\hat 1}$ is a regular function. Note that the above 
reasoning does not depend on whether $\gammar$ and $\gammac$ reverse orientation or preserve it.

Consider now $f_{in}'$ for $1\le i\le n$. Assume first that both $i-1$ and $i$ belong to $\Gamma_2^\er$ and that $(\gammar)^*$ preserves the orientation of the corresponding connected component of $\Gamma_2^\er$, that is, 
$(\gammar)^*(i)=(\gammar)^*(i-1)+1$. Then the above reasoning remains valid with $f_{i,j+1}$ 
in~\eqref{typical1}-\eqref{typical2} replaced by $f_{(\gammar)^*(i),1}$ and $f_{i+1,j+1}$ in~\eqref{seconddj} replaced by
$f_{(\gammar)^*(i)+1,1}$. The resulting equation reads
\begin{multline*}
f_{in}(f_{i+1,n}\det A^{\hat 2}-f_{i-1,n}\det C^{\hat 2}_{\hat 1})\\
=f_{i-1,n-1}f_{(\gammar)^*(i),1}f_{i+1,n}+
f_{i-1,n}f_{i,n-1}f_{(\gammar)^*(i)+1,1},
\end{multline*}
and hence $f_{in}'=f_{i+1,n}\det A^{\hat 2}-f_{i-1,n}\det C^{\hat 2}_{\hat 1}$ is a regular function. If $(\gammar)^*$ reverses
the orientation of the connected component of $\Gamma_2^\er$ that contains $i-1$ and $i$ then 
$(\gammar)^*(i-1)=(\gammar)^*(i)+1$. Using the alternative representation of $f_{(\gammar)^*(i-1),1}$ and 
$f_{(\gammar)^*(i-1)+1,1}$ provided by Remark~\ref{dualrepr}, we apply the same reasoning as above with $f_{i,j+1}$ 
in~\eqref{typical1}-\eqref{typical2} replaced by $f_{(\gammar)^*(i-1),1}$ and $f_{i+1,j+1}$ in~\eqref{seconddj} replaced by
$f_{(\gammar)^*(i-1)+1,1}$. The resulting equation reads
\begin{multline*}
f_{in}(f_{i+1,n}\det A^{\hat 2}-f_{i-1,n}\det C^{\hat 2}_{\hat 1})\\
=f_{i-1,n-1}f_{(\gammar)^*(i-1),1}f_{i+1,n}+
f_{i-1,n}f_{i,n-1}f_{(\gammar)^*(i-1)+1,1},
\end{multline*}
which yields the same regular expression for $f_{in}'$. If $i-1\notin\Gamma_2^\er$ then $f_{i,j+1}$ in all formulas above is replaced by~1, which corresponds to a vertex of degree~5. Similarly, if $i\notin\Gamma_2^\er$ then $f_{i+1,j+1}$ in all formulas above is replaced by~1, which again corresponds to a vertex of degree~5. If both conditions hold simultaneously then both functions are replaced by~1, which corresponds to a vertex of degree~4.

Consider now $f_{i1}'$ for $1\le i\le n$. Assume first that both $i-1$ and $i$ belong to $\Gamma_1^\er$ and that $\gammar$ preserves the orientation of the corresponding connected component of $\Gamma_1^\er$, that is, 
$\gammar(i)=\gammar(i-1)+1$. Then the above reasoning remains valid with $f_{i-1,j-1}$ 
in~\eqref{typical1}-\eqref{typical2} replaced by $f_{\gammar(i-1),n}$ and $f_{i,j-1}$ in~\eqref{seconddj} replaced by
$f_{\gammar(i),n}$. The resulting equation reads
\[
f_{i1}(f_{i+1,1}\det A^{\hat 2}-f_{i-1,1}\det C^{\hat 2}_{\hat 1})
=f_{\gammar(i-1),n}f_{i+1,1}f_{i2}+f_{i-1,1}f_{i+1,2}f_{\gammar(i),1},
\]
and hence $f_{i1}'=f_{i+1,1}\det A^{\hat 2}-f_{i-1,1}\det C^{\hat 2}_{\hat 1}$ is a regular function. If $i\notin\Gamma_1^r$ then $f_{\gammar(i),1}$ above is replaced by $f_{\gammar(i-1)+1,1}$ while $f_{i+1,1}$ is replaced 
by~1, which corresponds to a vertex of degree~5.

Consider now the case when both $i-1$ and $i-2$ belong to $\Gamma_1^\er$ and $\gammar$ reverses the orientation of the corresponding connected component of $\Gamma_1^\er$, that is, $\gammar(i-2)=\gammar(i-1)+1$. Assume first that 
$\deg f_{i1}\ge\deg f_{i-1,1}$. Consider the $(p+1)\times p$ trailing submatrix $A$ of $\L(i,1)$ defined by the property $A_{21}=x_{i1}$.
Note that $A_{[1,m]}^{[1,m]}$ for some $m\le p$ is the submatrix of $\L(i-1,1)$ whose determinant equals $f_{i-1,1}$; we denote it $\bar M$ to distinguish it from $M$ that plays the same role for $\L(i,1)$. 
Consequently,
\begin{equation}\label{special1}
\begin{aligned}
&f_{i1}=\det A_{\hat1},\qquad f_{i+1,2}=\det A_{\hat 1\hat 2}^{\hat 1},\\
&f_{i-1,1}\cdot\det B=\det A_{\widehat{p+1}},\ \ f_{i2}\cdot\det B=\det A_{\hat 1\widehat{p+1}}^{\hat 1}
\end{aligned}
\end{equation} 
with $B=A_{[m+1,p]}^{[m+1,p]}$. Applying the Desnanot--Jacobi identity for matrices of size $(d+1)\times d$ we get
\begin{equation}\label{primaldj}
f_{i1}\cdot\det\bar M_{\hat 2}^{\hat 1}+f_{i-1,1}f_{i+1,2}=f_{i2}\cdot\det A_{\hat 2}.
\end{equation}

Next, consider the $(p^\adj+1)\times(p^\adj+1)$ trailing submatrix $A^\adj$ of $\L(\gammar(i-1),n)$ defined by the property
$A^\adj_{11}=x_{\gammar(i-1),n}$. Note that 
$\deg f_{\gammar(i-2),n}=1+\deg f_{i-1,1}\le1+\deg f_{i1}=\deg f_{\gammar(i-1),n}$, and hence 
$(A^\adj)_{[2,m^\adj+2]}^{[1,m^\adj+1]}$ for some $m^\adj$ is the submatrix of $\L(\gammar(i-2),n)$ whose determinant 
equals $f_{\gammar(i-2),n}$. Additionally, $(A^\adj)_{\hat 1}^{\hat 1}$ is exactly the submatrix $M^\adj$ of 
$\L^\adj(i,1)$ defined above, and $\bar M^\adj=(A^\adj)_{[3,m^\adj+2]}^{[2,m^\adj+1]}$ plays the same role for $\L^\adj(i-1,1)$.
Consequently, using the alternative description of $f_{i1}$ and $f_{i-1,1}$ provided by Remark~\ref{dualrepr}, we get
\begin{equation}\label{special2}
\begin{aligned}
&f_{\gammar(i-1),n}=\det A^\adj,\qquad\qquad f_{i1}=\det (A^\adj)_{\hat 1}^{\hat 1},\\
&f_{\gammar(i-2),n}\cdot\det B^\adj=\det (A^\adj)^{\widehat{p^\adj+1}}_{\hat 1},\ \ 
f_{i-1,1}\cdot\det B^\adj=\det (A^\adj)^{\hat 1\widehat{p^\adj+1}}_{\hat 1\hat 2}
\end{aligned}
\end{equation} 
with $B^\adj=(A^\adj)_{[m^\adj+3,p^\adj+1]}^{[m^\adj+2,p^\adj]}$. Applying the Desnanot--Jacobi identity for square matrices we get
\begin{equation}\label{dualdj}
f_{i1}\cdot\det (\bar A^\adj)_{\hat 2}=f_{\gammar(i-1),n}f_{i-1,1}+f_{\gammar(i-2),n}\cdot\det (A^\adj)_{\hat 2}^{\hat 1}
\end{equation}
with $\bar A^\adj=(A^\adj)_{[1,m^\adj+2]}^{[1,m^\adj+1]}$.
The linear combination of~\eqref{primaldj} and~\eqref{dualdj} with coefficients $f_{\gammar(i-1),n}$ and $f_{i+1,2}$, respectively, yields
\begin{multline}\label{longeq}
f_{i1}(f_{\gammar(i-1),n}\cdot\det\bar M_{\hat 2}^{\hat 1}+f_{i+1,2}\cdot\det (\bar A^\adj)_{\hat 2})\\=
f_{i2}f_{\gammar(i-1),n}\cdot\det A_{\hat 2}+f_{i+1,2}f_{\gammar(i-2),n}\cdot\det (A^\adj)_{\hat 2}^{\hat 1}.
\end{multline}

Note that $A_{\hat 2}$ is $M(1,0)$ and $(A^\adj)_{\hat 2}^{\hat 1}$ is $M^\adj(1,0)$ for the dual pair 
$\L(i,1)$, $\L^\adj(i,1)$, 
hence by Lemma~\ref{minorcorresp}  we get $\det A_{\hat 2}=\det (A^\adj)_{\hat 2}^{\hat 1}$, and so the right hand side of~\eqref{longeq} factors.  

Further, expand $f_{\gammar(i-1),n}$ in the left hand side of~\eqref{longeq} by the first column as 
\[
f_{\gammar(i-1),n}=\sum_{j=0}^s (-1)^j x_{\gammar(i-1)+j,n}\det M^\adj(j,0)=
\sum_{j=0}^s (-1)^j x_{\gammar(i-1)+j,n} \det M(j,0)
\]
via Lemma~\ref{minorcorresp}. Similarly, expand $\det (\bar A^\adj)_{\hat 2}$ in the left hand side of~\eqref{longeq} by the first column as 
\begin{multline*}
\det (\bar A^\adj)_{\hat 2}=x_{\gammar(i-1),n}\det\bar M^\adj+
\sum_{j=2}^s (-1)^{j-1} x_{\gammar(i-1)+j,n}\det\bar M^\adj(j-1,1)\\=
x_{\gammar(i-1),n}\det\bar M+\sum_{j=2}^s (-1)^{j-1} x_{\gammar(i-1)+j,n}\det\bar M(j-1,1)
\end{multline*}
via Lemma~\ref{minorcorresp}; note that the exit point for the block  that defines $\bar M$ is $(i-1,1)$, so $\bar M(j-1,1)$ has on top the same segment of row $X_{j-i}$ that $M(j,1)$ does. Substituting into the left hand side of~\eqref{longeq} gives
\begin{multline}\label{expandlong}
f_{\gammar(i-1),n}\cdot\det\bar M_{\hat 2}^{\hat 1}+f_{i+1,2}\cdot\det (\bar A^\adj)_{\hat 2}\\=
x_{\gammar(i-1),n}\left(\det M\det \bar M_{\hat 2}^{\hat 1}+f_{i+1,2}\det\bar M\right)
-x_{\gammar(i-1)+1,n}\det M(1,0)\det\bar M_{\hat 2}^{\hat 1}\\
+\sum_{j=2}^s (-1)^{j} x_{\gammar(i-1)+j,n}\left(\det M(j,0)\det \bar M_{\hat 2}^{\hat 1}-
f_{i+1,2}\det\bar M(j-1,1)\right).
\end{multline}

Consider the coefficient at $x_{\gammar(i-1),n}$ in~\eqref{expandlong}. Recall that by~\eqref{special1},
$M=A_{\hat 1}$, $f_{i+1,2}=\det A_{\hat 1 \hat 2}^{\hat 1}$, $\det\bar M_{\hat 2}^{\hat 1}
\det B=\det A_{\hat 2\widehat{p+1}}^{\hat 1}$, $\det\bar M\det B=\det A_{\widehat{p+1}}$ and 
$\det\bar M_{\hat 1}^{\hat 1}\det B=\det A_{\hat 1\widehat{p+1}}^{\hat 1}$, so that the Desnanot-Jacobi identity 
for the $(p+1)\times p$ matrix $A$ yields
\[
\det M\det \bar M_{\hat 2}^{\hat 1}+f_{i+1,2}\det\bar M=\det M(1,0)\det\bar M_{\hat 1}^{\hat 1}.
\]

To treat the coefficient at $x_{\gammar(i-1)+j,n}$ in~\eqref{expandlong}, consider the $(p+1)\times p$ matrix 
$A(j)$ obtained by adding the initial segment of $X_{i-j}$ on top of $M(1,0)$. Then $M(j,0)=A(j)_{\hat 2}$, 
$f_{i+1,2}=\det A(j)_{\hat 1 \hat 2}^{\hat 1}$, $\det\bar M_{\hat 2}^{\hat 1}\det B=
\det A(j)_{\hat 1\widehat{p+1}}^{\hat 1}$, $\det\bar M(j-1,1)\det B=\det A(j)_{\widehat{p+1}}$ and 
 $\det\bar M(j-1,1)_{\hat 2}^{\hat 1}\det B=\det A(j)_{\hat 2\widehat{p+1}}^{\hat 1}$, so that the Desnanot-Jacobi identity 
for the $(p+1)\times p$ matrix $A(j)$ yields
\[
\det M(j,0)\det \bar M_{\hat 2}^{\hat 1}-f_{i+1,2}\det\bar M(j-1,1)=\det M(1,0)\det\bar M(j-1,1)_{\hat 2}^{\hat 1}.
\]
Substitution of the obtained formulas into~\eqref{longeq} and cancellation of $\det M(1,0)$ in both sides yields
\begin{multline*}
f_{i1}\left(x_{\gammar(i-1),n}\det\bar M_{\hat 1}^{\hat 1}+\sum_{j=1}^s (-1)^{j}x_{\gammar(i-1)+j,n}\det\bar M(j-1,1)_{\hat 2}^{\hat 1}\right)\\
=f_{i2}f_{\gammar(i-1),n}+f_{i+1,2}f_{\gammar(i-2),n},
\end{multline*}
which means that $f_{i1}'$ is a regular function, and the degree of the vertex $(i,1)$ is~4.

If $i-2\notin\Gamma_1^\er$ the above reasoning remains valid with $f_{\gammar(i-2),n}$ replaced by 
$f_{\gammar(i-1)+1,n}f_{i-1,1}$, which yields a vertex of degree~5.

The case when $\deg f_{i1}<\deg f_{i-1,1}$ is treated in a similar way. We consider an $m\times m$ submatrix $A$ of $\L(i-1,1)$ characterized by $A_{11}=x_{i-1,1}$ and an $(m^\adj+1)\times m^\adj$ submatrix $A^\adj$ of $\L^\adj(i-1,1)$ characterized 
by $A^\adj_{21}=x_{\gammar(i-2),n}$. Reasoning along the same lines we arrive at
\begin{multline*}
f_{i1}(f_{\gammar(i-1),n}\cdot\det M_{\hat 2}^{\hat 1}+f_{i+1,2}\cdot \det A^\adj_{\hat 2})\\=
f_{i2}f_{\gammar(i-1),n}\cdot\det\bar A_{\hat 2}+f_{i+1,2}f_{\gammar(i-2),n}\cdot\det (\bar A^\adj)_{\hat 2}^{\hat 1},
\end{multline*}
which coincides with~\eqref{longeq} up to switching $M$ with $\bar M$, etc.

Functions $f_{nj}$ and $f_{1j}$ are treated in a similar way with an analog of Lemma~\ref{minorcorresp}. 
\end{proof}

\subsection{Completeness} Recall that a cluster structure in the ring of regular functions of an algebraic variety is called {\it complete\/} if the corresponding upper cluster 
algebra is naturally isomorphic to this ring.
The goal of this section is to extend the completeness result of Theorem~3.3(ii) in~\cite{GSVple} to the general case of an aperiodic pair $(\bfGr,\bfGc)$. As explained in Section~3.4 of~\cite{GSVple}, this amounts to extending  Theorem 7.1 in~\cite{GSVple} and to claim the following Laurent property.

\begin{theorem}
\label{laurent}
Let  $(\bfGr, \bfGc)$ be an aperiodic pair of Belavin--Drinfeld triples  and $\CC=\CC_{\bfGr, \bfGc}$ be the cluster structure on $SL_n$ defined by the seed $(F_{\bfGr,\bfGc}, Q_{\bfGr,\bfGc})$. 
Then every matrix entry can be written as a Laurent polynomial in the initial cluster 
$F_{\bfGr,\bfGc}$ and in any cluster adjacent to it.
\end{theorem}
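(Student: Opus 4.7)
The plan is to establish the two-seed Laurent property: each matrix entry $u_{ij}$ belongs to $\Z[F_{\bfGr,\bfGc}^{\pm 1}]$ and to $\Z[\mu_v(F_{\bfGr,\bfGc})^{\pm 1}]$ for every mutable vertex $v$. Combined with the regularity established in Theorem \ref{regneighbour}, this shows that every matrix entry lies in the upper cluster algebra, and the standard argument indicated at the start of this subsection then yields completeness.

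For the initial cluster, I would transfer the analogous property from the standard cluster structure $\CC_{\varnothing,\varnothing}$ via the rational Poisson map $h$ of Theorem \ref{twosidedpoisson}. In $\CC_{\varnothing,\varnothing}$ each entry $z_{ab}$ is a Laurent polynomial in $F_{\varnothing,\varnothing}$ by \cite{BFZ}. Aperiodicity guarantees (Section \ref{sectiononesided}) that $h$ has a rational inverse, with $U = \bgammar(\bar Z_+^{-1})Z$ on $\ZZ$ and $\bar Z_+$ rational in $Z$; Proposition \ref{perturminors} (in the generalized form of Remark \ref{mostgeneral}) expresses the minors of $\bar Z_+$ that arise through the functions $f_{ab}\circ h$ together with frozen factors. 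Theorem \ref{bigthroughsmall} then provides the dictionary $F_{ab}(U) = f_{ab}(h(U))/t_{a-b}(U)$, with $t_{a-b}$ a monomial in frozen variables of $F_{\bfGr,\bfGc}$. Substituting these expressions into the Laurent forms of the $z_{ab}$ produces the desired Laurent expressions for $u_{ab}$ in $F_{\bfGr,\bfGc}$.

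For each adjacent cluster, fix a mutable vertex $v = (i,j)$. The proof of Theorem \ref{regneighbour} supplies an explicit exchange identity
\[
f_{ij}\,f_{ij}' \;=\; P^+ + P^-,
\]
where $P^\pm$ are monomials in the remaining cluster and frozen variables. Solving for $f_{ij}$ and substituting into the Laurent expression for $u_{ab}$ produced in the previous paragraph gives a Laurent polynomial in $\mu_v(F_{\bfGr,\bfGc})$, since the only denominators that appear are powers of $f_{ij}'$ and of cluster variables shared with the initial seed.

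The main obstacle lies in the first step: one must keep precise track of the denominators produced when clearing the tails $t_{a-b}$ from the Laurent forms of the $z_{ab}$, ensuring that only variables of $F_{\bfGr,\bfGc}$ appear. Aperiodicity is crucial because it bounds the multiplicity with which each subordinate frozen $F_{k_p,1}$ or $F_{1,m_q}$ contributes to a given tail, preventing uncontrolled denominators. The non-oriented case introduces the further subtlety that the minor realizing $f_{ij}$ may come from $\L(i,j)$ or from its dual $\L^\adj(i,j)$ depending on the BD data, but Lemma \ref{minorcorresp} together with Remark \ref{dualrepr} identifies the two, so the inductive Desnanot--Jacobi computations from \cite[Sect.~7]{GSVple} transfer verbatim once this identification is made.
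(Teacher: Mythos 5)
Your route is genuinely different from the paper's (which proceeds by induction on $|\Gamma_1^\er|+|\Gamma_1^\ec|$, peeling off one root at a time and using Theorem~\ref{inductionstep}), but as written it has two gaps, the first of which is the crux of the whole problem.

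First, transferring the Laurent property through $h$ only controls the wrong matrix. The cluster structure lives on the copy of $SL_n$ with coordinate $Z$, so ``every matrix entry'' means every entry of $Z$, expressed in the variables $f_{ij}(Z)$. Your first step shows that the entries of $U=h^{-1}(Z)$ are Laurent in $\{F_{ij}(U)\}=\{f^h_{ij}(U)/t_{i-j}(U)\}$, hence in $\{f_{ij}(Z)\}$; but $Z=H^\er(U)\,U\,H^\ec(U)$, so to conclude anything about the entries of $Z$ you must also show that the entries of the twist factors $H^\er(U)$ and $H^\ec(U)$ are Laurent in the cluster. That is precisely the hard content of the paper's proof: Theorem~\ref{inductionstep} shows that the \emph{incremental} twist $C=H^\er(U)\tilde H^\er(U)^{-1}$ relating $Z$ to the element $\tilde Z$ for a BD triple with one fewer root has entries that are polynomials divided by a power of the single variable $\tilde f_{\alpha+1,1}(\tilde Z)$ (via Lemmas~\ref{VtV} and~\ref{HtHinv} and the four-case computation of the matrix $K$), and the full twist is assembled as a product of such factors along the induction. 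Your proposal never addresses this, and the ``main obstacle'' you identify (clearing the tails $t_{a-b}$) is not an obstacle at all, since the tails are monomials in frozen variables.

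Second, the argument for adjacent clusters does not work as stated. If $u_{ab}$ is Laurent in the initial cluster and $f_{ij}$ occurs in some monomial with a \emph{negative} exponent, then substituting $f_{ij}=(P^++P^-)/f_{ij}'$ produces $(P^++P^-)$ in the denominator, which is not a Laurent monomial in the adjacent cluster. So Laurentness in $\mu_v(F_{\bfGr,\bfGc})$ cannot be obtained by substitution alone; one needs an independent argument for each adjacent seed. The paper gets this from the induction: mutations avoiding the vertex $v=(\alpha+1,1)$ (which is frozen for the smaller triple) transfer directly by Proposition~7.4 of~\cite{GSVple}, and the one remaining direction is handled by rerunning the induction step at a different root. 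Finally, a smaller point: aperiodicity enters to make the block matrices $\L(i,j)$ and the map $h$ well defined and invertible, not to ``bound multiplicities'' in the tails.
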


\begin{proof}  We will adjust the inductive argument of the corresponding proof in~\cite{GSVple} to allow for non-oriented BD data. In the process, we will 
use Theorem~\ref{bigthroughsmall} and formulas~\eqref{sameratio},~\eqref{tails} to streamline the necessary technical results of~\cite[Section 7.1]{GSVple} even in the oriented case. 

Recall that the induction is on the total size $|\Gamma_1^\er|+|\Gamma_1^\ec|$ of
the pair  $(\bfGr, \bfGc)$. Since each step of induction involves either $\bfGr$ or $\bfGc$, but not both, we will only consider the case of reducing the size of $\bfGr$; the other case can be treated similarly.

The induction step involves removing the first or the last root $\alpha$ of a connected component of $\Gamma_1^\er$, removing its image in $\Gamma_2^\er$, and modifying $\gammar$ accordingly. We denote the BD triple resulting from the operation above by
$\tbfGr=(\tGamma_1^\er,\tGamma_2^\er,\tgammar)$. Below, for any object associated
with the pair $(\bfGr,\bfGc)$, we decorate with $\ \tilde{}\ $ 
the notation for its counterpart associated with $(\tbfGr,\bfGc)$. Since the total size of this pair is smaller, we assume that $\tilde\CC=\CC_{\tbfGr,\bfGc}$ possesses the above mentioned Laurent property. 

Let $F =\{ f_{ij}(Z) \:  i,j\in[1,n]\}$ and $\tilde F=\{ \tilde f_{ij}(Z) \:  i,j\in[1,n]\}$ 
be initial clusters for $\CC$ and $\tilde \CC$, respectively, and $Q$ and $\tilde Q$ be the corresponding quivers. It is easy to see that all maximal alternating paths in 
$\BD_{\bfGr,\bfGc}$ are preserved in $\BD_{\tbfGr,\bfGc}$ except for the path that goes through the directed inclined edge $\alpha\to \gammar(\alpha)$. The latter one is split into two: the initial segment up to the vertex $\alpha$ and the closing segment
starting with the vertex $\gammar(\alpha)$. Consequently, the only difference between
$Q$ and $\tilde Q$ is that the vertex $v=(\alpha+1,1)$ that corresponds to the chosen endpoint of
$[k,m-1]$ is mutable in $Q$ and frozen in $\tilde Q$, and that the neighborhoods of this vertex in $Q$
and $\tilde Q$ are different. This allows to invoke Proposition~7.4 in ~\cite{GSVple}. Namely, 
define 
\[
\lambda_{ij} = \frac{\deg f_{ij}(Z) - \deg\tilde f_{ij}(\tilde Z)}{\deg\tilde f_{\alpha+1,1}(\tilde Z)}
\]
 and choose $\Phi=\{\tilde f_{\alpha+1,1}^{\lambda_{ij}}\tilde f_{ij}\}$  as an  initial cluster associated with $Q$; note that we do not go beyond polynomials since it will be shown below that 
$\lambda_{ij}$ defined as above are integers. Then
 if  $\tilde\varphi$ is obtained via a sequence of mutations avoiding $v$ applied to the seed 
$(\tilde F, \tilde Q)$, then the same sequence of mutations applied to the seed $(\Phi, Q)$ 
yields $\varphi= \tilde f_{\alpha+1,1}^\lambda\tilde\varphi$ for some integer 
$\lambda$.

To implement the induction step, we need the following statement which
is a simultaneous extension of Theorems~7.2
and~7.3 in~\cite{GSVple} to the case of arbitrary aperiodic pairs of Belavin--Drinfeld triples.

\begin{theorem}
\label{inductionstep}
There exists a unipotent upper triangular matrix $C=C(\tilde Z)$ whose entries are rational functions in $\tilde x_{ij}$ with 
denominators equal to powers of $\tilde f_{\alpha+1,1}(\tilde Z)$ such that $Z=C\tilde Z$ and
\[
f_{ij}(Z)=
\begin{cases} \tilde f_{ij}(\tilde Z)\tilde f_{\alpha+1,1}(\tilde Z) & \quad\text{if $(\alpha+1,1)$ is
subordinate to $(i,j)$ for $(\bfGr,\bfGc)$,}\\
\tilde f_{ij}(\tilde Z) & \quad\text{otherwise.}
\end{cases}
\]
\end{theorem}

 It follows from Theorem~\ref{inductionstep} that $\lambda_{ij}$ defined above is equal to $1$ if $(\alpha+1,1)$ is subordinate to $(i,j)$ for $(\bfGr,\bfGc)$, and equal to $0$ otherwise. Since, additionally, $\tilde f_{\alpha+1,1}(\tilde Z) =  f_{\alpha+1,1}(Z)$, we conclude that any Laurent polynomial in $\tilde F$ is also a Laurent polynomial in $F$, and any  Laurent polynomial in variables of the cluster in $\tilde\CC$ obtained by mutation of $\tilde F$ in a direction {\em other than $v=(\alpha+1,1)$} is also a Laurent polynomial in the cluster in $\CC$ obtained by mutation of $F$ in the same direction. By inductive assumption, every matrix entry 
$\tilde z_{ij}$ can be expressed as a Laurent polynomial in  $\tilde F$ or any cluster adjacent to it.
The first claim of Theorem~\ref{inductionstep} then implies that for any of these clusters except the one obtained by mutation in the direction $v$, the entries of $C=C(\tilde Z)$, and therefore of 
$Z=C\tilde Z$, are Laurent polynomials in the corresponding cluster in $\CC$. To verify the claim of Theorem~\ref{laurent} for the cluster in $\CC$ obtained by mutation of the initial one in the direction $v$, we apply the same induction step to a different root in $\bfGr$ or, if $\bfGr=\{\alpha\}$, apply a similar procedure to a root in $\bfGc$. In the latter case, we use an analogue of Theorem \ref{inductionstep} that can be easily obtained by transposition. The case $|\Gamma_1^\er|+|\Gamma_1^\ec|=1$ serves as the base of induction; it was handled in Section~7.3 of~\cite{GSVple}. Thus, to complete the proof of Theorem~\ref{laurent} we only need to finish 

\begin{proof}[Proof of Theorem~\ref{inductionstep}]
First, we compare functions $f_{ij}(Z)$ and $\tilde f_{ij}(\tilde Z)$ in the initial seeds of the two cluster structures using formula~\eqref{bts}. The pair of indices $(i,j)$, $i\ne j$, defines uniquely a directed horizontal edge $e(i,j)=(n-i+j)\to(i-j)$ in the upper part of the BD graph for $i>j$ and a directed horizontal edge $e(i,j)=(n+i-j)\to(j-i)$ in the lower part of the BD graph for $i<j$.  
Note that despite the functions themselves depend on the whole BD graph, the right hand side of this formula can be read off directly from the maximal alternating path through $e(i,j)$ and does not depend on the rest of the graph. Indeed, each factor in the right hand side of~\eqref{bts} corresponds to a minor of $U$ defined by a directed horizontal edge preceding the edge $e(i,j)$ in the alternating path. Further, the exit points of all such blocks are subordinate to $(i,j)$  
for both $(\bfGr,\bfGc)$ and $(\tbfGr,\bfGc)$. As an immediate consequence, we conclude that if maximal alternating paths that correspond to $(i,j)$ in $G$ and $\tilde G$ coincide up to and including $e(i,j)$ then $f_{ij}^h(U)=
\tilde f_{ij}^{\tilde h}(U)$, which immediately yields
\begin{equation}
\label{fafter}
f_{ij}(Z)=\tilde f_{ij}(\tilde Z),
\end{equation}
since $\tilde Z=\tilde h\circ h^{-1}(Z)$.

It is easy to see that all maximal alternating paths in $G$ are preserved in $\tilde G$ except for the path $P$ that goes through the directed inclined edge $\alpha\to \gammar(\alpha)$. The latter one is split into two: the initial segment up to the vertex $\alpha$ and the closing segment
starting with the vertex $\gammar(\alpha)$. Using~\eqref{bts} and the reasoning above, we conclude that if  the inclined edge $\alpha\to\gammar(\alpha)$ precedes $e(i,j)$ in $P$ then
\begin{equation}
\label{fbefore}
f_{ij}(Z)=\tilde f_{ij}(\tilde Z)\tilde f_{\alpha+1, 1}(\tilde Z),
\end{equation}
since the horizontal edge in $P$ that immediately preceeds $\alpha\to \gammar(\alpha)$ is
$(n-\alpha)\to\alpha$, which corresponds to $f_{\alpha+1,1}$. Note that in this case the exit point
$(\alpha+1,1)$ is subordinate to $(i,j)$ for $(\bfGr,\bfGc)$.

Recall that by~\eqref{XthruU}, $Z=H^\er(U)UH^\ec(U)$ and $\tilde Z=\tilde H^\er(U)U\tilde H^\ec(U)$.
Additionally, $\tilde H^\ec(U)=H^\ec(U)$ since $\bfGc$ is the same in both cases. Consequently,
\begin{equation}
\label{XthrutX}
Z=H^\er(U)\tilde H^\er(U)^{-1}\tilde Z=C\tilde Z.
\end{equation}
To complete the proof of Theorem~\ref{inductionstep} we have to check that the entries of the matrix 
$C$ as functions in $\tilde Z$ obtained via $U=\tilde h^{-1}(\tilde Z)$ are rational with denominators equal to powers of $\tilde f_{\alpha+1,1}(\tilde Z)$.  

Assume that $[k,m-1]$ is the connected component of $\Gamma^\er_1$ to which the induction step is being applied; denote $p=m-k+1$. Define the subgroup $\KK\subset\N_+^{\Gamma_1^\er}$ via
\begin{equation}
\label{KK}
\KK=
\begin{dcases}
\left\{\diag\left(\one_{k-1},\begin{bmatrix}\one_{p-1} & \xi^T \cr 0 & 1\end{bmatrix},\one_{n-m}\right)\right\} 
\quad&\text{for $\alpha=m-1$,}\\
\left\{\diag\left(\one_{k-1},\begin{bmatrix} 1 & \xi \cr 0 & \one_{p-1}\end{bmatrix},\one_{n-m}\right)\right\} 
\quad&\text{for $\alpha=k$,}
\end{dcases}
\end{equation}
where $\xi=(\xi_1,\dots,\xi_{p-1})$. Every $p\times p$ unipotent upper triangular matrix $A$ can be uniquely factored in every one of the following four ways:
\begin{equation}
\label{fourfactor}
\begin{aligned}
A &= \begin{bmatrix}A_1 & 0 \cr 0 & 1\end{bmatrix} \begin{bmatrix}\one_{p-1} & \xi_1^T \cr 0 & 1\end{bmatrix} =
\begin{bmatrix}\one_{p-1} & \xi_2^T \cr 0 & 1\end{bmatrix} \begin{bmatrix}A_1 & 0 \cr 0 & 1\end{bmatrix} \\
&=
\begin{bmatrix}1 & 0 \cr 0 & A_2\end{bmatrix} \begin{bmatrix} 1 & \xi_3 \cr 0 & \one_{p-1}
\end{bmatrix} =
\begin{bmatrix} 1 & \xi_4 \cr 0 & \one_{p-1}\end{bmatrix} \begin{bmatrix}1 & 0 \cr 0 & A_2\end{bmatrix},
\end{aligned}
\end{equation}
where $A_1, A_2$ are $(p-1)\times (p-1)$ unipotent upper triangular matrices and 
$\xi_1,\dots,\xi_4$ are $(p-1)$-vectors. Consequently, every element  
$V\in\N_+^{\Gamma^\er_1}$ can be uniquely factored as $V=T(V) K_1(V)= 
K_2(V) T(V)$ with $T(V)\in\N_+^{\tilde\Gamma^\er_1}$ and $K_1(V),K_2(V)\in \KK$. 
Recall that $(\bgamma^\er)^*\bgamma^\er$ acts on $\N_+$ as the
projection to $\N_+^{\Gamma_1^\er}$, which allows to define $T(V)$ and $K(V)$
for any $V\in\N_+$ as $T((\bgamma^\er)^*\bgamma^\er(V))$ and $K((\bgamma^\er)^*\bgamma^\er(V))$, respectively.

We start with the following relation between $\bar V^\er$ and $\bar{\tilde V}^\er$, 
as defined in Section~\ref{mainconstruction}. 

\begin{lemma}
\label{VtV}
$T(\bar V^\er)=\bar{\tilde V}^\er$.
\end{lemma}

\begin{proof}
Let us start with comparing $V^\er$ and $\tilde V^\er$. By construction, they are 
block-diagonal matrices with lower unipotent blocks that differ only in the block with the row and column set $\Delta=[k,m]$; let us call it the $\Delta$-block. For $V^\er$, this block coincides with the $\Delta$-block of the factor $U_-^{[k,m]}\in\N_-^{[k,m]}$ in the factorization 
$U_-=U_-^{[k,m]}U_LU_R$, see the proof of Theorem~\ref{bigthroughsmall} above. We denote this 
block $B$; it is a lower unipotent $p\times p$ matrix.

Let $s_{\{k,m-1\}}=s_{[m-2,k]}s_{[m-2,k+1]}\dots s_{m-2}$ be the reduced expression for 
$w_0^{[k,m-1]}$ analogous to one used in the proof of Theorem~\ref{bigthroughsmall}, 
and $s_{\{k+1,m\}}=s_{[m-1,k+1]}\allowbreak s_{[m-1,k+2]}\dots s_{m-1}$ be a similar reduced expression
for $w_0^{[k+1,m]}$. Note that both products $s_{\{k,m-1\}}s_{[m-1,k]}$ and
 $s_{\{k+1,m\}}s_{[k,m-1]}$ constitute reduced expressions for $w_0^{[k,m]}$. The two corresponding factorizations of $B$ are
\begin{equation}\label{Bblock}
B=\begin{dcases}\begin{bmatrix} 
\tilde B & 0\cr
0 & 1 \end{bmatrix} S^{-1}\quad&\text{for $\alpha=m-1$},\\
\begin{bmatrix} 
1 & 0\cr
0 &\tilde B \end{bmatrix} S\quad&\text{for $\alpha=k$},
\end{dcases}
\end{equation}
where $S$ is a lower unipotent bidiagonal $p\times p$ matrix with generically nonzero subdiagonal entries. In both cases $\tilde B$ is the $\tilde\Delta$-block for $\tilde V^\er$.

We can now compare $\bar V^\er$ and $\bar{\tilde V}^\er$. Let $\bar B$ and $\bar{\tilde B}$ be 
the corresponding $\Delta$- and $\tilde\Delta$-blocks. Recall that 
$\bar V^\er=(V^\er w_0^{[1,n]})_+$, so that $\bar B=(Bw_0^{[1,p]})_+$. A straightforward check 
shows that $S^{-1}w_0^{[1,p]}$ can be refactored as
\[
S^{-1}w_0^{[1,p]}=\begin{bmatrix} w_0^{[1,p-1]} & \star\cr 0 & 1\end{bmatrix} T_1
\]
with $T_1\in\B_-$. Consequently,~\eqref{Bblock} for $\alpha=m-1$ yields
\[
\bar B=\left(\begin{bmatrix} \tilde B & 0\cr 0 & 1\end{bmatrix}
\begin{bmatrix} w_0^{[1,p-1]}& \star\cr 0 & 1\end{bmatrix}T_1\right)_+=
\begin{bmatrix} \tilde Bw_0^{[1,p-1]} &\star \cr 0 & 1\end{bmatrix}_+=
\begin{bmatrix} \bar{\tilde B} &\star\cr 0 &1\end{bmatrix}.
\]
Taking into account that all other blocks for $\bar V^\er$ and $\bar{\tilde V}^\er$ are identical, we get the statement of Lemma for $\alpha=m-1$.

Further, a straightforward check shows that $Sw_0^{[1,p]}$ can be refactored as
\[
Sw_0^{[1,p]}=\begin{bmatrix} 1 & \star\cr 0& w_0^{[1,p-1]} \end{bmatrix} T_2
\]
with $T_2\in\B_-$. Consequently,~\eqref{Bblock} for $\alpha=k$ yields
\[
\bar B=\left(\begin{bmatrix} 1 & 0\cr 0& \tilde B \end{bmatrix}
\begin{bmatrix} 1&\star \cr 0 & w_0^{[1,p-1]}\end{bmatrix}T_2\right)_+=
\begin{bmatrix} 1&\star\cr 0 &\tilde Bw_0^{[1,p-1]} \end{bmatrix}_+=
\begin{bmatrix} 1&\star\cr 0 &\bar{\tilde B} \end{bmatrix}.
\]
Once again, all other blocks for $\bar V^\er$ and $\bar{\tilde V}^\er$ are identical, hence 
we get the statement of Lemma for $\alpha=k$.
\end{proof}

Our next step is to make relation~\eqref{XthrutX} 
between $Z$ and $\tilde Z$ more explicit.

\begin{lemma}
\label{HtHinv}
{\rm (i)} There exists $K\in\KK$ such that $\bgammar(C)=C\bgammar(K^{-1})$.

{\rm (ii)} Consequently, $C=\dots (\bgammar)^2(K)\bgammar(K)$.
\end{lemma}

\begin{proof}
(i) As explained above, any $V\in\N_+$ can be factored as $V=T(V)K(V)$. It follows from~\eqref{fourfactor} that 
\begin{equation}
\label{gatga}
\tilde\bgamma^\er(V)=\tilde\bgamma^\er(T(V))=\bgammar(T(V))=\bgammar(VK(V)^{-1}).
\end{equation}

Define $K_0,K_1,\dots\in\KK$ via $K_0=K(V)$, $K_j=K((\tilde\bgamma^\er)^j(V))$ for $j=1,\dots$. 
Then~\eqref{gatga} implies
\begin{multline*}
(\tilde\bgamma^\er)^2\left(T(V)\right)=\tilde\bgamma^\er\left(\bgammar(T(V))\right)=
\bgammar\left(T(\tilde\bgamma^\er(V))\right)=\bgammar\left(\tilde\bgamma^\er(V)K_1^{-1}\right)\\
=\bgammar\left(\bgammar(T(V))K_1^{-1}\right)=(\bgammar)^2(T(V))\bgammar(K_1^{-1})=
(\bgammar)^2(V)(\bgammar)^2(K_0^{-1})\bgammar(K_1^{-1}),
\end{multline*}
and more generally,
\[
(\tilde\bgamma^\er)^j\left(T(V)\right)=(\bgammar)^j(V)(\bgammar)^j(K_0^{-1})
(\bgammar)^{j-1}(K_1^{-1})\dots\bgammar(K_{j-1}^{-1}).
\]
Consequently,
\begin{equation}
\label{gatgaj}
\bgammar\left((\tilde\bgamma^\er)^j(T(V))\right)=(\tilde\bgamma^\er)^{j+1}(T(V))\bgammar(K_j).
\end{equation}

Recall that $\tilde H^\er(U)=\dots (\tilde\bgamma^\er)^2(\bar{\tilde V}^\er)\tilde\bgamma^\er
(\bar{\tilde V}^\er)$ and $\bar{\tilde V}^\er=T(\bar V^\er)$, hence $\tilde H^\er(U)^{-1}=
\tilde\bgamma^\er(T(V))(\tilde\bgamma^\er)^2(T(V))\dots$ for $V=(\bar V^\er)^{-1}$. 
Therefore,~\eqref{gatgaj} yields
\begin{multline*}
\bgammar(\tilde H^\er(U)^{-1})=(\tilde\bgamma^\er)^2(T(V))\bgammar(K_1)(\tilde\bgamma^\er)^3(T(V))
\bgammar(K_2)\dots\\=\left((\tilde\bgamma^\er)^2(T(V))(\tilde\bgamma^\er)^3(T(V))\dots\right)
\bgammar(K')=\tilde\bgamma^\er(\bar{\tilde V}^\er)\tilde H^\er(U)^{-1}\bgammar(K')
\end{multline*}
for some $K'\in\KK$ due to commutation rules~\eqref{fourfactor}. Further, the definition of $H^\er(U)$ in Section~\ref{mainconstruction} immediately yields 
$\bgammar(H^\er(U))=H^\er(U)\bgammar((\bar V^\er )^{-1})$. So, finally,
\[
\begin{aligned}
\bgammar(C)&=H^\er(U)\bgammar((\bar V^\er )^{-1})\tilde\bgamma^\er(\bar{\tilde V}^\er)\tilde H^\er(U)^{-1})\bgammar(K')\\
&=H^\er(U)\bgammar((\bar V^\er )^{-1}(\bar{\tilde V}^\er))\tilde H^\er(U)^{-1})\bgammar(K')\\
&=H^\er(U)\bgammar(K'')\tilde H^\er(U)^{-1})\bgammar(K')=H^\er(U)\tilde H^\er(U)^{-1})\bgammar(K^{-1})
=C\bgammar(K^{-1})
\end{aligned}
\]
for some $K\in\KK$; 
here the equality in the second line follows from~\eqref{gatga}, the first equality in the third line 
follows from Lemma~\ref{VtV} for $K''= (\bar V^\er )^{-1}(\bar{\tilde V}^\er)\in\KK$, and the second equality follows from commutation rules~\eqref{fourfactor}.

(ii) Indeed, by (i),  
$\bgammar(K)=\bgammar(C)^{-1}C$ and hence 
\[
(\bgammar)^j(K)=(\bgammar)^j(C)^{-1}(\bgammar)^{j-1}(C),
\]
so that $\dots(\bgammar)^2(K)\bgammar(K)=C$.
\end{proof}

To complete the proof of Theorem~\ref{inductionstep} we have to find an explicit expression for
the matrix $K$ in Lemma~\ref{HtHinv}, that is, to compute parameters $\xi_1,\dots,\xi_{p-1}$ in~\eqref{KK}. These parameters are determined uniquely via 
equations~\eqref{fafter},~\eqref{fbefore} and
the determinantal description of functions $f_{ij}(Z)$ and $f_{ij}(\tilde Z)$ for a particular
collection of $p-1$ pairs $(i,j)$. There are four cases to consider depending on whether the deleted 
root $\alpha$ is $k$ or $m-1$ and on whether or not $\gammar$ reverses the orientation of $[k,m-1]$. 

{\it Case 1\/:} $\alpha=k$, $\gammar$ preserves the orientation of $[k,m-1]$. 
Let $K=\linebreak\diag(\one_{k-1},\Xi,\one_{n-m})$ with  
$\Xi=\begin{bmatrix} 1 &\xi\cr 0 &\one_{p-1}\end{bmatrix}$. Denote $q=\gammar(k)$ and consider 
$f_{qj}(Z)$ for $j\in[n-p+2,n]$. Recall that $f_{qj}(Z)$ is the determinant of the principal trailing
submatrix $M$ of $\L(q,j)$ such that the entry in the upper left corner of $M$ is $z_{qj}$. It is easy to see that for $j$ as above, the top left block of $M$ is a $Y$-block, and since the orientation is preserved, the block immediately to the right of it is an $X$-block with the exit point $(k+n-j+1,1)$. By Remark~\ref{geninvariance}, this determinant does not change if the first of the above blocks is replaced by
the corresponding block of $\bgammar(N_+)Z$, and the second one by the corresponding blok of 
$N_+Z$. We choose $N_+=C^{-1}$, hence, as mentioned in the proof of Lemma~\ref{HtHinv}(ii), 
$\bgammar(N_+)=\bgammar(K)C^{-1}$. Consequently, by~\eqref{XthrutX}, the matrix in the first block is $\bgammar(K)\tilde Z$, and the matrix in the second block iz $\tilde Z$.

Consider the Laplace expansion of the matrix $M$ amended as explained above with respect to the first block column. In the obtained sum, apply Remark~\ref{mostgeneral} with $A=\tilde Z$ to all second factors and collect all the obtained expressions back to the unexpanded form. We thus obtain
\begin{equation}
\label{longf}
f_{qj}(Z)=\det\left[
\begin{array}{c:c}
\begin{matrix} \Xi\tilde Z_{[q,q+p-1]}^{[j,n]}\cr \zero \end{matrix} & 
(\tilde Z H^\ec(U)^{-1})_{[k,n]}^{[1,j-k]}\end{array}
\right]t_{k+n-j}(U),
\end{equation}  
where $\zero$ is a zero $(n-k-p+1)\times(n-j+1)$ matrix.
In a similar way,
\begin{equation}
\label{longtf}
\tilde f_{qj}(\tilde Z)=\det\left[
\begin{array}{c:c}
\begin{matrix} \tilde Z_{[q,q+p-1]}^{[j,n]}\cr \zero \end{matrix} & 
\begin{matrix} 0\ \dots\ 0\cr (\tilde Z H^\ec(U)^{-1})_{[k+1,n]}^{[1,j-k]}\end{matrix}
\end{array}
\right]\tilde t_{k+n-j}(U);
\end{equation}
note that in this case there is no need to amend the first two blocks of $\tilde M$. 
Further,~\eqref{bts} and~\eqref{fafter},~\eqref{fbefore} yield
\[
t_{k+n-j}(U)=
\begin{cases}
\tilde t_{k+n-j}(U)f_{k+1,1}(\tilde Z)  & \!\!\!\text{if $(k+1,1)$ is subordinate to $(k+n-j+1,1)$}\\
\tilde t_{k+n-j}(U)  &\!\!\!\text{otherwise}.
\end{cases}
\]
Consequently,~\eqref{longf},~\eqref{longtf} and~\eqref{fafter},~\eqref{fbefore}  yield
\[
\det\left[
\begin{array}{c:c}
\begin{matrix} \Xi\tilde Z_{[q,q+p-1]}^{[j,n]}\cr \zero \end{matrix} & 
(\tilde Z H^\ec(U)^{-1})_{[k,n]}^{[1,j-k]}\end{array}
\right]=
\det\left[
\begin{array}{c:c}
\begin{matrix} \tilde Z_{[q,q+p-1]}^{[j,n]}\cr \zero \end{matrix} & 
\begin{matrix} 0\ \dots\ 0\cr (\tilde Z H^\ec(U)^{-1})_{[k+1,n]}^{[1,j-k]}\end{matrix}
\end{array}
\right]
\]
for $j\in[n-p+2,n]$. In other words, let $P=(\tilde Z H^\ec(U)^{-1})_{[k+1,n]}^{[1,n-k]}$ be 
a $(n-k)\times(n-k)$ matrix and $v=(\tilde Z H^\ec(U)^{-1})_k^{[1,n-k]}$ be an $(n-k)$-vector, then all dense $(n-k+1)\times(n-k+1)$ minors of the $(n-k+1)\times(n-k+p-1)$ matrices
\[
\left[\begin{array}{c:c}
\begin{matrix} \Xi\tilde Z_{[q,q+p-1]}^{[n-p+2,n]}\cr \zero \end{matrix} & 
\begin{matrix} v \cr P\end{matrix}
\end{array}\right] 
\qquad\text{and}\qquad
\left[\begin{array}{c:c}
\begin{matrix} \tilde Z_{[q,q+p-1]}^{[n-p+2,n]}\cr \zero \end{matrix} & 
\begin{matrix} 0 \cr P\end{matrix}
\end{array}\right] 
\]
are equal, which gives $p-1$ linear equations for $\xi_1,\dots,\xi_{p-1}$. Define a unipotent upper
triangular $(n-k+1)\times(n-k+1)$ matrix $\Theta$ via $\Theta =\begin{bmatrix} 1 & vP^{-1}\cr
0 & \one_{n-k}\end{bmatrix}$, then multiplying the second matrix above on the left by $\Theta$ we preserve all dense $(n-k+1)\times(n-k+1)$ minors and obtain
\[
\left[\begin{array}{c:c}
\begin{matrix} \Theta_{[1,p]}^{[1,p]}\tilde Z_{[q,q+p-1]}^{[n-p+2,n]}\cr \zero \end{matrix} & 
\begin{matrix} v \cr P \end{matrix}
\end{array}\right].
\]
Consequently, all the minors in question are equal for $\Xi=\Theta_{[1,p]}^{[1,p]}$, which yields
\[
\xi_i=\frac{(-1)^{i-1}\det P^{(i)}}{\det P}, \qquad i\in[1,p-1],
\]
where $P^{(i)}$ is obtained from $P$ via replacing the $i$th row by $v$. Note that this solution remains valid if $\tilde Z_{[q,q+p-1]}^{[n-p+2,n]}$ above is replaced by an arbitrary $p\times(p-1)$ matrix 
$A$. It follows from the lower semicontinuity of the rank function that $\xi_i$ are defined uniquely, since for $A=\begin{bmatrix} \star \cr \one_{p-1}\end{bmatrix}$ the system of equations for $\xi_i$ is triangular and diagonal elements are minors of $P$. Finally, we invoke once again Theorem~\ref{bigthroughsmall} and Remark~\ref{mostgeneral} to conclude that the ratio above is equal to
\[
\frac{(-1)^{i-1}\det\tilde \L^{(i)}(k+1,1)(\tilde Z)}{\det\tilde \L(k+1,1)(\tilde Z)},
\]
where $\tilde \L^{(i)}(k+1,1)(\tilde Z)$ is obtained from $\tilde \L(k+1,1)(\tilde Z)$ via replacing the $i$th row of its upper leftmost block, which is a submatrix of $\tilde Z_{[k+1,n]}$, by the corresponding segment of the row $\tilde Z_k$ (cf.~the construction preceding 
Lemma~\ref{minorcorresp}). Consequently, $\xi_i$ are polynomials in $\tilde Z$ divided by 
$\tilde f_{k+1,1}(\tilde Z)$, as required.

{\it Case 2\/:} $\alpha=m-1$, $\gammar$ preserves the orientation of $[k,m-1]$. Let 
$K=\diag(\one_{k-1},\Xi,\one_{n-m})$ with  
$\Xi=\begin{bmatrix} \one_{p-1} &\xi^T\cr 0 & 1\end{bmatrix}$. Put $q=\gammar(k)$ as before and consider $f_{q+j,n-p+j+1}(Z)$ for $j\in[1,p-1]$. 
Note that $f_{q+j,n-p+j+1}(Z)$ is the determinant of the principal trailing
submatrix $M$ of $\L(m,1)$ such that the entry in the upper left corner of $M$ is $z_{q+j,n-p+j+1}$. It is easy to see that for $j$ as above, the top left block of $M$ is a $Y$-block, same as in the previous case, and since the orientation is preserved, the block immediately to the right of it is an $X$-block with the exit point $(m,1)$. Arguing as in Case 1, we arrive at the equality of the 
first $p-1$ leading minors of the $(n-k+1)\times(n-k+1)$ matrices
\[
\left[\!
\begin{array}{c:c}
\begin{matrix} \Xi\tilde Z_{[q,q+p-1]}^{[n-p+2,n]}\cr \zero \end{matrix} & 
(\tilde Z H^\ec(U)^{-1})_{[k,n]}^{[1,n-m+1]}\end{array}
\!\right]\text{and}
\begin{bmatrix} \tilde Z_{[q,q+p-2]}^{[n-p+2,n]}& \zero \cr
 \zero & (\tilde Z H^\ec(U)^{-1})_{[m,n]}^{[1,n-m+1]}\end{bmatrix},
\]
which yields a triangular system of linear equations on $\xi_1,\dots,\xi_{p-1}$. Write
\[
(\tilde Z H^\ec(U)^{-1})_{[k,n]}^{[1,n-m+1]}=\begin{bmatrix} P'\cr P\end{bmatrix} 
\]
where $P'$ consists of the upper $p-1$ rows, and $P$ is the remaining square submatrix, and 
define a unipotent upper triangular $(n-k+1)\times(n-k+1)$ matrix $\Theta$ via 
$\Theta=\begin{bmatrix} \one_{p-1} & P'P^{-1}\cr \zero & \one_{n-m+2}\end{bmatrix}$.
Multiplication of the second matrix above on the left by $\Theta$ preserves the leading minors and produces
\[
\left[
\begin{array}{c:c}
\begin{matrix} \Xi\tilde Z_{[q,q+p-1]}^{[n-p+2,n]}\cr \zero \end{matrix} & 
(\tilde Z H^\ec(U)^{-1})_{[k,n]}^{[1,n-m+1]}\end{array}
\right].
\]
Consequently, all the minors in question are equal for $\Xi=\Theta_{[1,p]}^{[1,p]}$, which yields
\[
\xi_i=\frac{\det P^{((i))}}{\det P}, \qquad i\in[1,p-1],
\]
where $P^{((i))}$ is obtained from $P$ via replacing its first row by the $i$th row of $P'$. 
The same reasoning as in Case~1 shows that $\xi_i$ are polynomials in $\tilde Z$ divided by
$\tilde f_{m1}(\tilde Z)$, as required.

{\it Cases 3 and 4}. These are the two cases when $\alpha=k$ or $\alpha=m-1$ and $\gammar$ reverses the orientation of $[k,m-1]$. The treatment of these cases is very similar to the treatment described above. In both cases the second block of the matrix $M$ is an $X^\adj$-block, so $\tilde Z H^\ec(U)^{-1}$ in all formulas should be replaced by $(\tilde Z H^\ec(U)^{-1})^\adj$. Further, 
finding $\bgammar(K)$ now involves the conjugation of the cofactor matrix by $w_0\J$. Note that 
conjugation by $w_0\J$ of the cofactor matrix of $\Xi$ used in Case~1  gives $\Xi$ used in 
Case~2. Consequently, the argument of Case~1 should be now used for $\alpha=m-1$, and the argument used in Case~2 should be used for $\alpha=k$.  
\end{proof}

Therefore, the proof of Theorem~\ref{laurent} is completed.
\end{proof}

\end{document}